\documentclass[12pt]{article}
\usepackage{amsmath,amsfonts,amssymb,amsxtra,latexsym,amscd,enumerate,amsthm,verbatim}

\usepackage{graphicx,xcolor}

\usepackage{enumitem}

\usepackage[margin=1.40in]{geometry}
\numberwithin{equation}{section}

\newcommand{\R}{\mathbb{R}}

\newcommand{\Z}{\mathbb{Z}}
\newcommand{\eps}{\epsilon}
\newcommand{\us}{\mathbb{S}^2}
\newcommand{\N}{\mathcal{N}}
\newcommand{\pr}{\mathbb{P}}
\newcommand{\ct}{\cos\theta}
\newcommand{\st}{\sin\theta}
\newcommand{\bV}{{\bf V}}

\newtheorem{theorem}{Theorem}[section]
\newtheorem{lemma}[theorem]{Lemma}
\newtheorem{proposition}[theorem]{Proposition}

\newtheorem{definition}[theorem]{Definition}
\newtheorem{remark}[theorem]{Remark}

\begin{document}
\date{}
\title{Refined asymptotics of the steady Navier Stokes equation around small Landau solutions}

\author{Hao Jia\thanks{School of Mathematics, University of Minnesota. Email: \texttt{jia@umn.edu}. Supported in part by NSF DMS 2245021 and NSF DMS 2453270}\and
Vladim\'ir \v{S}ver\'ak\thanks{School of Mathematics, University of Minnesota. Email: \texttt{sverak@umn.edu}. Supported in part by NSF DMS 2247027}
}

\maketitle

\setcounter{tocdepth}{1}

\abstract{In this paper we study the large distance asymptotics of small steady solutions of the 3d Navier Stokes equation in exterior domains. It was proved by Korolev and the second author \cite{SverakKorolev} that the leading term is given by the Landau solution, and it was conjectured that the next order term should be $O(1/|x|^2)$ as $x\to\infty$. We confirm that this is indeed the case and we compute the next order asymptotics in terms of eigenvalues of a suitably constructed linearized operator around the Landau solution on the unit sphere. While the decay of some of the terms is precisely $O(1/|x|^2)$, the the decay of other terms is slightly accelerated.}

\tableofcontents

\section{Introduction and main results}
Consider the steady Navier-Stokes equation in $\R^3\setminus B_1$, with $B_1=\{x\,,|x|<1\}$:
\begin{equation}\label{main1}
\begin{split}
-\Delta u+u\cdot\nabla u+\nabla p&=0,\\
{\rm div}\,u&=0,
\end{split}
\end{equation}
 where $u:\R^3\backslash B_1\to \R^3, p:\R^3\backslash B_1 \to \R$ are the velocity and pressure fields of the incompressible fluid flow. 
 Equation \eqref{main1} can be solved with suitable boundary conditions on $\partial(\R^3\setminus B_1)$ and at $\infty$. We consider the case $u(x)\to 0$ as $x\to\infty$. For the sake of concreteness, we assume that $u\in \dot{H}^1\cap L^6(\R^3\backslash B_1)$, although stronger decay conditions may be imposed in some cases. For simplicity we will also assume the natural ``no flux to infinity" condition 
 \begin{equation}\label{flux_zero}\int_{\partial B_R} u\cdot n=0
 \end{equation}
 for some (and hence all) $R>1$. This is not strictly necessary\footnote{This is essentially because a suitable multiple of the field $x/|x|^3$ can accommodate the possible non-zero flux, and in the small data situation the non-linearity does not present a serious problem for suitably decomposing the solution.}, but the main interest is in the natural situation~\eqref{flux_zero}, which already illustrates all the essential points.
 The specific boundary condition $u_0=u|_{\partial(\R^3\setminus B_1)}$ will not play an important role in our results (but our assumptions imply that it should be small).

The study of the steady Navier-Stokes equations has a long history, starting with the pioneering work of Leray \cite{leray1933etude} who established, for given boundary data, the existence of steady state solutions with finite Dirichlet energy. Such solutions are also smooth following standard elliptic regularity theory, thanks to the subcritical nature of the steady Navier-Stokes equations in three dimensions. We remark that uniqueness for fixed large boundary conditions is generally not expected, see e.g. \cite{Yud67} and section 2.5 in \cite{tsai2018lectures}. 

In general, determining the large distance asymptotics is
difficult.  
In contrast to the regularity problem, the large distance asymptotics problem is more difficult in lower dimensions (somewhat similarly to scattering for dispersive equations), with the four-dimensional case being ``critical'' (see \cite{jia2018asymptotics}) and considerably easier than the physically significant two- and three-dimensional cases.
 We refer to \cite{amick1988leray,babenko1973stationary,cannone2004smooth,deuring2000asymptotic,farwig2009asymptotic,finn1965exterior,galdi1994introduction,guillod2015steady,korobkov2015solution,landau1944new,leray1933etude,nazarov2000steady,tian1998one}, the books \cite{constantin1988navier,galdi1994introduction,tsai2018lectures} and the recent survey \cite{korobkov2023stationary_ren}
 as well as references therein for some classical results and more recent significant developments in this large area. 
 
 
 One open problem for \eqref{main1} is whether all smooth finite Dirichlet energy solutions $u$ vanishing at $\infty$ satisfy the natural asymptotics
 \begin{equation}\label{Int2}
     |u(x)|=O(1/|x|),\quad {\rm as}\,\,x\to\infty. 
 \end{equation}
 One can also ask about the precise asymptotics as $x\to\infty$ for solutions of~\eqref{main1} that vanish at $\infty$. In general, this seems difficult for large solutions. 

 For small solutions, using perturbative methods, it is not hard to prove \eqref{Int2} under quite general conditions. However, even in this case, there are some interesting open problems concerning the more precise asymptotics of the solution as $x\to\infty$ that we will address in this paper. 

There is a remarkable family of explicit solutions $U^b, P^b$ with $b\in\R^3$ to \eqref{main1}, the Landau solutions, which satisfy in $\R^3$,
\begin{equation}\label{Int3}
    \begin{split}
-\Delta U^b+U^b\cdot\nabla U^b+\nabla P^b&=b\,\delta(x),\\
{\rm div}\,U^b&=0,
\end{split}
\end{equation}
in the sense of distributions. 

The Landau solutions are given as follows. Assume without loss of generality $b=|b| \begin{pmatrix}
    0\\0\\1
\end{pmatrix}$. We use the standard spherical coordinate $(r,\theta, \phi)$ with 
\begin{equation}\label{Int3.1}
    x_1=r\sin\theta \,\cos\phi,\,\,x_2=r\sin\theta\,\sin\phi,\,\,x_3=r\cos\theta,
\end{equation}
and the vector fields
\begin{equation}\label{Int3.10}
\begin{split}
  & e_r:=(\sin\theta \,\cos\phi,\sin\theta\,\sin\phi,\cos\theta),\,\, e_\theta:=(\cos\theta\,\cos\phi,\cos\theta\,\sin\phi,-\sin\theta),\\
 &  e_\phi:=(-\sin\phi,\cos\phi, 0).
\end{split}   
\end{equation}
Write for $x\in\R^3\backslash\{0\}$,
\begin{equation}\label{Int3.2}
    U^b(x):=\frac{1}{|x|}V^b(x/|x|)e_\theta+\frac{1}{|x|}F^b(x/|x|)e_r,
\end{equation}
then we have the formulae
\begin{equation}\label{Int3.3}
\begin{split}
  &  V^b=-\frac{2\epsilon\sin\theta}{1-\epsilon\cos\theta},\quad F^b=2\Big[\frac{1-\epsilon^2}{(1-\epsilon\cos\theta)^2}-1\Big],\\
  &|b|=f(\epsilon):=16\pi\Big[\frac{1}{\epsilon}+\frac{1}{2\epsilon^2}
  \log\frac{1-\epsilon}{1+\epsilon}+\frac{4\epsilon}{3(1-\epsilon^2)}\Big],\quad \epsilon\in(0,1).  
    \end{split}
\end{equation}
Notice that $f$ is increasing and maps $(0,1)$ to $(0,\infty)$ and $f(\epsilon)\approx \epsilon$ for $0<\epsilon\ll1$. The formula \eqref{Int3.3} can be found in \cite{SverakKorolev} and in page 207 of \cite{Batchelor}

Landau solutions are scale invariant under the natural scaling of \eqref{main1} for any $\lambda>0$,
\begin{equation}
    u\to u_\lambda(x):=\lambda u(\lambda x),\quad p\to p_\lambda(x):=\lambda^2p(\lambda x). 
\end{equation}
In particular, Landau solutions $U^b$ are $-1$-homogeneous and satisfy \eqref{Int2}. In \cite{sverak2011landau} it was proved that Landau solutions are the only smooth solutions to the steady Navier Stokes in $\R^3\backslash\{0\}$ which are scale invariant. 

For a smooth solution $u,p$ to \eqref{main1}, define for $i,j\in\{1,2,3\}$ and $|x|>1$,
\begin{equation}\label{Int4}
    T_{ij}(x):=-(\partial_{x_j}u_i+\partial_{x_i}u_j)+u_iu_j+p\,\delta_{ij},
\end{equation}
then simple integration by part argument shows that for any $R_2>R_1>1$,
\begin{equation}\label{Int5}
    b_i:=\int_{|x|=R_1}T_{ij}n_j\,d\sigma=\int_{|x|=R_2}T_{ij}n_j\,d\sigma.
\end{equation}
For solutions to \eqref{main1} satisfying for $|x|>1$,
\begin{equation}\label{Int6}
    |u(x)|\leq c_\ast/|x|,
\end{equation}
with a sufficiently small $c_\ast>0$, it was proved by Korolev and the second author \cite{SverakKorolev} that the leading order asymptotic is given by $U^b$, in the sense that for some $\gamma\in(1,2)$, 
\begin{equation}\label{Int7}
    \big|u(x)-U^b(x)\big|=O(1/|x|^\gamma), \quad x\to\infty,
\end{equation}
where $b\in\R^3$ is given by \eqref{Int4}-\eqref{Int5}. 

It is worthwhile to point out that at the leading order term $\sim O(\frac{1}{|x|})$, the Landau solutions behave differently from solutions to the corresponding linear Stokes equation. Thus the result \cite{SverakKorolev} shows that the nonlinearity has a significant effect on the large distance asymptotic of solutions to \eqref{main1} even for small solutions. We expect that the Landau solutions should play an important role in understanding asymptotic behavior of general solutions to \eqref{main1}, not just the small ones. 

We note that in \eqref{Int7}, $\gamma$ can be chosen arbitrarily close to $2$ but the endpoint $\gamma=2$ is excluded in \cite{SverakKorolev}. In addition, $c_\ast$ depends on $\gamma$ and may shrink to $0$ as $\gamma$ approaches $2$. In \cite{SverakKorolev} it was conjectured that the asymptotic expansion \eqref{Int7} should still hold for $\gamma=2$. However, simple examples in~\cite{SverakKorolev} show that the perturbative methods used there are not sufficient to settle the problem.

In this paper, we confirm this expectation and prove the following theorem. 
\begin{theorem}\label{MTh1}
There exists $\delta_0\in(0,1)$ such that the following statement holds. Suppose that $u,p$ is a smooth solution to \eqref{main1} satisfying for $|x|>1$,
\begin{equation}\label{MTh1.1}
    |u(x)|\leq \delta_0/|x|,
\end{equation}
together with~\eqref{flux_zero}.
Let $b\in\R^3$ be given as in \eqref{Int4}-\eqref{Int5} and $\epsilon$ be defined as in \eqref{Int3.3}. Then we have the following conclusions:

\medskip
\noindent
\quad (I) There exists an eight dimensional space $E_b\subseteq C^\infty(\us,\R^3)$ and subspaces $E_b^1, \,E_b^2$, $E_b^3\subseteq E_b$ depending only on $b$, such that 
\begin{equation}\label{MTh1.2}
    E_b=E_b^1\oplus E_b^2\oplus E_b^3, \quad {\rm dim }\,E^1_b=4, \,\, \quad {\rm dim }\,E^2_b=2, \quad {\rm dim }\,E^3_b=2. 
\end{equation}

\medskip
\noindent
\quad(II) We have the following expansion for $x\in\R^3\backslash B_1$,
\begin{equation}\label{MTh1.3}
\begin{split}
    u(x)=&U^b(x)+\frac{\Phi_1(x/|x|)}{|x|^2}\\
    &+\frac{1}{|x|^{\alpha_2+1}}\Big[\cos(\beta_2\log|x|)\Phi_{2,1}(x/|x|)+\sin(\beta_2\log|x|)\Phi_{2,2}(x/|x|)\Big]\\
    &+\frac{1}{|x|^{\alpha_3+1}}\Big[\cos(\beta_3\log|x|)\Phi_{3,1}(x/|x|)+\sin(\beta_3\log|x|)\Phi_{3,2}(x/|x|)\Big]\\
    &+O(|x|^{-5/2}),
    \end{split}
    \end{equation}
    where the constants $\alpha_2, \beta_2, \alpha_3, \beta_3$ depend only on $b$ and satisfy
    \begin{equation}\label{MTh1.4}
    \begin{split}
        &\alpha_2=1+\frac{1}{15}\epsilon^2+O(\epsilon^3), \quad\beta_2=O(\epsilon^3),\\
        &\alpha_3=1+\frac{4}{15}\epsilon^2+O(\epsilon^3),\quad\beta_3=O(\epsilon^3).
        \end{split}
    \end{equation}
    Moreover, $\Phi_1\in E_b^1$, $\big\{\Phi_{2,1}, \Phi_{2,2}\big\}\subset E_b^2$ and $\big\{\Phi_{3,1}, \Phi_{3,2}\big\}\subset E_b^3$. 
\end{theorem}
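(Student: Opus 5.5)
Write $u=U^b+w$, $p=P^b+q$. Since $u$ and $U^b$ carry the same flux vector $b$ in \eqref{Int5}, the remainder $(w,q)$ solves on $\R^3\setminus B_1$ the linearized system
\[
-\Delta w+U^b\cdot\nabla w+w\cdot\nabla U^b+\nabla q=-w\cdot\nabla w,\qquad {\rm div}\,w=0,
\]
with zero net momentum flux and, by \eqref{flux_zero}, zero mass flux through spheres. From \eqref{Int7} of \cite{SverakKorolev} we know $|w|\lesssim |x|^{-\gamma}$ for some $\gamma$ close to $2$, so the quadratic term is $O(|x|^{-2\gamma-1})$, which is much smaller than the linear terms at the scale $|x|^{-2}$. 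The plan is a Mellin-type analysis. Set $t=\log r$ and write $w=r^{-1}W(t,\sigma)$, $q=r^{-2}Q(t,\sigma)$ with $\sigma\in\us$. Because $U^b$ is $-1$-homogeneous, the linear operator becomes autonomous in $t$: a second order system in $t$ with coefficients that are operators on the sphere. Its separated solutions $r^{-1-\lambda}\Phi(\sigma)$ correspond to a quadratic eigenvalue problem $\mathcal{L}_b(\lambda)\Phi=0$ on $\us$, a spherical Stokes-type operator perturbed by $\epsilon$-dependent terms coming from $V^b,F^b$, subject to the divergence constraint.

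First I analyze $\mathcal{L}_0(\lambda)$, which at $\epsilon=0$ is the Stokes operator. Its exponents are explicit through the homogeneous Stokes solutions of degree $-2$: the gradient of the dipole $\nabla(x\cdot a/|x|^3)$ ($3$ dimensions), the rotlet $a\times x/|x|^3$ ($3$ dimensions), and the stresslet-type fields $\nabla\cdot$ of the Stokeslet, from which the trace part is removed by the zero-flux condition ($5$ dimensions). This gives the candidate eigenvalue $\lambda=1$ with a finite multiplicity. The fields forced by $b$ and the zero-flux condition are excluded, and the count should come out to eight. All other exponents have real part at least $3/2$ up to $O(\epsilon)$; the next Stokes exponent corresponds to decay $r^{-3}$.

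Next I perturb in $\epsilon$. The operator commutes with rotations about the $b$-axis, so the eight-dimensional space splits by azimuthal number $m$. Reflection symmetries pair $\pm m$, which produces the real two-dimensional blocks $E^2_b,E^3_b$ carrying complex eigenvalues $\alpha+i\beta$. The block $E^1_b$ keeps the eigenvalue exactly $1$: I would identify it via exact symmetries of the Landau family, namely derivatives of $U^b$ with respect to translations of the origin ($\partial_{x_k}U^b$ is $-2$-homogeneous), together with the rotational and $b$-variation directions that are compatible with fixed $b$. Four such independent directions should survive. For the remaining blocks, second order Rayleigh–Schrödinger perturbation theory yields $\alpha=1+c\epsilon^2+O(\epsilon^3)$, and computing $c=1/15$ and $c=4/15$ is an explicit Legendre-function computation. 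The first order shift vanishes by parity. This step is the \textbf{main obstacle}: the problem is non-self-adjoint and quadratic in $\lambda$, so I must verify that $\lambda=1$ is semisimple for $\mathcal{L}_0$, with no Jordan blocks and hence no $\log r$ terms. I must also organize the second order computation correctly through a Schur complement onto the eight-dimensional kernel. I would first try reducing to axisymmetric-type ODE systems for each $m$.

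Finally, I do the asymptotic expansion. I choose a contour at $\mathrm{Re}\,\lambda=3/2$ (decay $r^{-5/2}$) that, for $\epsilon$ small, separates the eight eigenvalues near $1$ from the rest of the spectrum. Resolvent estimates for $\mathcal{L}_b(\lambda)$ on the lines $\mathrm{Re}\,\lambda=\gamma-1$ and $\mathrm{Re}\,\lambda=3/2$ are uniform in $\mathrm{Im}\,\lambda$ by ellipticity. Treating $-w\cdot\nabla w$ as a forcing of size $r^{-2\gamma-1}\le r^{-7/2}$ and using a cutoff near $r=1$, I shift the contour and pick up the residues at the eight eigenvalues, which gives exactly the three groups of terms in \eqref{MTh1.3}. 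The remainder comes from the contour at $3/2$ together with the particular solution for the nonlinearity, and is $O(r^{-5/2})$. Pointwise bounds then follow from weighted $L^2$ bounds by local elliptic regularity on dyadic annuli.
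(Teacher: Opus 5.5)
Your architecture matches the paper's in most respects. The paper also turns the problem into an autonomous evolution in $\log r$; it linearizes your quadratic pencil into a first-order operator $L=L_0+K$ on $X$, which is equivalent. It uses the Korolev--\v{S}ver\'ak bound as a priori decay, separates the spectrum near $\lambda=1$ by resolvent contour integrals, and runs the second-order Schur complement $I+A+B(I-\Lambda)^{-1}C$ in the azimuthal sectors, with the first-order shift killed by the reflection $T$.

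\textbf{The genuine gap is the block $E^1_b$.} Derivatives of $U^b$ along rotations or along variations of $b$ are $-1$-homogeneous. They are therefore eigenvectors for $\lambda=0$ (they span $P^\ast_0X$, see Remark~\ref{rem2}), not for $\lambda=1$. Rotation about the $b$-axis fixes $U^b$ and gives nothing. So your mechanism produces only the three translation modes $\partial_{x_k}U^b$, and ``four such directions should survive'' is unsupported.

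The fourth direction lies in the $m=0$ pure-swirl sector; at $\epsilon=0$ it is the rotlet $e_3\times x/|x|^3$. No symmetry of the Landau family generates it. Perturbation theory to any finite order only places its eigenvalue at $1+O(\epsilon^N)$. That yields neither an exact $|x|^{-2}$ term nor $L|_{E^1_b}=I$.

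The paper closes this non-perturbatively. Pure-swirl fields are $L$-invariant because $LS=SL$. The ansatz $v=r^{-1}g(\cos\theta)\sin\theta\,e_\phi$ reduces the eigenvalue problem to the ODE \eqref{GFL27}, and $g=c(1-\epsilon t)^{-2}$ solves it exactly.

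A conceptual substitute you could use instead is angular momentum conservation. The linearized stress tensor is symmetric and divergence free, so the torque $\int_{|x|=R}x\times Tn\,d\sigma$ is independent of $R$. For a $(-1-\lambda)$-homogeneous field it scales like $R^{1-\lambda}$. It is nonzero on the perturbed pure-swirl eigenvector, by continuity from the rotlet, so that eigenvalue is exactly $1$.

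Your count of the unperturbed $\lambda=1$ modes is off in its parts, even though the total is right:
\begin{itemize}
\item $\nabla(a\cdot x/|x|^3)$ is $-3$-homogeneous, so it belongs to $\lambda=2$.
\item The trace of $\partial_kG_{ij}$ vanishes identically by incompressibility. The mode excluded by \eqref{flux_zero} is the source $x/|x|^3$.
\item The fields tied to $b$ are Stokeslets at $\lambda=0$, not $\lambda=1$.
\end{itemize}
The correct decomposition is $3$ rotlets plus $5$ stresslets, as in Remark~\ref{remark4.1}. In particular the $m=0$ sector consists of the axisymmetric no-swirl stresslet, continued by $\partial_{x_3}U^b$, and the pure-swirl rotlet, which needs the separate argument above.

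The values $1/15$ and $4/15$ are asserted, not derived, and this computation is the bulk of the paper. In the $m=1$ sector you need the full $2\times 2$ Schur complement. The paper computes it on one basis vector and recovers the other column from the exact eigenvector $(\partial_{x_1}+i\partial_{x_2})U^b$.

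Your Mellin contour-shift step is an acceptable replacement for the paper's semigroup/Duhamel argument, with one correction. You need $\gamma>7/4$, not merely $2\gamma+1\ge 7/2$, so that the forcing $O(|x|^{-2\gamma-1})$ leaves a remainder $O(|x|^{-5/2})$.
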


\bigskip

 \begin{remark}  We have the following remarks.

    \begin{enumerate}

    \item The expansion \eqref{MTh1.3} and the formulae \eqref{MTh1.4} imply that the presence of the Landau solution accelerates the decay of the next order terms in comparison with the linear Stokes equation, with all terms decaying at $1/|x|^2$ or faster rates. It would be interesting to come up with a more physical explanation for this phenomenon (our proof is based on precise computations). 
 
        \item Careful numerical calculations suggest that in \eqref{MTh1.4}, $\beta_2=\beta_3=0$, and there is no ``oscillation" in the asymptotic expansion \eqref{MTh1.3}. A rigorous proof is still open.

\item The bound $O(|x|^{-5/2})$ on the higher order term in \eqref{MTh1.3} is taken for conveniences, and a more careful argument gives $O(|x|^{-\mu})$ for some positive $\mu<3$ depending on $|b|$, and $\mu\to3-$ as $|b|\to0+$ (though one might not be able to take $\mu=3$ in the case $b=0$ due to possible logarithmic terms that could appear at $1/|x|^3$ level). 

\item The spaces $E_b^1, E_b^2$ and $E_b^3$ arise naturally from eigenspaces associated with the linearized operator around $U^b$, when appropriately reformulated on the sphere, see \eqref{S1e9}--\eqref{S1e11.1}. $E^b$ depends smoothly on the parameter $b\in\R^3$ for sufficiently small $|b|$, in the sense that the associated spectral projection operator $P^b$ to the space $E^b$ (which can be defined through a contour integral of the resolvent) depends smoothly on $b$. Moreover, as $b\to 0$, $P^b$ converges to the projection operator to the space $E_1$ given explicitly in Proposition \ref{S2P1}, which corresponds to the expansion of solutions to the linear Stokes system at the order $1/|x|^2$. In particular, Theorem \ref{MTh1} holds uniformly in the limit $b\to0$. 
  \end{enumerate}
\end{remark}

We emphasize that it is crucial to identify precisely the function spaces $E_b^1, E_b^2$ and $E_b^3$ which capture the approximately quadratically decaying terms in the expansion \eqref{MTh1.3}, since these terms can not be treated by simple perturbation arguments alone. 

Roughly speaking, the reason is as follows. The interaction of the main term $U^b$ and the quadratically decaying terms through nonlinearity produces terms which decay with the rate $O(1/|x|^4)$, which also contains a derivative.  Upon integration by parts when inverting the linear Stokes system, we obtain an integral of the type
\begin{equation}\label{Mth2}
    D(x):=\int_{|y|>1}\frac{1}{|x-y|^2}\frac{1}{|y|^3}dy,
\end{equation}
where $x$ can be assumed to be large. It is clear by simple calculations that $D(x)\sim |x|^{-2}\log |x|$ for large x. This logarithmic factor is the main reason why rough arguments treating the quadratic terms as pure perturbations only yield the less precise expansion \eqref{Int7}. Heuristically, the decay rate of Landau solutions $U^b$ is exactly ``critical" for the study of long distance asymptotics of the steady Navier Stokes equations, and could potentially change the rate of decay of small perturbations as $x\to\infty$. 

To obtain the sharper expansion \eqref{MTh1.3}, the key observation is to identify various terms in the expansion as eigenfunctions corresponding to the linearized Navier Stokes equation around $U^b$. The full analysis is quite complicated since we need to deal with vector and pressure fields which lead to many dimensions. The main idea though can already seen from the following simple example. 

Consider the equation 
\begin{equation}\label{MTh3}
    -\Delta\varphi+\frac{a(x/|x|)}{|x|^2}\varphi(x)=0, \quad{\rm for}\,\,x\in\R^3\backslash B_1.
\end{equation}
We assume that $a\in C^\infty(\us)$ and the smooth solution $u$ satisfy the bound for some $C>0$ and all $|x|\ge1$,
\begin{equation}\label{MTh4}
    |u(x)|\leq C/|x|.
\end{equation}
Using spherical coordinate $r>0, \,\,\omega\in\us$, we can reformulate equation \eqref{MTh3} as
\begin{equation}\label{MTh5}
    -\partial_r^2\varphi(r,\omega)-\frac{2}{r}\partial_r\varphi(r,\omega)-\frac{1}{r^2}\Delta_{\us}\varphi(r,\omega)+\frac{1}{r^2}a(\omega)\varphi(r,\omega)=0.
\end{equation}
In the above, $\Delta_{\us}$ denotes the spherical Laplacian on $\us$. Define the operator $A: H^2(\us)\to L^2(\us)$ as 
\begin{equation}\label{MTh6}
    h\in H^2(\us)\to -\Delta_{\us}h+a(\omega)h. 
\end{equation}
$A$ is a self adjoint operator with real discrete eigenvavlues $\lambda_j\in\R, j\in\Z\cap[1,\infty)$ and $\lambda_j\to\infty$ as $j\to\infty$. For $\lambda_j>0$, assume that $\lambda_j=\alpha(\alpha-1)$ for some $\alpha>1$ and that $\Phi_j$ is an eigenfunction for $\lambda_j$. Simple calcultions show that 
\begin{equation}\label{MTh7}
    \varphi(r,\omega)=r^{-\alpha}\Phi_j(\omega)
\end{equation}
is a solution to \eqref{MTh5}. A slightly more detailed analysis shows that, in fact, the spectral property of $A$ completely determines the asymptotic expansion of solutions to \eqref{MTh5} as $x\to\infty$. 

Motivated by the example \eqref{MTh3}, we reformulate \eqref{main1} and the linearized Navier Stokes equation around $U^b$ in spherical coordinates, and define the corresponding operators on $\us$. The full equation \eqref{main1} can be viewed as a dynamical system in $r>0$ (more precisely we will work with the variable $s=-\log r$) with values in $L^2(\us)$. The resulting operator defined on the sphere is non self-adjoint and acts on fields with $6$ components. Using general spectral theory, we reduce the analysis to the calculation of the perturbed eigenvalues (due to the Landau solution) from the the eigenvalues $\lambda\in\{0,1\}$ of the unperturbed operator (corresponding to the linear Stokes system reformulated on the unit sphere). We can show using symmetries of the equation \eqref{main1} that $\lambda=0$ is invariant under small perturbations by the Landau solutions. 

The computation of the perturbed eigenvalues around $\lambda=1$ is the heart of our analysis. Since the eigenspace corresponding to $\lambda=1$ of the unperturbed operator is $8$ dimensional, one can expect that the eigenvalue $\lambda=1$ may split into several curves of eigenvalues under small perturbations. 

We first use the structure of the perturbation to show that $\lambda=1$ remains an eigenvalue for the perturbed operator with a four-dimensional eigenspace. Translation symmetries produce three linearly independent eigenvectors, but the fourth eigenvector is quite nontrivial and is given by a ``pure swirl" field, see section \ref{sec:pureswirl}.

To study the behavior of the remaining four dimensional subspace, we note that $U^b$ has an additional rotational symmetry, which allows to take a Fourier transform in $\phi$ and separately consider the modes $m=1$ and $m=2$ cases. To facilitate the calculations, we formulate a general method of tracking the eigenvalues, by describing precisely the invariant subspace as a graph over the unperturbed eigenspace up to any order. The task is then reduced to a finite dimensional calculation that is somewhat similar to what is usually done in quantum mechanics. We hope that the general method we introduced here may be of use for other perturbative problems where precise computation of eigenvalues with high multiplicities is required. 

\subsection{Notations}
Below we shall use 
\begin{equation}\label{not0}
   \bV^b:=V^be_\theta. 
\end{equation}
For simplicity of notations, we sometimes use the shorthands when there is no possibility of confusion
\begin{equation}\label{not1}
   V:=V^\epsilon:=V^b,\quad \bV:=\bV^\epsilon:=\bV^b,\quad F:=F^\epsilon:=F^b.
\end{equation}

\section{Main equations}\label{sec:maineq}
Suppose $u: \R^3\backslash B_1\to \R^3, p: \R^3\backslash B_1\to \R$ are smooth functions satisfying equation \eqref{main1} and the decay condition as $x\to\infty$,
\begin{equation}\label{main2}
\big|u(x)\big|=O\big(1/|x|\big),\qquad \big|p(x)\big|=O\big(1/|x|^2\big). 
\end{equation}
Writing for $x=ry, \,r>0, \,y\in \mathbb{S}^2$, 
\begin{equation}\label{S1e1}
u(x)=\frac{1}{r}\big(v(r,y)+f(r,y)e(y)\big)\qquad p(x)=\frac{1}{r^2}q(r,y), 
\end{equation}
where $v$ is tangent to the sphere and $e(y)=y$ for $y\in\mathbb{S}^2$. Then the Navier Stokes equations can be re-written as
\begin{equation}\label{S1e2}
\begin{split}
&-\partial_r^2v+\frac{1}{r^2}(-\Delta v+v\cdot\nabla v +rf\partial_rv+\nabla(q-2f))=0,\\
&-\partial_r^2f+\frac{1}{r^2}(-\Delta f+v\cdot\nabla f-|v|^2-2r\partial_rf-f^2+rf\partial_rf-2q+r\partial_rq)=0,\\
&{\rm div}\,v+f+r\partial_rf=0.
\end{split}
\end{equation}
The detailed derivation of \eqref{S1e2} can be found in \cite{sverak2011landau}.  

The system \eqref{S1e2} has a three parameter family of solutions which are smooth on $\us$ and are independent of $r>0$, called Landau solutions. Denote these solutions as $V^b(y), F^b(y), y\in \us$ with $b\in
\R^3$. We are interested in the behavior of solutions which are close to the Landau solutions. The linearized system is 
\begin{equation}\label{S1e3}
\begin{split}
&-\partial_r^2v+\frac{1}{r^2}(-\Delta v+{\bf V}^b\cdot\nabla v+v\cdot\nabla {\bf V}^b+rF^b\partial_rv+\nabla(q-2f))=0,\\
&-\partial_r^2f+\frac{1}{r^2}\big(-\Delta f+{\bf V}^b\cdot\nabla f+v\cdot\nabla F^b-2v\cdot {\bf V}^b-2r\partial_rf-2fF^b\\
&\qquad\qquad\qquad+\,r\partial_rfF^b-2q+r\partial_rq\big)=0,\\
&{\rm div}\,v+f+r\partial_rf=0.
\end{split}
\end{equation}
We assume that the perturbation $(v,f)$ satisfy $|v,f|=o(1/r)$ as $r\to\infty$. We now reformulate the system \eqref{S1e3} as a system of evolution equations in $\log r$ as follows. 
Set
\begin{equation}\label{S1e4}
\begin{split}
&\xi(s,y):=v(r,y), \quad \xi'(s,y):=r\partial_rv(r,y),\\
&\vartheta(s,y):=f(r,y),\quad \vartheta'(s,y):=r\partial_rf(r,y),\,\, s=-\log r, \,\,r\in(R,\infty), \,\,s_0:=-\log R.
\end{split}
\end{equation}
In the variables $\xi, \xi', \vartheta,\vartheta'$, the system \eqref{S1e3} becomes
\begin{equation}\label{S1e5}
\begin{split}
&\partial_s\xi(s,y)=-\xi'(s,y),\\
&\partial_s\xi'(s,y)=-\xi'+\big(\Delta\xi-{\bf V}^b\cdot\nabla\xi-\xi\cdot\nabla {\bf V}^b-F^b\xi'-\nabla(q-2\vartheta)\big)(s,y),\\
&\partial_s\vartheta(s,y)=-\vartheta'(s,y),\\
&\partial_s\vartheta'(s,y)=\vartheta'(s,y)+\big(\Delta\vartheta-{\bf V}^b\cdot\nabla\vartheta-\xi\cdot\nabla F^b+2\xi \cdot {\bf V}^b\\
&\qquad\qquad \qquad\qquad\qquad+2\vartheta F^b-\vartheta'F^b+2q+\partial_sq\big)(s,y),\\
&{\rm div}\,\xi(s,y)+\vartheta(s,y)+\vartheta'(s,y)=0,\qquad {\rm for}\,\,y\in\us, s\in(-\infty,s_0].
\end{split}
\end{equation}
It is convenient to also introduce the vorticity-like functions $\omega(s,y)$ and $\omega'(s,y)$ through 
\begin{equation}\label{S1e6}
d\xi(s,y):=\omega(s,y)\Omega_0,\qquad d\xi'(s,y):=\omega'(s,y)\Omega_0,
\end{equation}
where $\Omega_0$ is the standard volume form on $\us$. 

To eliminate the term $\partial_sq$, we define
\begin{equation}\label{S1e7}
\vartheta^{\ast}:=\vartheta'-q.
\end{equation}
Then the system \eqref{S1e5} can be reformulated as 
\begin{equation}\label{S1e8}
\begin{split}
&\partial_s\xi(s,y)=-\xi'(s,y),\\
&\partial_s\xi'(s,y)=-\xi'(s,y)+\big(\Delta\xi-{\bf V}^b\cdot\nabla\xi-\xi\cdot\nabla {\bf V}^b-F^b\xi'-\nabla(q-2\vartheta)\big)(s,y),\\
&\partial_s\vartheta(s,y)=-\vartheta^\ast(s,y)-q(s,y),\\
&\partial_s\vartheta^\ast(s,y)=\big(\vartheta^\ast+3q+\Delta\vartheta-{\bf V}^b\cdot\nabla\vartheta-\xi\cdot\nabla F^b+2\xi \cdot {\bf V}^b\\
&\qquad\qquad\qquad+2\vartheta F^b-\vartheta^\ast F^b-q F^b\big)(s,y),\\
&{\rm div}\,\xi+\vartheta+\vartheta^\ast+q=0,\qquad {\rm for}\,\,y\in\us, s\in(-\infty,s_0].
\end{split}
\end{equation}
The asymptotics of $u$ is now transformed to the study of the solutions to the system \eqref{S1e8} as $s\to-\infty$, which depends on the spectral property of the associated linear operator. Define
\begin{equation}\label{S1e9}
L\begin{pmatrix}
\xi\\
\xi'\\
\vartheta\\
\vartheta^\ast
\end{pmatrix}=\begin{pmatrix}
-\xi'\\
-\xi'+\Delta\xi-{\bf V}^b\cdot\nabla\xi-\xi\cdot\nabla {\bf V}^b-F^b\xi'-\nabla(q-2\vartheta)\\
-\vartheta^\ast-q\\
\vartheta^\ast+3q+\Delta\vartheta-{\bf V}^b\cdot\nabla\vartheta-\xi\cdot\nabla F^b+2\xi \cdot {\bf V}^b+2\vartheta F^b-\vartheta^\ast F^b-q F^b
\end{pmatrix}
\end{equation}
and the closely related
\begin{equation}\label{S1e10}
L_0\begin{pmatrix}
\xi\\
\xi'\\
\vartheta\\
\vartheta^\ast
\end{pmatrix}=\begin{pmatrix}
-\xi'\\
-\xi'+\Delta\xi-\nabla(q-2\vartheta)\\
-\vartheta^\ast-q\\
\vartheta^\ast+3q+\Delta\vartheta
\end{pmatrix},
\end{equation}
where the function $q$ is given by ${\rm div}\,\xi+\vartheta+\vartheta^\ast+q=0,\,\, {\rm for}\,\,y\in\us.$
The operators $L, L_0$ are defined on the space
\begin{equation}\label{S1e11}
\mathcal{D}:=\big\{(\xi,\xi',\vartheta,\vartheta'):\xi\in H^3(\us), \xi'\in H^2(\us), \vartheta\in H^3(\us), \int_{\us}\vartheta\,=0, \vartheta^\ast\in H^2(\us)\big\} 
\end{equation}
and map to the space
\begin{equation}\label{S1e11.1}
X:=\big\{(f,f',g,g^\ast):f\in H^2(\us), f'\in H^1(\us), g\in H^2(\us), \int_{\us}g=0, g^\ast\in H^1(\us)\big\}.
\end{equation}
The integral condition that $\int_{\us}g=\int_{\us}\vartheta\,=0$ is motivated by the ``no outflow" condition for divergence free fields.

We can reformulate the steady Navier Stokes equation \eqref{S1e2} outside a fixed ball $B_{r_0}, r_0>0, r_0=e^{-s_0}$ as
\begin{equation}\label{MainEv}
\partial_s\begin{pmatrix}
\xi\\
\xi'\\
\vartheta\\
\vartheta^\ast
\end{pmatrix}=L\begin{pmatrix}
\xi\\
\xi'\\
\vartheta\\
\vartheta^\ast
\end{pmatrix}+\begin{pmatrix}
0\\
-\xi\cdot\nabla\xi-\vartheta\xi'\\
0\\
-\xi\cdot\nabla\vartheta+\vartheta^2+\vartheta(\vartheta^\ast+q)\end{pmatrix}, \qquad{\rm for}\,\,s\in(-\infty,s_0],
\end{equation}
where we assume ${\rm div}\,\xi+\vartheta+\vartheta^\ast+q=0$, which is the usual divergence free condition.

We summarize our calculations in the following proposition.
\begin{proposition}\label{Pe}
Fix $R>0$. Suppose that $u: \R^3\backslash B_R\to \R^3, p: \R^3\backslash B_R\to \R$ are smooth functions satisfying
\begin{equation}\label{Pe1}
\begin{split}
-\Delta u+{\bf V}^b\cdot\nabla u+u\cdot\nabla {\bf V}^b+u\cdot\nabla u+\nabla p&=0,\\
{\rm div}\,u&=0,\qquad {\rm for}\,\,x\in\R^3\backslash B_R,
\end{split}
\end{equation}
and the decay condition as $x\to\infty$ (with some $\delta\in(0,1)$)
\begin{equation}\label{Pe2}
\big|u(x)\big|=O\big(1/|x|^{1+\delta}\big),\qquad \big|p(x)\big|=O\big(1/|x|^{2+\delta}\big). 
\end{equation}

 Writing for $x=ry, \,r>0, \,y\in \mathbb{S}^2$,
\begin{equation}\label{Pe3}
u(x)=\frac{1}{r}(v(r,y)+f(r,y)e(y)),\quad p(x)=\frac{1}{r^2}q(r,y),  
\end{equation}
where $v$ is tangent to the sphere and $e(y)=y$ for $y\in\mathbb{S}^2$. Set
\begin{equation}\label{Pe4}
\begin{split}
&\xi(s,y):=v(r,y), \quad \xi'(s,y):=r\partial_rv(r,y),\quad \vartheta(s,y):=f(r,y),\\
&\vartheta'(s,y):=r\partial_rf(r,y),\quad \vartheta^{\ast}:=\vartheta'-q, \qquad s=-\log r, \,\,r\in(R,\infty), \,\,s_0:=-\log R.
\end{split}
\end{equation}
We can reformulate the steady Navier Stokes equation \eqref{Pe1} as
\begin{equation}\label{Pe6}
\partial_s\begin{pmatrix}
\xi\\
\xi'\\
\vartheta\\
\vartheta^\ast
\end{pmatrix}=L\begin{pmatrix}
\xi\\
\xi'\\
\vartheta\\
\vartheta^\ast
\end{pmatrix}+\begin{pmatrix}
0\\
-\xi\cdot\nabla\xi-\vartheta\xi'\\
0\\
-\xi\cdot\nabla\vartheta+\vartheta^2+\vartheta(\vartheta^\ast+q)\end{pmatrix}, \qquad{\rm for}\,\,s\in(-\infty,s_0],
\end{equation}
where we assume ${\rm div}\,\xi+\vartheta+\vartheta^\ast+q=0$, which is the divergence free condition. We also use \eqref{S1e9} for the definition of $L$.

\end{proposition}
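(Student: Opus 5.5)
The plan is to verify the identity by direct computation, in three stages. First, we write the full system in the polar variables. Second, we subtract the $r$-independent Landau profile. Third, we change the radial variable to $s=-\log r$ and eliminate $\partial_s q$. Nothing beyond the polar-coordinate form \eqref{S1e2} of the Navier--Stokes system (taken from \cite{sverak2011landau}) is needed. The decay assumption \eqref{Pe2} plays no role in the algebra itself. It only guarantees that $(\xi,\xi',\vartheta,\vartheta^\ast)$ are well defined, tend to $0$ as $s\to-\infty$, and lie in the function spaces on which $L$ is defined.

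\emph{Step 1.} Here ${\bf V}^b$ in \eqref{Pe1} stands for the full Landau field $U^b$. Then $U^b+u$, $P^b+p$ solve \eqref{main1} in $\R^3\backslash B_R$. In the decomposition \eqref{S1e1}, the total field corresponds to $(V^b+v,\,F^b+f,\,q_{\rm Landau}+q)$. We plug this into \eqref{S1e2} and use that $(V^b,F^b,q_{\rm Landau})$ is an $r$-independent solution of \eqref{S1e2}. Subtracting that equation, the remaining terms split into two groups:
\begin{itemize}
\item those linear in $(v,f,q)$, which give exactly \eqref{S1e3};
\item the genuinely quadratic terms, namely $v\cdot\nabla v+rf\partial_rv$ in the tangential equation and $v\cdot\nabla f-|v|^2-f^2+rf\partial_rf$ in the radial one.
\end{itemize}
The Landau pressure only enters through $\nabla(q-2f)$, $-2q$ and $r\partial_r q$, so it cancels against the Landau equation. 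Consequently $q$ denotes only the perturbation pressure.

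\emph{Step 2.} We multiply by $r^2$ and use $r\partial_r=-\partial_s$, together with
\[
r^2\partial_r^2=(r\partial_r)^2-r\partial_r .
\]
With $\xi'=r\partial_r v$ we get $\partial_s\xi=-\xi'$, and $-r^2\partial_r^2v=-\partial_s(-\xi')\cdot(-1)\ldots$ reduces to $\partial_s\xi'=-\xi'+(\text{rest})$. This yields the second row of $L$ plus the nonlinear term $-\xi\cdot\nabla\xi-\vartheta\xi'$, where $rf\partial_rv=\vartheta\xi'$. The same manipulation on the $f$-equation gives $\partial_s\vartheta'$ in terms of $\vartheta'$, $\Delta\vartheta$, the lower-order terms, and $2q+\partial_sq$, exactly as in \eqref{S1e5}.

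The divergence condition becomes ${\rm div}\,\xi+\vartheta+\vartheta'=0$. Substituting $\vartheta'=\vartheta^\ast+q$ turns this into ${\rm div}\,\xi+\vartheta+\vartheta^\ast+q=0$ and gives $\partial_s\vartheta=-\vartheta^\ast-q$. The identity $\partial_s\vartheta^\ast=\partial_s\vartheta'-\partial_sq$ then removes $\partial_sq$ and converts $2q$ into $3q$, after accounting for the $\vartheta'=\vartheta^\ast+q$ substitutions in the terms $\vartheta'$ and $-\vartheta'F^b$. This produces the fourth row of $L$ in \eqref{S1e9}.

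\emph{Step 3.} The last point, and the main place where errors could occur, is the bookkeeping of the nonlinear radial term. Its sign is fixed by the way $-\partial_r^2 f$ converts under $s$, and some of the quadratic contributions may be absorbed using the divergence relation. We track $-|v|^2-f^2+rf\partial_rf$ through the same sign flip. Writing $rf\partial_rf=\vartheta\vartheta'=\vartheta(\vartheta^\ast+q)$, this should give the stated combination
\[
-\xi\cdot\nabla\vartheta+\vartheta^2+\vartheta(\vartheta^\ast+q).
\]
We will check this term by term, together with the accompanying $|\xi|^2$ contribution, to confirm the exact form displayed in \eqref{Pe6}. All other entries of \eqref{Pe6} then follow directly from Steps 1 and 2.
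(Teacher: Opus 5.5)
Your route is the paper's own: quote the polar form \eqref{S1e2} from \cite{sverak2011landau}, subtract the $r$-independent Landau profile, pass to $s=-\log r$, and trade $\vartheta'$ for $\vartheta^\ast=\vartheta'-q$. Steps 1--2 are correct. The linear terms reproduce \eqref{S1e3} and the rows of $L$ in \eqref{S1e9}, and the second-row nonlinearity $-\xi\cdot\nabla\xi-\vartheta\xi'$ comes out right. (The garbled identity in Step 2 should read $-r^2\partial_r^2v=\partial_s\xi'+\xi'$.) The gap is Step 3. There you assert that the radial quadratic terms ``should give the stated combination'' and postpone the check, but that check fails.

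Carry out the sign flip you describe. Use $-r^2\partial_r^2 f=\partial_s\vartheta'+\vartheta'$ and move everything else to the right. The quadratic terms $v\cdot\nabla f-|v|^2-f^2+rf\partial_rf$ of \eqref{S1e2} then become
\[
-\xi\cdot\nabla\vartheta+|\xi|^2+\vartheta^2-\vartheta(\vartheta^\ast+q),
\]
and subtracting $\partial_s q$ to pass to $\vartheta^\ast$ does not touch them. Compared with \eqref{Pe6}, the displayed formula omits $|\xi|^2$ and has the opposite sign on $\vartheta(\vartheta^\ast+q)$. The divergence relation cannot repair this. Using ${\rm div}\,\xi+\vartheta+\vartheta^\ast+q=0$, the displayed expression equals $-{\rm div}(\vartheta\xi)$, while yours equals $-\xi\cdot\nabla\vartheta+\vartheta\,{\rm div}\,\xi+|\xi|^2+2\vartheta^2$.

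Two checks at $b=0$ confirm your version; the nonlinearity does not depend on $b$.
\begin{itemize}
\item The Landau profile ($\xi=V^{b'}e_\theta$, $\xi'=0$, $\vartheta=F^{b'}$, $\vartheta'=0$) is an $s$-independent solution of the system only if the $|\xi|^2$ term is present.
\item The potential flow $u=-c\,x/|x|^3$, $p=-|u|^2/2$ satisfies all the hypotheses. It has $\xi=0$, $\vartheta=-ce^{s}$, $\vartheta'=ce^{s}$, $q=-\tfrac{c^2}{2}e^{2s}$. The fourth component must equal $2c^2e^{2s}$; your expression gives this, and the displayed one gives $0$.
\end{itemize}

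So the last entry of the nonlinearity in \eqref{Pe6} (and in \eqref{MainEv}) is a misprint. A correct proof should derive and state $-\xi\cdot\nabla\vartheta+|\xi|^2+\vartheta^2-\vartheta(\vartheta^\ast+q)$, not try to confirm the displayed form. The correction is harmless downstream: the paper only uses the quadratic bound \eqref{S6e2}, which holds equally for the corrected nonlinearity.
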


%
\section{Spectral properties of $L_0$ and bounds on its associated semigroups}
Let us take $\lambda \in \mathbb{C}$ and consider the problem of inverting $L_0-\lambda$. Therefore, we need to study the equation
\begin{equation}\label{S2e1}
\begin{split}
&-\xi'-\lambda\xi=f,\\
&-\xi'+\Delta\xi-\nabla(q-2\vartheta)-\lambda\xi'=f',\\
&-\vartheta^\ast-q-\lambda\vartheta=g,\\
&\vartheta^\ast+3q+\Delta\vartheta-\lambda\vartheta^\ast=g^\ast,\\
&{\rm div}\,\xi+\vartheta+\vartheta^\ast+q=0,
\end{split}
\end{equation}
for $y\in \us$, where we assume that $f', g^\ast\in L^2(\us)$ and $f, g\in H^1(\us)$ with $\int_{\us}g=0$. We can normalize and assume that 
\begin{equation}\label{S2e1.1}
\|(f,f',g,g^\ast)\|_{X}\leq1.
\end{equation}
 After simplification, we obtain on $\us$,
\begin{eqnarray}
&&\xi'=-f-\lambda \xi,\label{S2e2.1}\\
&&-\vartheta^\ast=g+q+\lambda\vartheta,\label{S2e2.2}\\
&&\Delta\xi+\lambda(1+\lambda)\xi-\nabla(q-2\vartheta)=f'-(1+\lambda)f,\label{S2e2.3}\\
&&\Delta\vartheta+\lambda(\lambda-1)\vartheta+(\lambda+2)q=g^{\ast}-(\lambda-1)g,\label{S2e2.4}\\
&&{\rm div}\,\xi-(\lambda-1)\vartheta-g=0.\label{S2e2.5}
\end{eqnarray}
We only need to focus on the system \eqref{S2e2.3}-\eqref{S2e2.5}. Applying $d$ operator to \eqref{S2e2.3} and recalling the definition \eqref{S1e6} for $\omega$, we get 
\begin{equation}\label{S2e3}
\Delta \omega+\lambda(1+\lambda) \omega=*df'-(1+\lambda)*df.
\end{equation}
To ensure this equation is solvable, in view of the spectrum of the spherical Laplacian, we assume that $\lambda\not\in \mathbb{Z}$. Then equation \eqref{S2e3} is solvable and we have the bounds
\begin{equation}\label{S2e4}
\|\omega\|_{H^2(\us)}\lesssim_{\lambda}1.
\end{equation}
We remark that the implied constants depend delicately on $\lambda$ and we will later on derive more effective bounds. Now applying $d^\ast$ operator to \eqref{S2e2.3} we obtain
\begin{equation}\label{S2e5}
\Delta d^\ast\xi+\lambda(1+\lambda)d^\ast\xi+\Delta(q-2\vartheta)=d^\ast f'-(1+\lambda)d^\ast f.
\end{equation}
It then follows from \eqref{S2e5}, \eqref{S2e2.4}-\eqref{S2e2.5} and $d^\ast \xi=-{\rm div}\,\xi$ that
\begin{equation}\label{S2e6}
\Delta q+(\lambda+1)(\lambda+2)q=d^\ast f'-(1+\lambda)d^\ast f+\Delta g+(\lambda+1)g^\ast+(\lambda+1)g.
\end{equation}
Since $\lambda\not\in\mathbb{Z}$, \eqref{S2e6} can be solved and we have the bounds
\begin{equation}\label{S2e7}
\| q\|_{H^2(\us)}\lesssim_{\lambda}1.
\end{equation}
With the bounds \eqref{S2e7} on $q$, we can now use \eqref{S2e2.4} to solve for $\vartheta$ and then solve \eqref{S2e2.2} for $\xi$. Summarizing the above, if $\lambda\not\in \mathbb{Z}$, then the system \eqref{S2e1} is uniquely solvable for $f, f', g, g^\ast$ satisfying \eqref{S2e1.1}, and we have the bounds
\begin{equation}\label{S2e8}
\|(\xi,\xi',\vartheta,\vartheta^\ast)\|_{\mathcal{D}}\lesssim_{\lambda}1.
\end{equation}

Therefore we can conclude that the spectrum $\sigma(L_0)$ of $L_0$ is contained in $\mathbb{Z}$. For $\lambda=k\in\mathbb{Z}$, the eigenvalue equation is
\begin{eqnarray}
&&\xi'=-\lambda \xi,\label{S2e9.1}\\
&&-\vartheta^\ast=q+\lambda\vartheta,\label{S2e9.2}\\
&&\Delta\xi+\lambda(1+\lambda)\xi-\nabla(q-2\vartheta)=0,\label{S2e9.3}\\
&&\Delta\vartheta+\lambda(\lambda-1)\vartheta+(\lambda+2)q=0,\label{S2e9.4}\\
&&{\rm div}\,\xi-(\lambda-1)\vartheta=0.\label{S2e9.5}
\end{eqnarray}
Using argument as above, we can derive from \eqref{S2e9.2}-\eqref{S2e9.5} the equation
\begin{equation}\label{S2e10}
\Delta q+(\lambda+1)(\lambda+2)q=0.
\end{equation}
Hence 
\begin{equation}\label{S2e11}
q=\Phi_{k+1}\in Y_{k+1}:=\{\varphi\in C^2(\us):\,\Delta\varphi=-(k+1)(k+2)\varphi\}.
\end{equation}
 It is well known that $Y_{k+1}$ is $2k+3$ dimensional if $k+1\ge 0$ and $2|k|-3$ dimensional if $k+1<0$.
Then we can solve \eqref{S2e9.4}. Notice that $\lambda(\lambda-1)\neq (\lambda+1)(\lambda+2)$ since $\lambda\in\mathbb{Z}$. Thus we have 
\begin{equation}\label{S2e12}
\vartheta=\frac{k+2}{4k+2}\Phi_{k+1}+\Phi_{k-1},\qquad {\rm where}\,\,\Phi_{k-1}\in Y_{k-1}.
\end{equation}
Now recalling the definition that $d\xi=\omega \Omega_0$ with $\Omega_0$ being the standard volume form on $\us$, and using the equation \eqref{S2e3} (where the right hand side vanishes in this case), and \eqref{S2e9.5}, we conclude that
\begin{equation}\label{S2e13}
\omega=*d\xi=\Phi_k, \end{equation}
and
\begin{equation}\label{S2e14}
\qquad{\rm and}\qquad d^{*}\xi=-\frac{(k+2)(k-1)}{4k+2}\Phi_{k+1}-(k-1)\Phi_{k-1}.
\end{equation}
where $\Phi_k\in Y_k$.

For references below, we recall some property of the spherical harmonic functions. Fix $k\in\Z, k\ge 1$, then 
\begin{equation}\label{S31e3}
Y_k={\rm span}\,\left\{\Phi_k^m(\theta,\phi), \,\,\theta\in[0,2\pi], \,\,\phi\in[0,2\pi],\,\,-k\leq m\leq k\right\},
\end{equation}
where the spherical harmonics $\Phi_k^m(\theta,\phi)$ are given by Legendre polynomials as follows
\begin{equation}\label{S31e4}
\begin{split}
\Phi_k^m(\theta,\phi)&=P_k^m(\cos{\theta}) e^{im\phi},\\
P_k^m&=(-1)^m\frac{1}{2^kk!} \big(1-x^2\big)^{m/2}\frac{d^{k+m}}{dx^{k+m}}\big(x^2-1\big)^k,\quad {\rm for}\,\,0\leq m\leq k,\\
P_k^{-m}&=(-1)^m\frac{(k-m)!}{(k+m)!}P_k^m(x)\,\, {\rm for}\,\,0\leq m\leq k,\quad P_k(x):=P_k^0(x).
\end{split}
\end{equation}
We also define
\begin{equation}\label{S31e5}
P_k^m(x)\equiv 0,\quad {\rm if}\,\,|m|>k.
\end{equation}
We have the following orthogonality property for $0\leq m\leq k$ and $\ell\ge0$,
\begin{equation}\label{S31e6}
\int_{-1}^1P_k^m(x)P_{\ell}^m(x)\,dx=\frac{2\cdot(k+m)!}{(2k+1)\cdot(k-m)!}\mathbf{1}_{k=\ell},
\end{equation}
and the recursive formula for $0\leq m\leq k$, $k\ge1$,
\begin{equation}\label{S31e7}
\big(1-x^2\big)\frac{d}{dx}P_k^m(x)=\frac{1}{2k+1}\Big[(k+1)(k+m)P_{k-1}^m(x)-k(k-m+1)P_{k+1}^m(x)\Big].
\end{equation}
The formulae \eqref{S31e6}-\eqref{S31e7} are standard for Legendre polynomials, and can be verified by direct calculations. We also define for $k\in\Z, k\ge1, -k\leq m\leq k$, the normalized (in $L^2(\us)$ up to an unimportant constant) Legendre polynomials
\begin{equation}\label{S31e8}
\overline{P_k^m}(x):=\frac{1}{N_{k,m}}P_k^m(x), \quad{\rm where}\,\,N_{k,m}:=\left[\int_{-1}^1\big|P_k^m(x)\big|^2\,dx\right]^{1/2},
\end{equation}
and the normalized (in $L^2(\us)$ up to an unimportant constant) spherical harmonics
\begin{equation}\label{S31e9}
\overline{\Phi_k^m}(\theta,\phi):=\overline{P_k^m}(\cos{\theta})e^{im\phi}.
\end{equation}

We summarize the above calculations in the following proposition.
\begin{proposition}\label{S2P1}
Define the space $Y_k, k\in\Z$ as the $k-$th spherical harmonic functions 
\begin{equation}\label{S2e14.1}
Y_k:=\Big\{\varphi\in C^2(\us):\,\Delta\varphi=-k(k+1)\varphi\Big\}.
\end{equation}

Recall the definitions \eqref{S1e11}-\eqref{S1e11.1}. Suppose that $L_0: \mathcal{D}\to X$ is defined as in \eqref{S1e10}. Then the spectrum $\sigma(L_0)$ of $L_0$ is $\mathbb{Z}$. For any $k\in\mathbb{Z}$, the 
corresponding eigenspace $E_k$ with eigenvalue $k$ is given by
\begin{equation}\label{S2e15}
\begin{split}
E_k=\Big\{(\xi,\, \xi',\vartheta,\vartheta^\ast):&\, * d\xi=\Phi_k, \vartheta=\frac{k+2}{4k+2}\Phi_{k+1}+\Phi_{k-1}, \int_{\us}\vartheta=0, \\
&d^\ast\xi=-(k-1)\vartheta, \xi'=-k\xi,\\
&\int_{\us} d\xi=0, -\vartheta^\ast=\Phi_{k+1}+k\vartheta, \\
&{\rm where\,\,}\,\,\Phi_{k-1}\in Y_{k-1}, \Phi_k\in Y_k, \Phi_{k+1}\in Y_{k+1}\Big\}.
\end{split}
\end{equation}
\end{proposition}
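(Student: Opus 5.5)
The statement to prove is Proposition \ref{S2P1}, and most of the work is the calculation just before it. The plan has two halves: first, that $\sigma(L_0)\subseteq\mathbb{Z}$; second, that each integer $k$ is an eigenvalue whose eigenspace is exactly $E_k$.

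\textbf{The spectrum lies in $\mathbb{Z}$.} For $\lambda\notin\mathbb{Z}$ I would use the resolvent construction already given. Equation \eqref{S2e3} determines $\omega$, since $\lambda(1+\lambda)\neq j(j+1)$ for every $j\ge 0$. Equation \eqref{S2e6} determines $q$, since $(\lambda+1)(\lambda+2)\neq j(j+1)$. Then \eqref{S2e2.4} determines $\vartheta$, since $\lambda(\lambda-1)$ is not an eigenvalue of $-\Delta$ either. The field $\xi$ is then recovered on $\us$ from its curl $\omega$ and divergence \eqref{S2e2.5} by the Hodge decomposition; $\us$ carries no harmonic $1$-forms, so this recovery is unique. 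Finally $\xi'$ and $\vartheta^\ast$ come from \eqref{S2e2.1}--\eqref{S2e2.2}. Elliptic regularity gives the gain of regularity needed to land in $\mathcal{D}$, so $L_0-\lambda$ is bijective with bounded inverse. The one point to check is that the constraint $\int\vartheta=0$ is preserved. Integrating \eqref{S2e2.4} and using $\int g=0$ together with the average of \eqref{S2e6} should give this, but I would verify it case by case.

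\textbf{Each integer is an eigenvalue, with eigenspace $E_k$.} I would repeat the same reduction with zero right-hand side. This yields $q\in Y_{k+1}$ from \eqref{S2e10}. Solving \eqref{S2e9.4} by decomposing into spherical harmonics gives the particular part $\frac{k+2}{4k+2}\Phi_{k+1}$: the coefficient comes from $-(k+1)(k+2)+k(k-1)=-(4k+2)$, which is nonzero for integer $k$. The homogeneous part is $\Phi_{k-1}\in Y_{k-1}$, because $\lambda(\lambda-1)=(k-1)k$. Next, \eqref{S2e3} with zero right-hand side gives $\omega\in Y_k$, and \eqref{S2e9.5} gives $d^*\xi$, as in \eqref{S2e14}.

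Conversely, given arbitrary $\Phi_{k-1},\Phi_k,\Phi_{k+1}$, I would define $\xi$ by the Hodge decomposition from the prescribed curl and divergence. For this to be possible, the curl and divergence must have zero mean, which accounts for the conditions $\int d\xi=0$ and $\int\vartheta=0$ in \eqref{S2e15}. I would then check that \eqref{S2e9.3} holds. Its $d$-part and $d^*$-part both vanish by construction, and since there are no harmonic $1$-forms the whole equation follows. This shows $E_k\neq\{0\}$ for every $k$; taking for instance $\Phi_k\neq0$ when $k\ge1$ (and, for negative $k$, using $q=\Phi_{k+1}$ under the convention for $Y_{k+1}$ stated in the text), so every integer lies in $\sigma(L_0)$.

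\textbf{Main obstacle.} I expect the delicate part to be the bookkeeping for small and negative $k$, where some $Y_j$ are trivial or coincide ($Y_j=Y_{-j-1}$). There the mean-zero constraints and the factor $(k-1)$ in the divergence can kill components, for example at $k=1$ and $k=0$. This has to be handled consistently with the definition of $\mathcal{D}$. The other care point is confirming that the $\lambda$-dependent inverse bounds are genuinely finite, with no hidden resonance such as $\lambda(\lambda-1)=(\lambda+1)(\lambda+2)$; that equation reduces to $\lambda=-1/2$, which is not an integer and does not threaten invertibility.
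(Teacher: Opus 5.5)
Your proposal follows the paper's own argument (for $\lambda\notin\mathbb{Z}$ solve successively for $\omega$, $q$, $\vartheta$ and recover $\xi$ by the Hodge decomposition; at $\lambda=k$ rerun the same reduction with zero data), and your converse inclusion and nontriviality check fill in points the paper leaves implicit. The one slip is in that nontriviality check, which matters because $\sigma(L_0)=\mathbb{Z}$ requires $E_k\neq\{0\}$ for every integer $k$: at $k=-1$ your witness $q=\Phi_0\neq0$ is not admissible, since $\vartheta=-\tfrac12\Phi_0+\Phi_{-2}$ and $\int_{\us}\vartheta=0$ force $\Phi_0=0$ (and $\Phi_{-1}\in Y_{-1}=Y_0$ is killed by $\int_{\us}d\xi=0$), so there you must take $\Phi_{-2}\in Y_{-2}=Y_1$ nonzero, giving $\xi=\xi'=\nabla\Phi_{-2}$, $\vartheta=\vartheta^\ast=\Phi_{-2}$ (the constant velocity fields); likewise $k=0$, which your case split omits, needs $q=\Phi_1\neq0$, and every other $k$ can simply use $\Phi_k\neq0$.
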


Recall that we have the orthogonal decomposition $$L^2(\us)=\oplus_{k\in\mathbb{Z}\cap [0,\infty)} Y_k.$$
For $m\in\{-2,-1,1,2\}$, and any $\phi,\psi\in C^{\infty}(\us)$, we define the inner product
\begin{equation}\label{S2e16}
(\phi,\psi)_{H^{-m}(\us)}:=(\Phi_0,\Psi_0)_{L^2(\us)}+\sum_{k\in\mathbb{Z}\cap[1,\infty)} \big(k(k+1)\big)^{-m} (\Phi_k,\Psi_k)_{L^2(\us)},
\end{equation}
if
\begin{equation}\label{S2e16.1}
\phi=\sum_{k=0}^{\infty}\Phi_k, \quad \psi=\sum_{k=0}^{\infty}\Psi_k, \qquad \Phi_k,\Psi_k\in Y_k, \,\,{\rm for\,\,each}\,\,k\ge0, k\in\mathbb{Z};
\end{equation}
we also define for any $\phi\in C^{\infty}(\us)$
\begin{equation}\label{S2e16.2}
\|\phi\|_{H^{-m}}^2:=(\phi,\phi)_{H^{-m}}. 
\end{equation}
It is clear that the spaces $H^{-m}(\us)$ defined naturally as the completion of smooth functions on the unit sphere with respect to $H^{-m}$ norms are Hilbert spaces. 

We can extend the Hilbert structure to the spaces $X$ and $\mathcal{D}$. 

\begin{definition}\label{definn}
For $\Xi_1:=(\xi_1,\xi_1',\vartheta_1,\vartheta_1^{\ast})\in X$ and $\Xi_2:=(\xi_2,\xi_2',\vartheta_2,\vartheta_2^{\ast})\in X$,  we define the inner product  
\begin{equation}\label{Se22}
\begin{split}
(\Xi_1,\Xi_2)_X:=&(\ast d\xi_1,\ast d\xi_2)_{H^1(\us)}+(d^\ast\xi_1,d^\ast\xi_2)_{H^1(\us)}+(\ast d\xi_1',\ast d\xi_2')_{L^2(\us)}\\
&+(d^\ast\xi_1',d^\ast\xi_2')_{L^2(\us)}
+(\vartheta_1,\vartheta_2)_{H^2(\us)}+(\vartheta^\ast_1,\vartheta_2^\ast)_{H^1(\us)};
\end{split}
\end{equation}
and similarly for $\Xi_1:=(\xi_1,\xi_1',\vartheta_1,\vartheta_1^{\ast})\in \mathcal{D}$ and $\Xi_2:=(\xi_2,\xi_2',\vartheta_2,\vartheta_2^{\ast})\in \mathcal{D}$,  we define the inner product  
\begin{equation}\label{Se22.01}
\begin{split}
(\Xi_1,\Xi_2)_{\mathcal{D}}:=&(\ast d\xi_1,\ast d\xi_2)_{H^2(\us)}+(d^\ast\xi_1,d^\ast\xi_2)_{H^2(\us)}+(\ast d\xi_1',\ast d\xi_2')_{H^1(\us)}\\
&+(d^\ast\xi_1,d^\ast\xi_2)_{H^1(\us)}+(\vartheta_1,\vartheta_2)_{H^3(\us)}+(\vartheta^\ast_1,\vartheta_2^\ast)_{H^2(\us)}.
\end{split}
\end{equation}
\end{definition}

 Although the eigenvalues and eigenspaces of $L_0$ are given explicitly, there exist eigenvectors correspdoning to distinct, sufficiently large eigenvalues of $L_0$ which become increasingly parallel. As a consequence, projections to eigenspaces are not uniformly bounded for large eigenvalues. To address this issue, we use the following decomposition of $X$. We begin with the following definition.
 
 \begin{definition}\label{Def1}For any $\Xi=(\xi,\xi',\vartheta,\vartheta^{\ast})\in X$, we represent $\Xi$ as
\begin{equation}\label{ProP3}
    \Xi=\big\lgroup\ast d\xi, d^\ast\xi, \ast d\xi', d^\ast \xi', \vartheta, \vartheta^\ast\big\rgroup, 
\end{equation}
and define for $k\in\Z\cap[0,\infty), \Phi_k\in Y_k$,
\begin{equation}\label{ProP4}
\begin{split}
    X_{\Phi_k}:=\bigg\{\Xi\in X|\,\, \Xi&=\big\lgroup c_1,c_2,c_3,c_4,c_5,c_6\big\rgroup\Phi_k\\
    &:=\big\lgroup c_1\Phi_k,c_2\Phi_k,c_3\Phi_k,c_4\Phi_k,c_5\Phi_k,c_6\Phi_k\big\rgroup,\,\,{\rm where}\,\,c_j\in\mathbb{C}\bigg\},
    \end{split}
\end{equation}
and define for any orthonormal basis $\Sigma$ for $Y_k$, \begin{equation}\label{ProP4.5}
    X_k:=\oplus_{\Phi_k\in \Sigma} X_{\Phi_k}.
\end{equation}
Finally, as $Y_{-k-1}=Y_k$ for $k\in\Z\cap[0,\infty)$, we set
\begin{equation}\label{ProP4.51}
    X_{-k-1}:=X_k.
\end{equation}
\end{definition}
It follows from \eqref{S2e15} that for $k\in\Z$, 
  \begin{equation}\label{ProP4.1}
    E_k\subseteq X_{k-1}\oplus X_{k}  \oplus X_{k+1}.
  \end{equation}

  It is easy to check that $L_0: X_{\Phi_k}\to X_{\Phi_k}$ for any $\Phi_k\in Y_k$, and $X$ can be decomposed as the direct sum of $X_{\Phi_k}$ where $k$ ranges over $\in \Z\cap[0,\infty)$ and $\Phi_k$ ranges over an orthonormal basis of $Y_k$.  

\begin{remark}\label{remark4.1}
We note that $E_0$ consists of vectors $\Xi$ of the form $$\big\lgroup0,1,0,0,1,-1\big\rgroup\Phi_1$$ where $\Phi_1\in Y_1$; and that $E_1$ consists of vectors $\Xi$ of the form $$\big\lgroup1,0,-1,0,0,0\big\rgroup\Phi_1+\big\lgroup0,0,0,0,1/2,-3/2\big\rgroup\Phi_2$$ where $\Phi_1\in Y_1, \Phi_2\in Y_2$. In particular, ${\rm dim}\,E_0=3$ and ${\rm dim}\,E_1=8$. 
\end{remark}

We have the following property for the resolvent of $L_0$. 
\begin{proposition}\label{ProP1}
    For any $\lambda\in\mathbb{C}\backslash\Z$ with the property that 
    \begin{equation}
        \min_{k\in\Z}\{|\lambda-k|\}\ge 1/9,
    \end{equation}
    the resolvent of $L_0$ satisfies the bound
    \begin{equation}\label{ProP2}
        \|(\lambda-L_0)^{-1}\|_{X\to X}\lesssim \frac{1}{1+|\Im \lambda|}+\frac{|\Re \lambda|+1}{(|\Im \lambda|+1)^2}. 
    \end{equation}

\end{proposition}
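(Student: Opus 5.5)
Since $L_0$ maps each $X_{\Phi_k}$ into itself and $X$ is the orthogonal (with respect to $(\cdot,\cdot)_X$, up to fixed equivalence of norms) direct sum of the $X_{\Phi_k}$ over an orthonormal basis of each $Y_k$, the resolvent $(\lambda-L_0)^{-1}$ is block diagonal. It therefore suffices to bound, uniformly in $k\ge0$, the inverse of the $6\times6$ matrix $M_k(\lambda)$ that represents $\lambda-L_0$ on $X_{\Phi_k}$ in the coordinates $\lgroup c_1,\dots,c_6\rgroup$. The norm on $X_{\Phi_k}$ must be the weighted one induced by \eqref{Se22}, i.e.\ weights $\langle k\rangle^{2},\langle k\rangle^{2},1,1,\langle k\rangle^{4},\langle k\rangle^{2}$ on $|c_j|^2$, where $\langle k\rangle^2\sim 1+k(k+1)$. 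Conjugating by the diagonal matrix $D_k$ of square roots of these weights, I need $\|D_kM_k(\lambda)^{-1}D_k^{-1}\|_{\C^6\to\C^6}$ bounded by the right side of \eqref{ProP2}, uniformly in $k$.

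To get $M_k$, I write $L_0$ in the representation \eqref{ProP3}. On $X_{\Phi_k}$ one has $\Delta=-k(k+1)$, $\ast d\nabla=0$, $d^\ast\nabla=-\Delta$, and $q=-(d^\ast$-part$)$ terms read off from ${\rm div}\,\xi+\vartheta+\vartheta^\ast+q=0$, i.e.\ $q=c_2-c_5-c_6$ using $d^\ast\xi=-{\rm div}\,\xi$. The system then splits into two decoupled blocks:
\begin{itemize}
\item a $2\times2$ vorticity block in $(c_1,c_3)$ with characteristic polynomial $\mu(1+\mu)-k(k+1)$, roots $\mu=k,\,-k-1$;
\item a $4\times4$ block in $(c_2,c_4,c_5,c_6)$ whose eigenvalues, by the computation preceding Proposition \ref{S2P1}, are $k-1,k,\,-k-1,-k-2$ or similar integers.
\end{itemize}
I would write $M_k^{-1}=\operatorname{adj}(M_k)/\det M_k$. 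The determinants factor as $\prod_j(\lambda-\mu_j)$ with integers $\mu_j\in\{\pm k+O(1)\}$. Since $|\lambda-\mu_j|\ge\frac19$ and also $|\lambda-\mu_j|\gtrsim 1+|\Im\lambda|$, each factor is controlled from below.

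The main work, and the expected obstacle, is the uniformity in large $k$: the eigenvectors for nearby eigenvalues (e.g.\ $k-1$ and $k$, or $-k-1$ and $-k-2$) become nearly parallel, so I cannot diagonalize. Instead I would bound the entries of $D_k\operatorname{adj}(M_k)D_k^{-1}$ directly and compare them with $|\det|$. The factors $(\lambda-\mu_j)$ group into a pair near $+k$ and a pair near $-k$. For $\lambda$ near $+k$ the pair near $-k$ contributes $\sim(|\lambda|+k)^2$, which should cancel the growth $k^2$ of the adjugate entries after weighting. The remaining pair near $+k$ yields a bound
\[
\frac{1}{|\lambda-\mu_1|}+\frac{\text{(off-diagonal coupling)}}{|\lambda-\mu_1||\lambda-\mu_2|}.
\]
This is of the form $\frac{1}{1+|\Im\lambda|}+\frac{C}{(1+|\Im\lambda|)^2}$ once I use that the Jordan-like coupling is $O(1)$ in the weighted variables. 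The $|\Re\lambda|$ in the numerator of \eqref{ProP2} comes from entries where a factor $\lambda$ (from $\lambda\xi$, $\lambda\vartheta$ terms) survives uncancelled. I would check explicitly, by computing the adjugate entries with their degrees in $\lambda$ and $k$, that each is bounded by $(1+|\lambda|+k)^{d}$ times the correct power. I would then separate the cases $|\lambda|\gg k$ (crude Neumann-type bound), $\Re\lambda$ near $k$, and $\Re\lambda$ near $-k$. Small $k$ are finitely many matrices and need no separate argument.
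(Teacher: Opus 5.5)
Your overall route works, and it is a genuinely different argument from the paper's: reduce to $X_{\Phi_k}$, pass to weighted coordinates (your weights are the right ones), and invert the $2\times2$ vorticity block and the $4\times4$ block by Cramer's rule. However, the heuristic you give for the key step is wrong, and taken literally it produces a false bound.

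\textbf{The errors in the key step.}

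First, the $4\times4$ block in $(c_2,c_4,c_5,c_6)$ has eigenvalues $k-1,\,k+1,\,-k,\,-k-2$. The values $k$ and $-k-1$ belong to the decoupled vorticity block. So the nearly parallel pairs are $\{k-1,k+1\}$ and $\{-k,-k-2\}$, each with eigenvalue gap $2$.

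Second, eigenvectors at angle $\sim 1/k$ with gap $2$ force an off-diagonal (Schur) coupling of size $\sim k$, not $O(1)$. For $\bigl(\begin{smallmatrix}\mu&\beta\\ 0&\mu+2\end{smallmatrix}\bigr)$ in orthonormal coordinates, the angle between eigenvectors is $\arctan(2/|\beta|)$. Concretely, in the variables $(sc_2,c_4,s^2c_5,sc_6)$ with $s=\sqrt{k(k+1)}$, $L_0$ acts by
\[
\begin{pmatrix}0&-s&0&0\\ -2s&-1&3&s\\ -s&0&1&0\\ 3&0&-s-3/s&-2\end{pmatrix},
\]
whose $O(s)$ part has a nontrivial Jordan block at each of $\pm s$. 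An $O(1)$ coupling would give $\frac{1}{1+|\Im\lambda|}+\frac{C}{(1+|\Im\lambda|)^2}$, and that is false. For $\lambda=k+i\eta$ with $1\ll\eta\ll k$, the resolvent on $X_{\Phi_k}$ has norm $\gtrsim k/\eta^2$.

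Third, the weighted adjugate entries grow like $(1+|\lambda|+k)^3$, not $k^2$. The far pair cancels two powers, and the single leftover power is exactly the source of $|\Re\lambda|+1$. It is not a vaguely ``uncancelled $\lambda$'' term.

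\textbf{The repair, and how it compares with the paper.}

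The fix is the crude count you allude to at the end. All weighted entries are $O(1+|\lambda|+k)$. The determinants are $(\lambda-k+1)(\lambda-k-1)(\lambda+k)(\lambda+k+2)$ for the $4\times4$ block and $(\lambda-k)(\lambda+k+1)$ for the vorticity block. Hence
\[
\|(\lambda-L_0)^{-1}\|_{X_{\Phi_k}\to X}\lesssim\frac{(1+|\lambda|+k)^3}{|\lambda-k+1|\,|\lambda-k-1|\,|\lambda+k|\,|\lambda+k+2|}+\frac{1+|\lambda|+k}{|\lambda-k|\,|\lambda+k+1|}.
\]
Then \eqref{ProP2} follows by splitting according to the sign of $\Re\lambda$ and whether $|\Re\lambda|$ lies within $k/2$ of $k$. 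The only input needed is $|\lambda-j|\gtrsim\max(1/9,|\Im\lambda|)$ for $j\in\Z$. No Neumann series or separate small-$k$ treatment is required.

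The paper instead expands in the eigenvectors $\Xi_1,\dots,\Xi_6$ and uses their geometry to get the per-$k$ bound \eqref{ProP8}:
\begin{itemize}
\item the vorticity pair is orthogonal to the rest;
\item $\Xi_3,\Xi_4$ and $\Xi_5,\Xi_6$ are at angle $\approx 1/(k+1)$;
\item $\Xi_3,\Xi_5$ are at a bounded angle.
\end{itemize}
That makes the origin of the factor $k+1$ transparent, and it is the same structure behind the projection bounds in Lemma \ref{ProP11}. Your Cramer-rule version needs only the entry sizes and the characteristic polynomials, and so avoids the angle computations that the paper leaves to ``direct computation.''
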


\begin{proof}

We begin with the representation of $\Xi\in X$ as linear combinations of eigenvectors of $L_0$. 

We first consider the main case when $k\ge1$. Let $\Phi_k\in Y_k$. Assume that $c_i\in\mathbb{C}, i\in\Z\cap[1,6]$.  Notice that
\begin{equation}\label{ProP5}
   \big\lgroup c_1, 0, c_3, 0, 0,0\big\rgroup\Phi_k \in{\rm span}\,\big\{\Xi_1, \Xi_2\big\},
\end{equation}
where the vectors $\Xi_1,\Xi_2$ are given by
\begin{equation}\label{ProP6}
    \begin{split}
        \Xi_1:=&\big\lgroup\frac{k+1}{2k+1},0,-\frac{k(k+1)}{2k+1},0,0,0\big\rgroup\Phi_k\in E_k,\\
        \Xi_2:=&\big\lgroup\frac{k}{2k+1},0,\frac{k(k+1)}{2k+1},0,0,0\big\rgroup\Phi_k\in E_{-k-1}. 
    \end{split}
\end{equation}

We now turn to the vector $\big\lgroup0,c_2,0,c_4,c_5,c_6\big\rgroup\Phi_k$. Notice that 
\begin{equation}\label{ProP6.1}
\big\lgroup0,c_2,0,c_4,c_5,c_6\big\rgroup\Phi_k\in{\rm span}\,\big\{\Xi_3,\Xi_4,\Xi_5,\Xi_6\big\},
\end{equation}
where the vectors $\Xi_j,j\in\{3,4,5,6\}$ are given by
\begin{equation}\label{Prop7}
    \begin{split}
    &  \Xi_3:=\big\lgroup0,-\frac{(k-2)(k+1)}{4k-2} , 0,  \frac{(k-1)(k-2)(k+1)}{4k-2} ,\frac{k+1}{4k-2} \\
    &\qquad\qquad\qquad\qquad\qquad\qquad\qquad,-1-\frac{(k-1)(k+1)}{4k-2}\big\rgroup\Phi_k\in E_{k-1},\\
&\Xi_4:=\big\lgroup 0,-k,0,  k(k+1),1, -k-1\big\rgroup\Phi_k\in E_{k+1},\\
&\Xi_5:=\big\lgroup0, \frac{(k+3)k}{4k+6},0, \frac{k(k+2)(k+3)}{4k+6}, \frac{k}{4k+6},-1+\frac{k(k+2)}{4k+6}\big\rgroup\Phi_k\in E_{-k-2},\\
&\Xi_6:=\big\lgroup0,k+1,0,k(k+1), 1,k\big\rgroup\Phi_k\in E_{-k},
    \end{split}
\end{equation}

By direct calculation, the matrix
\begin{equation}\label{S2e37}
\mathcal{M}_k:=\begin{pmatrix}
  -\frac{(k-2)(k+1)}{4k-2} &  \frac{(k-1)(k-2)(k+1)}{4k-2} & \frac{k+1}{4k-2} & -1-\frac{(k-1)(k+1)}{4k-2}\\
-k&  k(k+1)&1& -k-1\\
\frac{(k+3)k}{4k+6}& \frac{k(k+2)(k+3)}{4k+6}& \frac{k}{4k+6}&-1+\frac{k(k+2)}{4k+6}\\
k+1&k(k+1)& 1& k
\end{pmatrix}
\end{equation}
is non-degenerate for $k\in\Z\cap[0,\infty)$. Indeed, we have
$${\rm det}\,\mathcal{M}=-(2k+1)^2\neq0, \quad {\rm for}\,\,k\in\Z\cap[0,\infty).$$

If $k=0$, the only vector $\big\lgroup c_1,c_2,c_3,c_4,c_5,c_6\big\rgroup\Phi_0\in X$ is of the form $$\big\lgroup0,0,0,0,0,1\big\rgroup\Phi_0,$$ where $\Phi_0\in Y_0$. It is clear that this vector belongs to the eigenspace $E_{-2}$ of $L_0$. 

By direct computation, it is easy to check that (i) $\Xi_1,\Xi_2$ are orthogonal to $\Xi_3,\Xi_4,\Xi_5,\Xi_6$; (ii) $\Xi_1$ and $\Xi_2$ form an angle $\theta\in[\pi/2-\delta_0,\pi/2+\delta_0]$ for some $\delta_0\in(0,\pi/2)$ independent of $k\in\Z\cap[0,\infty)$; (iii) the vectors $\Xi_3,\Xi_4$ (and similarly $\Xi_5,\Xi_6$) are nearly parallel for large $k\gg1$, and they form an angle of size $\theta_k\approx \frac{1}{k+1}$; (iv) the vectors $\Xi_3$ and $\Xi_5$ form an angle $\theta'_k\in[\pi/2-\delta_1,\pi/2+\delta_1]$ for some $\delta_1\in(0,\pi/2)$ independent of $k\in\Z\cap[1,\infty)$. 

Therefore in view of the relations \eqref{ProP5}, \eqref{ProP6.1}, we conclude that (with $S_k:=\{-k-2,-k,k-1,k,k+1\}$) 
\begin{equation}\label{ProP8}
    \begin{split}
        &\|(\lambda-L_0)^{-1}\|_{X\to X}\\
        &\lesssim \sup_{ k\in\Z\cap[0,\infty), \Phi_k\in Y_k}\|(\lambda-L_0)^{-1}\|_{X_{\Phi_k}\to X}\\
        &\lesssim \sup_{k\in\Z\cap[0,\infty), \,\,j\in S_k}\bigg[\frac{k+1}{|\lambda-k+1||\lambda-k-1|}+\frac{k+1}{|\lambda+k+2||\lambda+k|}+\frac{1}{|\lambda-j|}\bigg].  
    \end{split}
\end{equation}
The desired bound \eqref{ProP2} then follows.
\end{proof}

We now turn to the property of the semigroups generated by $L_0$. Since the spectrum of $L_0$ is $\Z$, we need to consider the subspaces of $X$ corresponding to the positive and negative part of the spectrum, respectively. For applications below we introduce the following projection operators.
\begin{definition}\label{DEF2}
   For $S\subseteq \Z$, we define the linear operator $P_S$ as the spectral projection operator from $X$ to the subspace of $X$ corresponding to the spectrum $S$ of $L_0$. 
   
   More precisely, by linearity, it suffices to define $P_S\Xi$ for $\Xi\in X_{\Phi_k}$ for $k\in\Z\cap[0,\infty)$ and $\Phi_k\in Y_k$. Suppose that $\Xi\in X_{\Phi_k}$. Recalling \eqref{ProP6} and \eqref{Prop7}, we can represent $\Xi$ as 
   \begin{equation}\label{ProP9}
       \Xi=\sum_{j=1}^6c_j\Xi_j, \quad{\rm where}\,\,c_j\in\mathbb{C}. 
   \end{equation}
   Define the function $f:\Z\cap[1,6]\to \{-k-2,-k-1,-k,k-1,k,k+1\}$ such that $\Xi_j\in E_{f(j)}$ for $j\in\Z\cap[1,6]$, and then we can set
   \begin{equation}\label{ProP10}
       P_S\Xi:=\Sigma_{\{j\in\Z\cap[1,6]:\,f(j)\in S\}}\,c_j\Xi_j.
   \end{equation}
\end{definition}
We note that the operator $P_S$ may not be uniformly bounded for all $S\subseteq \Z$. However, we have the following lemma. 
\begin{lemma}\label{ProP11}
    Denote for $m\in\Z$, $P_{m}:=P_{\{m\}}$, $P_{\ge m}:=P_{\Z\cap[m,\infty)}$ and $P_{\leq m}:=P_{\Z\cap(-\infty,m]}$. Then we have the bounds for any $\Xi\in X$,
    \begin{equation}\label{ProP12}
        \|P_0\Xi\|_{X}+ \|P_1\Xi\|_{X}+\|P_{\ge 2}\Xi\|_{X}+\|P_{\leq -1}\Xi\|_{X}\lesssim\|\Xi\|_X. 
    \end{equation}
    
Moreover, the semigroups $e^{L_0s}P_{\ge2}$ for $s\leq0$ and $e^{L_0s}P_{\leq -1}$ for $s\ge0$ satisfy the bounds for any $\Xi\in X$,
\begin{equation}\label{ProP13}
    \begin{split}
        \|e^{L_0s}P_{\ge2}\Xi\|_{X}&\lesssim e^{2s}\|\Xi\|_X\quad{\rm for}\,\,s\leq0;\\
         \|e^{L_0s}P_{\leq -1}\Xi\|_{X}&\lesssim e^{-s}\|\Xi\|_X\quad{\rm for}\,\,s\ge0.
    \end{split}
\end{equation}
    
\end{lemma}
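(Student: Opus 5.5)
Since $X$ is the $X$-orthogonal direct sum of the blocks $X_{\Phi_k}$ (with $k\ge0$ and $\Phi_k$ running over an orthonormal basis of $Y_k$), and $L_0$ together with every $P_S$ preserves each block, I will reduce everything to one block. It suffices to prove \eqref{ProP12} and \eqref{ProP13} on a single $X_{\Phi_k}$, with constants uniform in $k$; summing the squares over the orthogonal blocks then gives the full bounds. The case $k=0$ is trivial: the block is one-dimensional and spanned by an element of $E_{-2}$. For $k\ge1$, write $\Xi=\sum_{j=1}^6 c_j\Xi_j$ as in \eqref{ProP9}. By fact (i) from the proof of Proposition \ref{ProP1}, $\{\Xi_1,\Xi_2\}$ is orthogonal to $\{\Xi_3,\dots,\Xi_6\}$, and by (ii) $\Xi_1,\Xi_2$ make an angle bounded away from $0$ and $\pi$. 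Hence $\|c_1\Xi_1\|+\|c_2\Xi_2\|\lesssim\|\Xi\|$ uniformly in $k$. All the difficulty sits in the four-dimensional part spanned by $\Xi_3\in E_{k-1}$, $\Xi_4\in E_{k+1}$, $\Xi_5\in E_{-k-2}$, $\Xi_6\in E_{-k}$.

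The key observation is which projections actually split the nearly parallel pairs. For $k\ge3$, the operators $P_{\ge2}$ and $P_{\le-1}$ keep the pair $\{\Xi_3,\Xi_4\}$ together and the pair $\{\Xi_5,\Xi_6\}$ together. Also $P_0$ and $P_1$ vanish on this part unless $k\le2$. So for $k\ge3$ it is enough to show that the projection of $\mathrm{span}\{\Xi_3,\dots,\Xi_6\}$ onto $\mathrm{span}\{\Xi_3,\Xi_4\}$ along $\mathrm{span}\{\Xi_5,\Xi_6\}$ is bounded uniformly in $k$. This is a statement about the angle between two two-dimensional subspaces, not about the angle between individual vectors. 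I would verify it by computing the minimal angle between the planes explicitly, using a normalized basis in the $X$ inner product. The component weights there are essentially $k$, $k$, $k^2$, $1$, $k^2$, $k$, coming from the $H^1$, $H^1$, $L^2$, $L^2$, $H^2$, $H^1$ norms of $\Phi_k$. Fact (iv) and the explicit determinant $\det\mathcal{M}_k=-(2k+1)^2$ suggest the planes stay transversal: I would check that the Gram determinant of the four normalized vectors, divided by the product of the Gram determinants of the two pairs, stays bounded below. This uniform transversality is the step I expect to be the main obstacle; it requires careful bookkeeping of the $k$-weights, and I would do it by leading-order expansion in $1/k$. The finitely many cases $k\le2$, where $P_0$ and $P_1$ do separate individual eigenvectors, are finite-dimensional with fixed $k$, so they are automatically bounded.

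For \eqref{ProP13}, on each block $e^{L_0 s}P_{\ge2}\Xi=\sum c_je^{f(j)s}\Xi_j$ over the $j$ with $f(j)\ge2$. For $k\ge3$ these are $c_3\Xi_3$, $c_4\Xi_4$ and possibly $c_1\Xi_1$ (when $f(1)=k\ge2$). For $s\le0$ I write
\[
e^{(k-1)s}c_3\Xi_3+e^{(k+1)s}c_4\Xi_4=e^{(k-1)s}\big(c_3\Xi_3+c_4\Xi_4\big)+\big(e^{(k+1)s}-e^{(k-1)s}\big)c_4\Xi_4 .
\]
The first term is bounded by $e^{(k-1)s}\|P_{\ge2}\Xi\|\lesssim e^{2s}\|\Xi\|$ when $k\ge3$. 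The second term is the delicate one. Nearly parallel vectors mean $\|c_4\Xi_4\|$ can be as large as $k\|\Xi\|$, but the prefactor satisfies $|e^{(k+1)s}-e^{(k-1)s}|\le 2|s|e^{(k-1)s}$. Since $k|s|e^{(k-3)s}\lesssim 1$, the second term is $\lesssim e^{2s}\|\Xi\|$ uniformly in $k$. Small $k$ is handled directly, and the $c_1\Xi_1$ term was already controlled in the first paragraph. The estimate for $P_{\le-1}$ with $s\ge0$ is symmetric, using the pair $\Xi_5,\Xi_6$ with eigenvalues $-k-2,-k\le-1$ and the term $\Xi_2$ with eigenvalue $-k-1$.
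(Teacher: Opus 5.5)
Your proposal is correct and takes the paper's route: reduce to the $L_0$-invariant, mutually orthogonal blocks $X_{\Phi_k}$, expand in $\Xi_1,\dots,\Xi_6$, and do finite-dimensional analysis. It also makes explicit the uniformity in $k$ that the paper's one-line proof leaves implicit: for $k\ge3$ the operators $P_{\ge2}$ and $P_{\le-1}$ never split the nearly parallel pairs $\{\Xi_3,\Xi_4\}$ and $\{\Xi_5,\Xi_6\}$, and your difference trick handles the semigroup. The plane transversality you flag does hold, and your two small slips are harmless: $\det\mathcal{M}_k$ alone does not give it, but in coordinates weighted by the $X$-norms of $\Phi_k$ in the slots $d^\ast\xi, d^\ast\xi', \vartheta, \vartheta^\ast$ the two planes converge to $\mathrm{span}\{(-1,1,1,-1),(1,-2,0,-1)\}$ and $\mathrm{span}\{(1,1,1,1),(1,2,0,-1)\}$, whose $4\times4$ determinant is $-16\neq0$; the weight in the slot $\ast d\xi'$ is $1$, not $k^2$ (irrelevant here), and $k|s|e^{(k-3)s}\lesssim1$ needs $k\ge4$, so $k=3$ (and $k=1$ in the $P_{\le-1}$ estimate) belong among the finitely many directly handled cases.
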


\begin{proof}
    It suffices to prove the desired bounds \eqref{ProP12}-\eqref{ProP13} for $\Xi\in Y_{\Phi_k}$, where $k\in\Z\cap[0,\infty), \Phi_k\in Y_k$. Recall that $X_{\Phi_k}$ is invariant under under $L_0$. By expressing $\Xi$ as the linear combination of $\Xi_j, j\in\Z\cap[1,6]$ the proof then follows from a finite dimensional analysis on $X_{\Phi_k}$. 
\end{proof}

%
\section{Spectral properties of $L$ and proof of Theorem \ref{MTh1}}
In this section we study the spectral property of $L$ and obtain bounds on its associated semigroups. As an application, we give the proof of our main Theorem \ref{MTh1}. 

Define the operator $K: \mathcal{D}\to X$ as
\begin{equation}\label{S31e1}
K\begin{pmatrix}
&\xi\\
&\xi'\\
&\vartheta\\
&\vartheta^\ast
\end{pmatrix}=\begin{pmatrix}
&0\\
&-{\bf V}^b\cdot\nabla\xi-\xi\cdot\nabla {\bf V}^b-F^b\xi'\\
&0\\
&-{\bf V}^b\cdot\nabla\vartheta-\xi\cdot\nabla F^b+2\xi\cdot {\bf V}^b+2\vartheta F^b-\vartheta^\ast F^b-qF^b
\end{pmatrix}
\end{equation}
where ${\rm div}\,\xi+\vartheta+\vartheta^\ast+q=0$. We note that $L=L_0+K$.
and that $K$ satisfies the bound for any $\Xi\in X$,
\begin{equation}\label{S31e1.001}
    \|K\Xi\|_{X}\lesssim|b|\|\Xi\|_{X}. 
\end{equation}

We have the following property for the resolvent of $L$. 
\begin{proposition}\label{ProPP1}
  There exists $\kappa\in(0,1)$ sufficiently small such that for $b\in\R^3$ with $|b|<\kappa$ the following statement holds. For any $\lambda\in\mathbb{C}$ satisfying 
    \begin{equation}\label{ProPP1.1}
        \min_{k\in\Z}\{|\lambda-k|\}\ge 1/9, \quad 100(1+|\Re \lambda|)^{7/8}\ge|\Im \lambda|\ge \frac{1}{100}(1+|\Re \lambda|)^{7/8},
    \end{equation}
 we have $\lambda\not\in\sigma(L)$ and the bounds 
    \begin{equation}\label{ProPP2}
        \|(\lambda-L)^{-1}\|_{X\to X}\lesssim (1+|\lambda|)^{-3/4}. 
    \end{equation}
    
\end{proposition}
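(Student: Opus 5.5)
The plan is to treat $L=L_0+K$ as a small perturbation of $L_0$ and invert $\lambda-L$ by a Neumann series, using the resolvent bound of Proposition \ref{ProP1} together with the smallness bound \eqref{S31e1.001} on $K$. For $\lambda$ as in \eqref{ProPP1.1}, $\lambda\notin\Z$ and $\min_k|\lambda-k|\ge1/9$, so $(\lambda-L_0)^{-1}:X\to\mathcal D$ exists. We factor
\begin{equation*}
\lambda-L=(\lambda-L_0)\big(I-(\lambda-L_0)^{-1}K\big)
\end{equation*}
on $\mathcal D$. The goal is to show $\|(\lambda-L_0)^{-1}K\|_{\mathcal D\to\mathcal D}\le 1/2$, or equivalently that $\|K(\lambda-L_0)^{-1}\|_{X\to X}\le1/2$, which allows the factorization $\lambda-L=(I-K(\lambda-L_0)^{-1})(\lambda-L_0)$.

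\emph{Step 1.} Evaluate the bound \eqref{ProP2} in the region \eqref{ProPP1.1}. There $|\Im\lambda|\approx(1+|\Re\lambda|)^{7/8}$, so
\begin{equation*}
\frac{1}{1+|\Im\lambda|}\lesssim(1+|\Re\lambda|)^{-7/8},\qquad \frac{1+|\Re\lambda|}{(1+|\Im\lambda|)^2}\lesssim(1+|\Re\lambda|)^{1-7/4}=(1+|\Re\lambda|)^{-3/4}.
\end{equation*}
Since $|\Im\lambda|\lesssim(1+|\Re\lambda|)^{7/8}$, we have $1+|\lambda|\approx1+|\Re\lambda|$. Together these give
\begin{equation*}
\|(\lambda-L_0)^{-1}\|_{X\to X}\lesssim(1+|\lambda|)^{-3/4},
\end{equation*}
and in particular this quantity is bounded uniformly in $\lambda$ over the region.

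\emph{Step 2.} Bound $K(\lambda-L_0)^{-1}$ on $X$. Here I expect the main (mild) obstacle: \eqref{S31e1.001} is stated with $X$ norms, whereas $K$ contains first derivatives, namely ${\bf V}^b\cdot\nabla\xi$, ${\bf V}^b\cdot\nabla\vartheta$, and $q=-{\rm div}\,\xi-\vartheta-\vartheta^\ast$. I will check componentwise that for $\Xi\in X$ (with $\xi\in H^2$, $\xi'\in H^1$, $\vartheta\in H^2$, $\vartheta^\ast\in H^1$) each entry of $K\Xi$ lies in the correct slot: the second and fourth entries must be in $H^1$. This holds because $\nabla\xi$, $\nabla\vartheta$, ${\rm div}\,\xi$, $\xi'$, $\vartheta^\ast$ are all in $H^1$, and ${\bf V}^b,F^b$ are smooth with $C^k$ norms $\lesssim\epsilon\approx|b|$. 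Hence $K$ is bounded on $X$ with norm $\lesssim|b|$, as claimed in \eqref{S31e1.001}. Combining with Step 1,
\begin{equation*}
\|K(\lambda-L_0)^{-1}\|_{X\to X}\le C|b|\le C\kappa\le\tfrac12
\end{equation*}
for $\kappa$ small, with $C$ independent of $\lambda$ in the region.

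\emph{Step 3.} Invert by Neumann series. $I-K(\lambda-L_0)^{-1}$ is invertible on $X$ with inverse of norm $\le2$. Therefore $\lambda-L$ is bijective from $\mathcal D$ onto $X$, with
\begin{equation*}
(\lambda-L)^{-1}=(\lambda-L_0)^{-1}\big(I-K(\lambda-L_0)^{-1}\big)^{-1}.
\end{equation*}
Consequently $\lambda\notin\sigma(L)$ and
\begin{equation*}
\|(\lambda-L)^{-1}\|_{X\to X}\le2\|(\lambda-L_0)^{-1}\|_{X\to X}\lesssim(1+|\lambda|)^{-3/4}.
\end{equation*}
Bounded values of $\lambda$ in the region are covered by the same uniform estimate, so no separate case is needed.
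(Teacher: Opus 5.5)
Your proposal is correct and follows essentially the paper's proof. Both invert $\lambda-L=\lambda-L_0-K$ by a Neumann series, using \eqref{ProP2} (which in the region \eqref{ProPP1.1} gives the uniform bound $\|(\lambda-L_0)^{-1}\|_{X\to X}\lesssim(1+|\lambda|)^{-3/4}$) together with $\|K\|_{X\to X}\lesssim|b|$. The differences are cosmetic: you factor as $(I-K(\lambda-L_0)^{-1})(\lambda-L_0)$ instead of the paper's $(\lambda-L_0)(I-(\lambda-L_0)^{-1}K)$, and you write out the evaluation of \eqref{ProP2} and the boundedness of $K$ on $X$, which the paper leaves implicit.
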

\begin{proof}
    Using the (formal) resolvent identity
    \begin{equation}\label{ProPP3}
        (\lambda-L)^{-1}=\Big[I-(\lambda-L_0)^{-1}K\Big]^{-1}(\lambda-L_0)^{-1},
    \end{equation}
    The desired bounds \eqref{ProPP2} then follow from the bounds \eqref{ProP2} and the inequality 
    \begin{equation}\label{ProPP4}
        \|K\|_{X\to X}\lesssim |b|. 
    \end{equation}
\end{proof}

We now turn to the semigroups generated by $L$. In contrast to the case of $L_0$, these semigroups are no longer explicitly given and needed to be defined more abstractly using spectral representation formulae. 

Define the contours $\Gamma_+$ and $\Gamma_-$ as
\begin{equation}\label{ProPP5}
    \begin{split}
        \Gamma_+:=&\Big\{\lambda\in\mathbb{C}:\,\Re\lambda\ge3/2, \,\,|\Im\lambda|=(\Re\lambda-3/2)^{7/8}\Big\},\\
        \Gamma_-:=&\Big\{\lambda\in\mathbb{C}:\,\Re\lambda\leq -1/2,\,\, |\Im\lambda|=(-\Re\lambda-1/2)^{7/8}\Big\}. 
    \end{split}
\end{equation}
Then we can define the operator for $\Xi\in X$,
\begin{equation}\label{ProPP6}
    \begin{split}
        e^{Ls}P^\ast_+\,\Xi:=&\frac{1}{2\pi i}\int_{\Gamma_+}e^{\lambda s}(\lambda-L)^{-1}\Xi\,d\lambda,\quad{\rm for}\,\,s\leq0;\\
       e^{Ls} P^\ast_j\,\Xi:=&\frac{1}{2\pi i}\int_{|\lambda-j|=1/2}e^{\lambda s}(\lambda-L)^{-1}\Xi \,d\lambda, \quad j\in\{0,1\},\quad {\rm for}\,\,s\in\R;
    \end{split}
\end{equation}
and define for $s\ge0$ and $\Xi\in X$,
\begin{equation}\label{ProPP7}
    \begin{split}
        e^{Ls}P^\ast_-\,\Xi:=&\frac{1}{2\pi i}\int_{\Gamma_-}e^{\lambda s}(\lambda-L)^{-1}\Xi\,d\lambda.
    \end{split}
\end{equation}

We have the following bounds on these semigroups. 
\begin{proposition}\label{ProPP8}
    For $s\leq 0$ and $\Xi\in X$ we have the bound 
    \begin{equation}\label{ProPP9}
        \|e^{Ls}P^\ast_+\,\Xi\|_{X}\lesssim e^{7s/4}\|\Xi\|_X,
    \end{equation}
    and for $s\ge 0$ and $\Xi\in X$ we have 
    \begin{equation}\label{ProPP10}
        \|e^{Ls}P^\ast_-\,\Xi\|_{X}\lesssim e^{-3s/4}\|\Xi\|_X.
    \end{equation}
    In addition, for any $\sigma,\sigma'\in\{+,-,1,2\}$ denoting $\delta(\sigma\sigma')$ as the standard Kronecker's delta symbol, we have the identities
    \begin{equation}\label{ProPP11}
        P^\ast_\sigma P^\ast_{\sigma'}=\delta(\sigma\sigma')P^\ast_{\sigma},\quad I=P^\ast_++P^\ast_-+P^\ast_0+P^\ast_1.
    \end{equation}
\end{proposition}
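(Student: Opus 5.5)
The plan is to obtain all three statements from the contour definitions \eqref{ProPP6}--\eqref{ProPP7}, using the resolvent bound \eqref{ProPP2} of Proposition \ref{ProPP1} on the contours and standard Riesz--Dunford calculus for the identities.

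\textbf{The bound \eqref{ProPP9}.} First I check that $\Gamma_+$ lies in the region \eqref{ProPP1.1}, at least for large $|\lambda|$. On $\Gamma_+$ we have $|\Im\lambda|=(\Re\lambda-3/2)^{7/8}$, which is comparable to $(1+|\Re\lambda|)^{7/8}$ once $\Re\lambda$ is large. Near the vertex $\lambda=3/2$ the curve stays at distance at least $1/2\ge1/9$ from $\Z$.

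The bounded piece of the contour needs separate treatment. I will either slightly modify $\Gamma_+$ there, or argue directly that $\|(\lambda-L_0)^{-1}\|$ is uniformly bounded on a compact set at distance $\ge1/9$ from $\Z$. Then the Neumann series in \eqref{ProPP3} converges for $|b|<\kappa$, which gives boundedness of $(\lambda-L)^{-1}$ there.

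For $s\le0$, write $\lambda=\mu+i\nu$ on $\Gamma_+$, so $|e^{\lambda s}|=e^{\mu s}$. Parametrize by $\mu\ge3/2$; then $|d\lambda|\lesssim(1+|\mu-3/2|^{-1/8})\,d\mu$, which is integrable near $\mu=3/2$. This gives
\[
\|e^{Ls}P_+^\ast\Xi\|_X\lesssim \int_{3/2}^\infty e^{\mu s}(1+\mu)^{-3/4}(1+|\mu-3/2|^{-1/8})\,d\mu\,\|\Xi\|_X .
\]
For $s\le -1$ this is $\lesssim e^{3s/2}$, up to a factor $|s|^{-1/4}$ that only helps, while for $-1\le s\le0$ the integral is borderline.

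\textbf{Main obstacle.} The claimed exponent $7/4$ exceeds the vertex value $3/2$, so integrating in norm along $\Gamma_+$ cannot give it directly. I would handle this by deforming the contour to a vertex at $7/4$. This is legitimate because $\sigma(L)$ is a small perturbation of $\sigma(L_0)=\Z$, so no spectrum lies in the strip $3/2\le\Re\lambda\le7/4$ away from $2$ and the resolvent there is analytic.

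The small-$|s|$ behaviour (where $e^{\lambda s}$ gives no decay) is harder, and I will treat it as in the sectorial-type argument. Write $(\lambda-L)^{-1}=\lambda^{-1}+\lambda^{-1}L(\lambda-L)^{-1}$ on a dense domain. The $\lambda^{-1}$ term integrates to zero or to a bounded quantity by closing the contour. The remainder then has an extra factor $|\lambda|^{-1}$ times something bounded by $|\lambda|^{1/4}$, which together with $|e^{\lambda s}|$ yields an absolutely convergent integral, bounded uniformly in $s\in[-1,0]$. Alternatively, I would compare with $e^{L_0s}P_{\ge2}$ from Lemma \ref{ProP11} via the Duhamel formula, treating $K$ as a bounded perturbation of size $|b|$. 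The bound \eqref{ProPP10} for $e^{Ls}P^\ast_-$ with $s\ge0$ is entirely symmetric, using $\Gamma_-$ and the vertex $-1/2$, shifted to $-3/4$.

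\textbf{The identities \eqref{ProPP11}.} I interpret $P^\ast_\sigma$ as the $s=0$ values, defined as strong limits $s\to0$. The projection property $P^\ast_\sigma P^\ast_{\sigma'}=\delta(\sigma\sigma')P^\ast_\sigma$ follows from the first resolvent identity $(\lambda-L)^{-1}(\mu-L)^{-1}=\big[(\mu-L)^{-1}-(\lambda-L)^{-1}\big]/(\lambda-\mu)$ together with Fubini, applied to two slightly displaced contours, exactly as in the finite-dimensional Riesz projection argument. For the resolution of identity, I close $\Gamma_+\cup\Gamma_-\cup\{|\lambda|=1/2\}\cup\{|\lambda-1|=1/2\}$ into large contours and use $(\lambda-L)^{-1}\Xi=\lambda^{-1}\Xi+O(|\lambda|^{-1-1/4})$ on a dense set. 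These large contours enclose the entire spectrum, so the total integral equals $\Xi$. The arcs at infinity contribute nothing because of the $|\lambda|^{-3/4}$ decay combined with the $\lambda^{-1}$ expansion, and the result extends to all of $X$ by density.
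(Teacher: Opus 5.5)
There is a genuine gap, at exactly the step you flag as the main obstacle. You need a bound on $e^{Ls}P^\ast_\pm$ that stays uniform as $s\to0$; in particular, you need boundedness of $P^\ast_\pm$ on $X$. Your sectorial-type fix does not provide it. In operator norm, $\lambda^{-1}L(\lambda-L)^{-1}=(\lambda-L)^{-1}-\lambda^{-1}$. So your estimate $|\lambda|^{-1}\cdot|\lambda|^{1/4}=|\lambda|^{-3/4}$ is just the decay you started from, and $\int_{\Gamma_+}e^{s\Re\lambda}|\lambda|^{-3/4}\,|d\lambda|\sim|s|^{-1/4}$ still blows up as $s\to0^-$. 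A real gain appears only in the form $\lambda^{-1}(\lambda-L)^{-1}L\Xi$ with $\Xi\in\mathcal{D}$. That bounds the integral by $\|\Xi\|_X+\|L\Xi\|_X$, which cannot be extended by density to a bound by $\|\Xi\|_X$. More fundamentally, \eqref{ProPP2} by itself does not control these integrals at $s=0$. The unperturbed resolvent $(\lambda-L_0)^{-1}$ obeys the same $(1+|\lambda|)^{-3/4}$ bound on the contours. Yet the boundedness of $P_{\ge2}$ and $P_{\le-1}$ in Lemma~\ref{ProP11} comes from the explicit finite-dimensional analysis on each $X_{\Phi_k}$, not from a resolvent estimate: each of these projections keeps together the nearly parallel pairs $\Xi_3,\Xi_4$ and $\Xi_5,\Xi_6$ of \eqref{Prop7}.

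The missing idea is the paper's, and your Duhamel alternative points toward it without carrying it out. Expand $(\lambda-L)^{-1}$ in the Neumann series \eqref{ProPP12} and treat the terms separately.
\begin{itemize}
\item The $m=0$ term integrates along $\Gamma_+$ (resp.\ $\Gamma_-$) to $e^{L_0s}P_{\ge2}$ (resp.\ $e^{L_0s}P_{\le-1}$). By \eqref{ProP13} this is bounded by $e^{2s}$ (resp.\ $e^{-s}$), uniformly down to $s=0$.
\item Each term with $m\ge1$ has norm $\lesssim(C|b|)^m(1+|\lambda|)^{-3/2}$, which is integrable along the contour. These integrals therefore converge absolutely and uniformly in $s$.
\item Only for the $m\ge1$ terms do you shift the vertex to $7/4$ (resp.\ $-3/4$). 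No integer is crossed, and you get $\lesssim|b|e^{7s/4}$ (resp.\ $|b|e^{-3s/4}$). Working at the resolvent level also avoids having to define $e^{L_0s}$ on all of $X$, which fails for either sign of $s$.
\end{itemize}

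The identities \eqref{ProPP11} inherit the gap. Both the strong limits $s\to0$ and the density extension need this uniform bound. When closing contours, you also should not use large curves that ``enclose the entire spectrum.'' The spectrum $\sigma(L)$ is unbounded inside $\Gamma_\pm$, and near the real axis Proposition~\ref{ProP1} only bounds $(\lambda-L_0)^{-1}$ by $1+|\Re\lambda|$, so the Neumann series for $(\lambda-L)^{-1}$ is unavailable there. Instead, close through the region outside $\Gamma_\pm$ and the two discs, where the $(1+|\lambda|)^{-3/4}$ bound persists, and argue termwise. The $m=0$ parts sum to $P_{\ge2}+P_{\le-1}+P_0+P_1=I$, and the $m\ge1$ parts sum to zero by Cauchy's theorem.
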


\begin{proof}
   We use the resovlent identity \eqref{ProPP3} which can be expanded in power series since $K$ has a small norm
    \begin{equation}\label{ProPP12}
        (\lambda-L)^{-1}=\sum_{m=0}^{\infty}\Big[(\lambda-L_0)^{-1}K\Big]^m(\lambda-L_0)^{-1}. 
    \end{equation}
    We note that in the formulae \eqref{ProPP6}-\eqref{ProPP7}, the first term of \eqref{ProPP12} with $m=0$ corresponds to semigroups associated with $L_0$ for which the desired bounds \eqref{ProPP9}-\eqref{ProPP10} follow from \eqref{ProP13}. For the contributions from terms with $m\ge1$, we note that 
    \begin{equation}\label{ProPP13}
        \left\|\Big[(\lambda-L_0)^{-1}K\Big]^m(\lambda-L_0)^{-1}\right\|_{X\to X}\leq (C|b|)^m(1+|\lambda|)^{-3/4-3m/4},
    \end{equation}
    and the desired bounds \eqref{ProPP9}-\eqref{ProPP10} then follow from a contour shift. 
    
\end{proof}

It remains to treat the semigroups $e^{Ls}P^\ast_0$ with $s\in\R$ and $e^{Ls}P^\ast_1$ with $s\leq0$. 
\begin{definition}\label{defsep}
  Fix $\gamma\in(0,1)$.  Let $\Sigma\subset X$ be a set of $J$ vectors. $\Sigma$ is called $\gamma$-separated if for any  any $c_j\in\mathbb{C}$  and $\Xi_j\in \Sigma$ with $1\leq j\leq J\in \Z\cap[1,\infty)$, we have
    \begin{equation}\label{mprop6}
    \gamma\,\sum_{j=1}^J|c_j|\|\Xi_j\|_X  \leq  \Big\|\sum_{j=1}^Jc_j\Xi_j\Big\|_X\leq  \sum_{j=1}^J|c_j|\|\Xi_j\|_X.
    \end{equation}
    \end{definition}

The main properties we need are summarized as follows. 
\begin{proposition}\label{mprop1}
    There exist small constants $\kappa, \gamma\in(0,1)$ such that the following statement holds for $b\in\R^3$ with $0\leq|b|<\kappa$. Recall the relation $|b|=f(\epsilon)$ from \eqref{Int3.3}. 
    
    (i) $\lambda=0$ is an eigenvalue of $L$ with the corresponding eigenspace $P^\ast_0X$ satisfying ${\rm dim}\,P^\ast_0 X=3$;

    (ii) There exist subspaces ($E_b=E_b^1\oplus E_b^2\oplus E_b^3$)
    \begin{equation}\label{mprop1.01}
    \begin{split}
    E_b:=&{\rm span}\,\Big\{\Xi_{b,j}:\,j\in\Z\cap[1,8]\Big\},\quad E_b^1:={\rm span}\,\Big\{\Xi_{b,j}:\,j\in\Z\cap[1,4]\Big\},\\
    E_b^2:=&{\rm span}\,\Big\{\Xi_{b,j}:\,j\in\Z\cap[5,6]\Big\}, \quad F_b^3:={\rm span}\,\Big\{\Xi_{b,j}:\,j\in\Z\cap[7,8]\Big\},
    \end{split}
    \end{equation}
    which are invariant under $L$. The set 
    \begin{equation}\label{mprop1.02}
    \Sigma:=\Big\{\Xi_{b,j}:\,j\in\Z\cap[1,8]\Big\}
    \end{equation}
    is $\gamma$-separated, and the $8\times 8$ matrix given by the restriction $L|_{E_b}$ of $L$ to $E_b$ with respect to $\Sigma$ has several eigenvalues. 
    
    (iii) With respect to the base $\Sigma$, we have the representations
    \begin{enumerate}
        \item 
    
    \begin{equation}\label{mprop3}
        L|_{E^1_b}\sim I=\begin{pmatrix}
            1&0&0&0\\
            0&1&0&0\\
            0&0&1&0\\
            0&0&0&1
        \end{pmatrix};
        \end{equation}

        \item 

        \begin{equation}\label{mprop4}
             L|_{E^2_b}\sim \begin{pmatrix}
           \alpha_2&-\beta_2\\
           \beta_2&\alpha_2
        \end{pmatrix},
        \end{equation}
    where $\alpha_2=1+\frac{1}{15}\epsilon^2+O(\epsilon^3)$, $\beta_2=O(\epsilon^3)$;

    \item 
      \begin{equation}\label{mprop5}
             L|_{E^3_b}\sim \begin{pmatrix}
           \alpha_3&-\beta_3\\
           \beta_3&\alpha_3
        \end{pmatrix},
        \end{equation}
    where $\alpha_3=1+\frac{4}{15}\epsilon^2+O(\epsilon^3)$, $\beta_3=O(\epsilon^3)$.
    \end{enumerate}

    \item

\end{proposition}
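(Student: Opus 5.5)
Since $\|K\|_{X\to X}\lesssim|b|$ by \eqref{S31e1.001}, the plan is analytic perturbation theory for the isolated eigenvalues $0$ and $1$ of $L_0$. By Proposition \ref{ProPP1} and the power series \eqref{ProPP12}, the Riesz projections $P^\ast_j=\frac{1}{2\pi i}\oint_{|\lambda-j|=1/2}(\lambda-L)^{-1}d\lambda$ are analytic in $b$. They are close to $P_j$, which is bounded by Lemma \ref{ProP11}. Hence $\dim P^\ast_0X=\dim E_0=3$ and $\dim P^\ast_1X=\dim E_1=8$ by Remark \ref{remark4.1}. Set $E_b:=P^\ast_1X$. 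Taking images under $P^\ast_1$ of a well-conditioned basis of $E_1$ keeps the set $\gamma$-separated with a uniform $\gamma$ for small $|b|$, and this remains true after the change of basis within $E_b$ described below.

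\textbf{Part (i).} I would exhibit three exact eigenvectors with eigenvalue $0$ and then use the dimension count. The family $U^{b'}$, $b'\in\R^3$, consists of $r$-independent solutions of \eqref{S1e2}. Differentiating $(V^{b'},F^{b'})$ in $b'$ at $b'=b$ gives solutions of the linearized system \eqref{S1e3} that are independent of $s$, i.e.\ elements of $\ker L$. Since $b'\mapsto U^{b'}$ is a smooth immersion, these three derivatives are linearly independent. Together with $\dim P^\ast_0X=3$, this gives $P^\ast_0X=\ker L$. In particular the restriction is diagonalizable, because its generalized eigenspace is already spanned by eigenvectors.

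\textbf{Exact eigenvalue $1$.} Spatial translations $x\mapsto x+h$ act on $U^b$, and $\partial_h U^b(x+h)=h\cdot\nabla U^b$ is $-2$-homogeneous. In the $s=-\log r$ variables it therefore corresponds to $e^{s}\Phi(y)$, i.e.\ an eigenvector of $L$ with eigenvalue $1$; this gives three independent vectors. For the fourth I would use the pure swirl ansatz indicated in the introduction, namely an axisymmetric field $r^{-2}g(\theta)e_\phi$, and solve the linearized equation around $U^b$ (for $b$ along $e_3$) for $g$. This reduces to an ODE in $\theta$ that is solvable explicitly, because swirl decouples at linear order from the meridional components, using the structure of $V^b,F^b$. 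These four vectors span $E^1_b$, and $L|_{E^1_b}=I$.

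\textbf{Remaining four directions.} Here I would use axial symmetry. $L$ commutes with rotations about $b$, so $E_b$ splits into Fourier modes $e^{im\phi}$. The four exact eigenvectors use the modes $m=0,\pm1$, and the complement consists of $m=\pm1$ and $m=\pm2$ pieces coming from $Y_2$ and $Y_1$. Each pair $m=\pm k$ is invariant, and after passing to the real basis $\cos m\phi,\sin m\phi$ the rotation by $\pi/(2m)$ intertwines the two. Hence $L$ restricted to each pair is a $2\times2$ real matrix commuting with a rotation, so it has the form $\begin{pmatrix}\alpha&-\beta\\ \beta&\alpha\end{pmatrix}$. This gives $E^2_b,E^3_b$.

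\textbf{Main obstacle: computing $\alpha_2,\alpha_3$ to order $\epsilon^2$.} Write $E_b$ as a graph over $E_1$, i.e.\ as the range of $I+T$ with $T:E_1\to P_{\ne1}X$, and solve the invariance equation $P_{\ne1}(L_0+K)(I+T)=T\,P_1(L_0+K)(I+T)$ order by order in $\epsilon$. This yields the effective matrix
\[
P_1K P_1+P_1K(1-L_0)^{-1}P_{\ne1}KP_1+O(\epsilon^3).
\]
The first-order term should vanish on the relevant modes by parity in $\theta$, since $V^b,F^b$ at order $\epsilon$ are $P_1$-type harmonics coupling $Y_k$ only to $Y_{k\pm1}$. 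The second-order term requires explicit Legendre computations using \eqref{S31e6}--\eqref{S31e7}, the order-$\epsilon^2$ expansion of \eqref{Int3.3}, and the eigenbasis \eqref{ProP6}--\eqref{Prop7} to invert $1-L_0$ on the finitely many modes $k\le 4$ that are reached. I expect this bookkeeping to be the hardest step; it should produce $\tfrac1{15}\epsilon^2$ and $\tfrac4{15}\epsilon^2$. The $\beta$'s are $O(\epsilon^3)$ because the second-order effective matrix turns out to be symmetric on each pair.
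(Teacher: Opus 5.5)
Your skeleton is the paper's: Riesz projections with the dimension count, $\partial_bU^b$ for the kernel, translations plus a pure-swirl field for the exact eigenvalue $1$, reduction to $\phi$-modes, and the effective matrix $A+B(I-\Lambda)^{-1}C$ of Proposition~\ref{GFL13.1}, with the first order removed by the $\epsilon\mapsto-\epsilon$ symmetry. The gap is that the two non-formal inputs are asserted rather than established.

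The first is the fourth eigenvector. Decoupling of the swirl only reduces the problem to the second-order ODE \eqref{GFL27}. At each pole this ODE has one regular and one singular solution, so nothing you say forces a solution regular at \emph{both} poles for the particular value $\lambda=1$. Perturbation theory together with $L^{-\epsilon}=TL^{\epsilon}T$ only yields $1+O(\epsilon^4)$, which does not give $L|_{E^1_b}=I$. The paper settles this by exhibiting $g(t)=c(1-\epsilon t)^{-2}$, i.e.\ the field $\frac{\sin\theta}{r^2(1-\epsilon\cos\theta)^2}e_\phi$. There is also a structural reason you could cite. For $u=\Gamma\rho^{-1}e_\phi$ with $\rho=r\sin\theta$, the linearized equation is the angular-momentum balance $\mathrm{div}\big(\Gamma U^b-\nabla\Gamma+2\Gamma\rho^{-1}e_\rho\big)=0$. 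For $\Gamma=r^{-1}h(\theta)$ this flux is $-2$-homogeneous, so $\sin\theta$ times its $e_\theta$-component is constant, hence zero by regularity at the poles. This leaves the first-order equation $h'=(V^b+2\cot\theta)h$, so $h\propto\sin^2\theta\,(1-\epsilon\cos\theta)^{-2}$.

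The second is the coefficients. The values $\frac1{15}$ and $\frac4{15}$, and which modes carry which, are read off the statement; ``it should produce'' is exactly where Sections~\ref{sec:m0}--\ref{sec:comp3} begin. In mode $m=1$ you also need the leading $2\times2$ matrix to have distinct eigenvalues, since that is what keeps the two eigenvectors in $P^\ast_1X\cap X^1$ uniformly apart. Your claim that $\gamma$-separation survives the change of basis rests on this. The paper halves the work here. The $E_1$-component of the exact eigenvector $(\partial_1+i\partial_2)U^\epsilon$ is $\epsilon(2i\Xi_1-6\Xi_1')+O(\epsilon^3)$, so $\mathcal{O}=A+B(I-\Lambda)^{-1}C$ satisfies $\mathcal{O}(2i\Xi_1-6\Xi_1')=O(\epsilon^3)$. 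Hence only $\mathcal{O}\Xi_1=-\frac i5\epsilon^2\Xi_1'$ must be computed. This gives $\mathcal{O}\Xi_1'=\frac1{15}\epsilon^2\Xi_1'$, with limiting eigenvectors $\Xi_1+3i\Xi_1'$ and $\Xi_1'$.

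Two smaller points. First, the first-order term vanishes by $T$-parity: each $E_1^{(m)}$ has parity $(-1)^m$ once stream functions are counted as pseudo-scalars, and $K$ is $T$-odd at order $\epsilon$. A $Y_k\to Y_{k\pm1}$ rule does not capture this, because it fails across the gradient/curl split: the $\xi'$-slot of $K\Xi_1$ contains $\nabla(-2i\epsilon Z_1)$ while $\xi=\nabla^\perp Z_1$. Second, the $\beta$'s need no computation. $S$ commutes with $L$ and maps $X^m$ to $X^{-m}$, and so does complex conjugation. Their composition is therefore an antilinear involution of $X^m$ commuting with $L$ and $P^\ast_1$, so the spectrum of $L$ on $P^\ast_1X\cap X^m$ is closed under conjugation. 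That spectrum is $\{\lambda\}$ for $m=2$ and $\{1,\lambda\}$ for $m=1$, so $\lambda$ is real and in fact $\beta_2=\beta_3=0$ exactly. The paper records this only as a numerical observation.
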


\begin{remark}\label{rem1}
    There are three obvious vectors in $E^1_b$: those generated by $\partial_1U^b, \partial_2U^b$ and $\partial_3U^b$ through \eqref{S1e4}, where $U^b$ is the corresponding Landau solution to \eqref{Int3}. 
    
    The fourth vector is more nontrivial and is given by a ``pure-swirl" field of the form $C\frac{\sin\theta}{(1-\epsilon \cos\theta)^2}e_\phi$, assuming (without loss of generality) that $b=f(\epsilon) \begin{pmatrix}
        0\\0\\1
    \end{pmatrix}$ as in \eqref{Int3.3} and using the spherical coordinate \eqref{Int3.1}-\eqref{Int3.10}. See section \ref{sec:m0} for details.

    In particular, if we restrict our attention to the axi-symmetric fields (not necessarily with zero swirl), the original eigenspace $E_1^{\rm axisymm}$ is two dimensional, and the corresponding eigenvalue $\lambda=1$ does not move. It stays $\lambda_b=1$ for all $|b|<1$. (The space $E_b^{\rm axisymm}$ can move with $b$.)

    We note that there is a slight abuse of notation in the sense that as $b\to0$, $E_b$ converges to $E_1$ rather than $E_0$. This choice is motivated by the perturbation argument below. 
\end{remark}

\begin{remark}\label{rem2}
The proof of (i) in Proposition \ref{mprop1} is relatively easy. Let $U^b$ be the Landau solution to \eqref{Int3} which is $-1$-homogeneous. For any vector $c=(c_1,c_2,c_3)\in\R^3$, $\varphi:=c\cdot\nabla_bU^b$ satisfies for $x\in\R^3$,
\begin{equation}\label{comp1.1}
\begin{split}
    -\Delta\varphi+\varphi\cdot \nabla U^b+U^b\cdot\nabla\varphi+\nabla p&=c\,\delta(x),\\
    {\rm div}\,\varphi=0. 
    \end{split}
\end{equation}
We conclude that $c\cdot\nabla_bU^b\not\equiv0$ unless $c=0\in\R^3$, and consequently $\partial_{b_1}U^b, \partial_{b_2}U^b$ and $\partial_{b_3}U^b$ are linearly independent. We can form three linearly independent vectors $\Xi^\ast_{b,1}, \Xi^\ast_{b,2}, \Xi^\ast_{b,3}$ in $X$ using the procedure \eqref{Pe3}-\eqref{Pe6}. Since $\partial_{b_j}U^b(x)$ with $j\in\{1,2,3\}$ are $-2$-homogeneous, these vectors belong to $P_0^\ast X$ which is three dimensional. Therefore $P_0^\ast X={\rm span}\Big\{\Xi^\ast_{b,1}, \Xi^\ast_{b,2}, \Xi^\ast_{b,3}\Big\}$. 
\end{remark}

The proof of (ii)-(iii) in Proposition \ref{mprop1} is the heart of our analysis and requires relatively precise computations. We will give the detailed proof in sections \ref{sec:comp1} - \ref{sec:comp3} below, and provide more details on the invariant spaces $E^1_b, E^2_b, E^3_b$. 

As a corollary to Proposition \ref{mprop1}, we have the following conclusions. 
\begin{proposition}\label{mprop7}
    Under the same assumptions and notations as in Proposition \ref{mprop1}, we have for $s\leq0$,
    \begin{equation}\label{mprop8}
    \begin{split}
        &e^{Ls}P^\ast_1|_{E^1_b}\sim e^{s}I,\\
        &e^{Ls}P^\ast_1|_{E^2_b}\sim e^{\alpha_2s}\begin{pmatrix}
            \cos (\beta_2s)&-\sin(\beta_2s)\\
            \sin(\beta_2s)&\cos(\beta_2s)
        \end{pmatrix}, \\
        &e^{Ls}P^\ast_1|_{E^3_b}\sim e^{\alpha_3s}\begin{pmatrix}
            \cos (\beta_3s)&-\sin(\beta_3s)\\
            \sin(\beta_3s)&\cos(\beta_3s)
        \end{pmatrix}.
        \end{split}
    \end{equation}
\end{proposition}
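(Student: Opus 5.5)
The plan is to reduce the statement to the finite-dimensional matrix representations in Proposition \ref{mprop1}(iii). First we identify $P^\ast_1X$ with $E_b$. By Proposition \ref{ProPP8}, $P^\ast_1$ is the Riesz projection for the part of $\sigma(L)$ inside $|\lambda-1|=1/2$. Since $\|K\|_{X\to X}\lesssim|b|$ and the resolvent of $L_0$ on that circle is bounded (Proposition \ref{ProP1} applies away from the integers, and on the circle the bound is uniform), standard perturbation theory gives $\|P^\ast_1-P_1\|\lesssim|b|$. Hence $\dim P^\ast_1X=\dim E_1=8$ by Remark \ref{remark4.1}.

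Next we locate $E_b$ inside $P^\ast_1X$. The space $E_b$ is $L$-invariant, and by (iii) the spectrum of $L|_{E_b}$ is $\{1,\alpha_2\pm i\beta_2,\alpha_3\pm i\beta_3\}$, which lies inside $|\lambda-1|<1/2$ for small $\epsilon$. Writing $\Xi\in E_b$ in terms of the resolvent restricted to $E_b$, the Cauchy integral gives $P^\ast_1\Xi=\Xi$, so $E_b\subseteq P^\ast_1X$. Since $\Sigma$ is $\gamma$-separated, $\dim E_b=8$, and therefore $P^\ast_1X=E_b$. Each $E^i_b$ is $L$-invariant as well, so $e^{Ls}P^\ast_1$ preserves each $E^i_b$. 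It then suffices to compute $e^{Ls}P^\ast_1\Xi$ for $\Xi\in E^i_b$.

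For such $\Xi$, the contour integral in \eqref{ProPP6} only involves the finite-dimensional resolvent $(\lambda-M_i)^{-1}$, where $M_i$ is the matrix of $L|_{E^i_b}$ in the basis $\Sigma$. The Cauchy integral formula for matrices then gives $e^{Ls}P^\ast_1|_{E^i_b}\sim e^{sM_i}$ for every $s\in\R$, and in particular for $s\le 0$. It remains to exponentiate the matrices. For $M_1=I$ we get $e^{s}I$. For $M=\alpha I+\beta J$, with $J$ the rotation generator $\begin{pmatrix}0&-1\\1&0\end{pmatrix}$, the matrices $I$ and $J$ commute and $J^2=-I$. Hence
\[
e^{sM}=e^{\alpha s}\big(\cos(\beta s)I+\sin(\beta s)J\big),
\]
which is exactly the stated rotation matrix.

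The only delicate point is justifying $P^\ast_1X=E_b$, that is, showing that $E_b$ captures all of the spectral subspace and that the contour encloses all eigenvalues of $L|_{E_b}$. This rests on the dimension count from the perturbation estimate $\|P^\ast_1-P_1\|\lesssim|b|<1$ together with the $\gamma$-separation of $\Sigma$. I expect this step to be routine given Propositions \ref{ProPP8} and \ref{mprop1}.
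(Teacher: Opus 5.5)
Your proposal is correct and follows the same route as the paper, which obtains \eqref{mprop8} directly from the matrix representations \eqref{mprop3}--\eqref{mprop5} by exponentiating them. You additionally spell out two points the paper leaves implicit: that the contour integral in \eqref{ProPP6} reduces to $e^{sM_i}$ on each invariant $E^i_b$, and that $P^\ast_1X=E_b$ by a dimension count; the second point is not strictly needed for the statement itself, but it is what the proof of Theorem \ref{MTh1} uses.
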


\begin{proof}
    \eqref{mprop8} follows directly from the matrix representations \eqref{mprop3}-\eqref{mprop5}. 
\end{proof}

We can now give the proof of our main Theorem \ref{MTh1}. 
\begin{proof}[Proof of Theorem \ref{MTh1}]
For any vector $\Xi\in X$ with $\Xi=\begin{pmatrix}
\xi\\
\xi'\\
\vartheta\\
\vartheta^\ast
\end{pmatrix}\in X$, we define
\begin{equation}\label{S5e17.1}
F(\Xi)=\begin{pmatrix}
0\\
-\xi\cdot\nabla\xi-\vartheta\xi'\\
0\\
-\xi\cdot\nabla\vartheta+\vartheta^2+\vartheta(\vartheta^\ast+q)\end{pmatrix},
\end{equation}
where we assume ${\rm div}\,\xi+\vartheta+\vartheta^\ast+q=0$, which is the usual divergence free condition.

It is simple to verify that we have the bounds for $\Xi\in X$,
\begin{equation}\label{S6e2}
\big\|F\big(\Xi\big)\big\|_X\lesssim \big\|\Xi\big\|_X^2.
\end{equation}

By Proposition \ref{Pe}, the perturbed steady Navier Stokes equation \eqref{Pe1} in $\R^3\backslash B_1$ can be reformulated in terms of $\Xi:(-\infty,0]\to X$, 
\begin{equation}\label{S6e3}
\partial_s\Xi(s)=L\Xi(s)+F(\Xi(s)),\qquad{\rm for}\,\,s\in(-\infty,0].
\end{equation}
The argument below is similar to the construction of unstable manifolds in dynamical systems. Indeed, by the result in \cite{SverakKorolev}, we know that 
\begin{equation}\label{S6e4}
\big\|\Xi(s)\big\|_X\leq C\delta_0\,e^{7s/8}. 
\end{equation}

Using the bounds \eqref{S6e4}, Proposition \ref{mprop1} and Proposition \ref{mprop7}, we have for $s\in(-\infty,0]$,
\begin{equation}\label{S6e11}
\begin{split} 
\Xi(s)&=P^\ast_-\Xi(s)+P^\ast_+\Xi(s)+P^\ast_0\Xi(s)+P^\ast_1\Xi(s),\\
P^\ast_-\Xi(s)&=\int_{-\infty}^se^{L(s-\tau)}P^\ast_-F(\Xi(\tau))\,d\tau, \\
P^\ast_+\Xi(s)&=e^{Ls}P^\ast_+\Xi(0)-\int_s^0e^{L(s-\tau)}P^\ast_+F(\Xi(\tau))\,d\tau, \\
P^\ast_0\Xi(s)&=\int_{-\infty}^sP^\ast_0F(\Xi(\tau))\,d\tau,\\
P^\ast_1\Xi(s)&=e^{Ls}P^\ast_1\Xi(0)-\int_s^0e^{L(s-\tau)}P^\ast_1F(\Xi(\tau))\,d\tau. 
\end{split}
\end{equation}
Note that from the bounds  \eqref{S6e2} and \eqref{S6e4}, equations \eqref{S6e11}, and Propositions \ref{mprop1} and \ref{mprop7}, 
\begin{equation}\label{S6e13}
   \Big\|\Xi(s)-e^{Ls}\Big[P^\ast_1\Xi(0)-\int_{-\infty}^0e^{-L\tau}P_1^\ast F(\Xi(\tau))\,d\tau\Big]\Big\|_X\lesssim \delta_0^2\,e^{7s/4},
\end{equation}
from which our main theorem follows. 
\end{proof}

\section{Computation of the perturbed eigenvalues (I): preliminaries}\label{sec:comp1}
In this section we outline the general plan for computing the eigenvalues of $L$ that are perturbations of the eigenvalues $\lambda=1$ for $L_0$. Throughout the section we assume without loss of generality that $b=f(\epsilon) \begin{pmatrix}
    0\\0\\1
\end{pmatrix}$ as in \eqref{Int3.2}-\eqref{Int3.3} for some $0<\epsilon\ll1$.

\subsection{General formulae for linear operator perturbation theory}

Let us consider the following abstract situation. The notations in this section are independent of those in all other sections. 

Assume that $L_0$ is a linear operator on a Banach space $X$ and we have a decomposition
\begin{equation}\label{GFL1}
    X=E\oplus Y
\end{equation}
where $E$ is finite dimensional, $Y$ is closed and both $E, Y$ are invariant under $L_0$. We will assume that
\begin{equation}\label{GFL2}
    L_0|_E=I_E\quad{\rm and}\quad L_0|_Y=\Lambda,
\end{equation}
where $I-\Lambda$ is invertible on $Y$. 


We assume that $L_0$ is perturbed by a ``small" operator $K$ and would like to compute what happens to the eigenvalue $\lambda=1$ of $L_0$ and its eigenspace $E$. 

Let us write vectors on $X$ as $\begin{pmatrix}
    u\\v
\end{pmatrix}$, with $u\in E$ and $v\in Y$. We denote the projections
\begin{equation}\label{GFL3}
    \begin{pmatrix}
        u\\v
    \end{pmatrix}\to \begin{pmatrix}
        u\\0
    \end{pmatrix}, \quad \begin{pmatrix}
        u\\v
    \end{pmatrix}\to \begin{pmatrix}
        0\\v
    \end{pmatrix}
\end{equation}
by $P$ and $Q$, respectively. Let us write
\begin{equation}\label{GFL4}
    K=\begin{pmatrix}
        A&B\\
        C&D
    \end{pmatrix},
\end{equation}
where $A:E\to E$ is given by $Au=PKu$, $B:Y\to E$ is given by $Bv=PKv$, $C:E\to Y$ is given by $Cu=QKu$ and $D:Y\to Y$ is given by $Dv=QKv$. 

Under the perturbation $L_0\to L_0+K$ the invariant space 
$E\sim \left\{\begin{pmatrix}
    u\\0
\end{pmatrix}\right\}$ of $L_0$ is expected to deform to an invariant space of $L=L_0+K$ of the form
\begin{equation}\label{GFL5}
    F\sim\left\{\begin{pmatrix}
        u\\ Mu
    \end{pmatrix}, \,u\in E\right\}.
\end{equation}
The invariance of $F$ under $L$ is expressed by 
\begin{equation}\label{GFL6}
    L\begin{pmatrix}
        u\\ Mu
    \end{pmatrix}=(L_0+K)\begin{pmatrix}
        u\\ Mu
    \end{pmatrix}=\begin{pmatrix}
        u+Au+BMu\\
        \Lambda Mu+Cu+DMu
    \end{pmatrix}\in F,
\end{equation}
which is the same as 
\begin{equation}\label{GFL7}
 Mu+MAu+MBMu=\Lambda Mu+Cu+DMu, \,\,\forall u\in E,
\end{equation}
or equivalently
\begin{equation}\label{GFL8}
  M=(I-\Lambda)^{-1}C+(I-\Lambda)^{-1}DM-(I-\Lambda)^{-1}MA-(I-\Lambda)^{-1}MBM.  
\end{equation}

We can think of this equation as an equation for a ``fixed point" $M$":
\begin{equation}\label{GFL9}
    M=\Phi(M)
\end{equation}
for the map $\Phi$ given by the right-hand side of \eqref{GFL8}, and iterate $\Phi$ to the the fixed point at the limit of the sequence
\begin{equation}\label{GFL10}
    \Phi(M), \quad\Phi(\Phi(M)),\quad\Phi(\Phi(\Phi(M))), \quad\dots
\end{equation}

When we think of $K$ as ``order $\epsilon$" (writing it as $K=\epsilon \widetilde{K}$), then our solution can be written as $\epsilon C_1+\epsilon^2C_2+\epsilon^3C_3+\dots$, where each $C_i$ is a finite (algebraic) expression in $(I-\Lambda)^{-1}$, $A, B, C, D$. 

The finite dimensional space $F$ will be isomorphic to $E$ through the map  $u\to \begin{pmatrix}
    u\\ Mu
\end{pmatrix}$, and the map $L|_F$ will be isomorphic to the map
\begin{equation}\label{GFL11}
    u\to u+Au+BMu.
\end{equation}
In particular, the eigenvalues of $L|_F$ will be given by the matrix of this map, which we will write as 
\begin{equation}\label{GFL12}
    I+A+BM.
\end{equation}

Our calculation will aim to obtain the expression with an error of $O(\epsilon^3)$. For this level of approximation our matrix is 
\begin{equation}\label{GFL13}
    I+A+B(1-\Lambda)^{-1}C+{\rm error\,\,of\,\,order\,\,}O(\epsilon^3). 
\end{equation}

We summarize our discussion above in the following proposition.

\begin{proposition}\label{GFL13.1}
 Fix $\kappa>1$. Assume that $L_0:\mathcal{D}\subseteq X\to X$ is a closed linear operator  on a Banach space $X$ with non-empty resolvent, and we have a decomposition
\begin{equation}\label{GFL13.2}
    X=E\oplus Y
\end{equation}
where $E$ is finite dimensional, $Y$ is closed and both $E, Y\cap\mathcal{D}$ are invariant under $L_0$. Assume that
\begin{equation}\label{GFL12.3}
    L_0|_E=id_E\quad{\rm and}\quad L_0|_{Y \cap\mathcal{D}}=\Lambda,
\end{equation}
where $id_E$ is the identity map on $E$ and $I-\Lambda$ is boundedly invertible on $Y\cap\mathcal{D}$ with
\begin{equation}\label{GFL12.4}
    \|(I-\Lambda)^{-1}\|_{Y\to Y}\leq\kappa.  
\end{equation}
Write vectors on $X$ as $\begin{pmatrix}
    u\\v
\end{pmatrix}$, with $u\in E$ and $v\in Y$. Let $P,Q$ be the projection operators to $E, Y$ respectively, satisfying the bounds that for any $\Xi\in X$,
\begin{equation}\label{GFL12.5}
\kappa^{-1}\big(\|P\Xi\|_X+\|Q\Xi\|_X\big)\leq\|\Xi\|_X\leq \kappa\big(\|P\Xi\|_X+\|Q\Xi\|_X\big). 
\end{equation}
Suppose that $K$ is a bounded linear operator from $X$ to $X$ given by
\begin{equation}\label{GFL12.6}
    K\begin{pmatrix}
        u\\v
    \end{pmatrix}=\begin{pmatrix}
        A&B\\
        C&D
    \end{pmatrix}\begin{pmatrix}
        u\\v
    \end{pmatrix}=\begin{pmatrix}
        Au+Bv\\Cu+Dv
    \end{pmatrix}
\end{equation}
for certain bounded linear operators $A:E\to E$, $B:Y\to E$, $C: E\to Y$ and $D: Y\to Y$. Moreover for some $\varepsilon\in(0,1)$, we have
\begin{equation}\label{GFL12.7}
    \|K\Xi\|_X\leq\varepsilon \|K\Xi\|_X,\quad\forall \Xi\in X. 
\end{equation}
If $\varepsilon>0$ is sufficiently small depending on $\kappa$, then there exists a bounded linear operator $M:E\to Y$, such that
\begin{equation}
    \|M-(I-\Lambda)^{-1}C\|_{E\to Y}\lesssim_\gamma \varepsilon^2
\end{equation}
and the space
\begin{equation}\label{GFL12.8}
    F:=\left\{\begin{pmatrix}
        u\\Mu
    \end{pmatrix}|\,\,u\in E\right\}
\end{equation}
is invariant under $L:=L_0+K$. In addition, through the isormorhism $u\in E\to \begin{pmatrix}
    u\\M u
\end{pmatrix}\in F$, $L|_F$ is represented by the map 
\begin{equation}\label{GFL12.9}
    u\to u+Au+BMu,
\end{equation}
that is for any $\begin{pmatrix}
    u\\M u
\end{pmatrix}\in F$, 
\begin{equation}\label{GFL12.10}
   L\begin{pmatrix}
    u\\M u
\end{pmatrix}=\begin{pmatrix}
 u+Au+BMu\\M(u+Au+BMu)   
\end{pmatrix}.
\end{equation}

\end{proposition}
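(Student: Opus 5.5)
We will prove Proposition \ref{GFL13.1} by a contraction mapping argument for the fixed point equation \eqref{GFL8}, then verify invariance by reversing the algebra that led from \eqref{GFL6} to \eqref{GFL8}. (We read the hypothesis \eqref{GFL12.7} as $\|K\Xi\|_X\le\varepsilon\|\Xi\|_X$, which is clearly what is intended.) From \eqref{GFL12.5} and \eqref{GFL12.7}, the blocks $A=PK|_E$, $B=PK|_Y$, $C=QK|_E$, $D=QK|_Y$ all have operator norm $\lesssim_\kappa\varepsilon$.

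Let $\mathcal{B}$ be the Banach space of bounded linear maps $E\to Y$ with operator norm. Since $E$ is finite dimensional, every such map is bounded. Define
\[
\Phi(M):=(I-\Lambda)^{-1}\big(C+DM-MA-MBM\big).
\]
This is well defined as a map into $Y$: the bracket lies in $Y$ (its terms are $QK$-images and elements of the range of $M$), and $(I-\Lambda)^{-1}$ is bounded on $Y$ with norm $\le\kappa$ by \eqref{GFL12.4}. We restrict $\Phi$ to the ball $\mathcal{B}_\rho=\{\|M\|\le\rho\}$ with $\rho=2\kappa\|C\|\lesssim_\kappa\varepsilon$.

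\emph{Self-map.} We have
\[
\|\Phi(M)\|\le\kappa\big(\|C\|+(\|D\|+\|A\|)\rho+\|B\|\rho^2\big)\le\kappa\|C\|+C_\kappa\varepsilon\rho\le\rho
\]
for $\varepsilon$ small.

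\emph{Contraction.} We have
\[
\|\Phi(M_1)-\Phi(M_2)\|\le\kappa\big(\|D\|+\|A\|+2\rho\|B\|\big)\|M_1-M_2\|\le C_\kappa\varepsilon\|M_1-M_2\|,
\]
which is less than $\tfrac12\|M_1-M_2\|$ for $\varepsilon$ small. Banach's fixed point theorem then gives a unique fixed point $M\in\mathcal{B}_\rho$.

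\emph{Estimate.} From the fixed point equation,
\[
M-(I-\Lambda)^{-1}C=(I-\Lambda)^{-1}\big(DM-MA-MBM\big),
\]
whose norm is $\lesssim_\kappa\varepsilon\rho+\varepsilon\rho^2\lesssim_\kappa\varepsilon^2$. This is the claimed bound (the subscript $\gamma$ in the statement should read $\kappa$).

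\emph{Invariance.} The step requiring some care is the domain issue. Because $\Lambda$ is only defined on $Y\cap\mathcal{D}$, we note that $Mu=(I-\Lambda)^{-1}(\cdots)$ lies in $Y\cap\mathcal{D}$, so $L\binom{u}{Mu}$ is defined, with $L_0\binom{u}{Mu}=\binom{u}{\Lambda Mu}$ by \eqref{GFL12.3}. Apply $(I-\Lambda)$ to the fixed point identity; this is legitimate since both sides lie in $Y\cap\mathcal{D}$. The result is exactly \eqref{GFL7}:
\[
\Lambda Mu+Cu+DMu=M(u+Au+BMu).
\]
Hence
\[
L\binom{u}{Mu}=\binom{u+Au+BMu}{\Lambda Mu+Cu+DMu}=\binom{w}{Mw},\qquad w:=u+Au+BMu\in E.
\]
This is \eqref{GFL12.10}; it gives invariance of $F$ and the representation \eqref{GFL12.9}.

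Finally, $F$ is closed and isomorphic to $E$ via $u\mapsto\binom{u}{Mu}$, by \eqref{GFL12.5} and boundedness of $M$. There is no real obstacle; the only points needing care are the smallness bookkeeping in terms of $\kappa$ and the domain check that $Mu\in\mathcal{D}$.
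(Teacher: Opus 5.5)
Your proposal is correct and takes essentially the same route as the paper, whose proof simply invokes a fixed point argument for \eqref{GFL8} together with the algebra in \eqref{GFL6}--\eqref{GFL8}. You fill in the details the paper leaves out: the contraction estimates on a ball of radius $\rho\sim_\kappa\varepsilon$, the resulting $\varepsilon^2$ bound, and the check that $Mu\in Y\cap\mathcal{D}$, which justifies applying $I-\Lambda$ to recover \eqref{GFL7}. Your readings of \eqref{GFL12.7} as $\|K\Xi\|_X\le\varepsilon\|\Xi\|_X$ and of the subscript $\gamma$ as $\kappa$ are the intended ones.
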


\begin{proof}
    The proof follows from the discussion above the proposition and fixed point argument using \eqref{GFL8}. 
\end{proof}

\begin{remark}
    For applications below, to calculate the perturbed eigenvalues for $L$ around $\lambda=1$, our main task is to compute the map $I+A+B(I-\Lambda)^{-1}C: E\to E$ up to $O(\epsilon^3)$. Once we obtained the formula for this map, it is straightforward to compute the eigenvalues since $E$ is finite dimensional.
\end{remark}

\subsection{Symmetry considerations}
We review some elementary observations concerning the symmetries of our equations. These will be useful for reducing the eigenvalue multiplicitites. 

Let us write
\begin{equation}\label{CPE1}
    \N(u):=-\Delta u+u\cdot\nabla u+\nabla p=-\Delta u+\pr(u\cdot\nabla u),
\end{equation}
where $\pr$ is the Helmholtz projection. We assume that $u$ are divergence-free fields defined on $\R^3$. We consider the usual action of the orthogonal group $O(n)$ on the vector fields: for $Q\in O(n)$ we let $Q\cdot u$ be defined as
\begin{equation}\label{CPE2}
    (Q\cdot u)(x):=Qu(Q^{-1}x).
\end{equation}

We remark that the definition also works when $u$ is defined in a subset $\Omega\subset\R^3$ which is invariant under $Q$. 

The Navier-Stokes equation is invariant under this action:
\begin{equation}\label{CPE3}
    \N(Q\cdot u)=Q\cdot \N(u). 
\end{equation}

The symmetries which will be relevant for our purposes here are 
\begin{enumerate}[label=(\roman*)]
    \item The rotations about the $x_3$-axis
    \begin{equation}\label{CPE4}
        R=R(\phi)=\begin{pmatrix}
            \cos\phi&-\sin\phi&0\\
            \sin\phi&\cos\phi&0\\
            0&0&1
        \end{pmatrix};
    \end{equation}

    \item The reflection
   \begin{equation}\label{CPE5}
   T=\begin{pmatrix}
        1&0&0\\
        0&1&0\\
        0&0&-1
    \end{pmatrix};
    \end{equation}

    \item  The reflection
   \begin{equation}\label{CPE6}
  S= \begin{pmatrix}
        -1&0&0\\
        0&1&0\\
        0&0&1
    \end{pmatrix}.
    \end{equation}
\end{enumerate}

    We will say that a vector field $u$ is 
\begin{enumerate}[label=(\roman*)]
    \item {\it axi-symmetric},
    when $R\cdot u=u$ for each $R=R(\phi)$ as in \eqref{CPE4};

    \item {\it no swirl},
    when it is axi-symmetric and, moreover, $S\cdot u=u$;

    \item {\it pure swirl},
    when it is axi-symmetric and $S\cdot u=-u$. 
\end{enumerate}


We will consider the Landau solutions in a form similar to that presented in Landau-Lifshitz \cite{Lifschitz}, pages 81 - 85. See also \eqref{Int3.2} - \eqref{Int3.3}. In polar coordinates, recall that 
\begin{equation}\label{CPE7}
    U^b(x):=\frac{1}{|x|}V^b(x/|x|)e_\theta+\frac{1}{|x|}F^b(x/|x|)e_r,
\end{equation}
and
\begin{equation}\label{CPE8}
\begin{split}
  &  V^b=-\frac{2\epsilon\sin\theta}{1-\epsilon\cos\theta},\quad F^b=2\bigg[\frac{1-\epsilon^2}{(1-\epsilon\cos\theta)^2}-1\bigg], \\
  &|b|=f(\epsilon):=16\pi\bigg[\frac{1}{\epsilon}+\frac{1}{2\epsilon^2}
  \log\frac{1-\epsilon}{1+\epsilon}+\frac{4\epsilon}{3(1-\epsilon^2)}\bigg],\quad \epsilon\in(0,1).  
    \end{split}
\end{equation}
The parameter $\epsilon$ is in $(-1,1)$, but we will only deal with small values of $\epsilon$ most of the time. To emphasize the dependence on $\epsilon$, we will write (recall the definition of the linear operator $L$ from \eqref{S1e9})
\begin{equation}\label{CPE9}
    U^\epsilon=U^b,\,\, F^\epsilon=F^b,\,\, V^\epsilon=V^b,\,\,L^\epsilon=L. 
\end{equation}
Clearly the Landau solutions are axi-symmetric with no swirl. Moreover, we have 
\begin{equation}\label{CPE10}
    U^{-\epsilon}=T\cdot U^{\epsilon}.
\end{equation}

The linearized operator $L^\epsilon$ in some sense inherits the symmetries of the corresponding nonlinear operators and the solutions at which we linearize. In particular, taking the derivative of 
\begin{equation}\label{CPE11}
    \N(T\cdot u)=T\cdot \N(u)
\end{equation}
and using $U^{-\epsilon}=T\cdot U^\epsilon$, it is not hard to see that 
\begin{equation}\label{CPE12}
    L^{-\epsilon}=TL^{\epsilon}T^{-1}=TL^{\epsilon}T. 
\end{equation}
This means that the spectra of $L^\epsilon$ and $L^{-\epsilon}$ are the same. 

\begin{remark}
    From the $T-$symmetry and the formula $L^{-\epsilon}=TL^\epsilon T$ it is clear that we need to compute various terms up to the second order (at least). We will not see a change of the eigenvalues at the first order. 
\end{remark}

In a similar way, using 
\begin{equation}\label{CPE13}
    \N(S\cdot u)=S\cdot \N(u),\quad S\cdot U=U \,\,({\rm where\,\,} U\,\,{\rm is\,\,a\,\,Landau\,\,solution})
\end{equation}
we obtain 
\begin{equation}\label{CPE14}
    L\cdot S=S\cdot L. 
\end{equation}
This implies

\begin{lemma}\label{CEP15}
    The pure swirl fields are invariant under $L$.
\end{lemma}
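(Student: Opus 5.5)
The plan is to combine the two commutation relations already derived: $L$ commutes with every rotation $R(\phi)$ about the $x_3$-axis, and $L$ commutes with the reflection $S$, as in \eqref{CPE14}.

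\textbf{Step 1: rotations.} The Landau solution satisfies $R\cdot U^b=U^b$. Differentiating $\N(R\cdot u)=R\cdot\N(u)$ at $u=U^b$ then gives
\[
L^\epsilon R = R L^\epsilon
\]
for every $R=R(\phi)$. This is the same computation that produced \eqref{CPE12} and \eqref{CPE14}. Here the action of $R$ and $S$ on $\Xi=(\xi,\xi',\vartheta,\vartheta^\ast)$ is induced through \eqref{Pe3}--\eqref{Pe4} from $(Q\cdot u)(x)=Qu(Q^{-1}x)$ together with $(Q\cdot p)(x)=p(Q^{-1}x)$.

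This induced action makes sense for the following reasons:
\begin{itemize}
\item Orthogonal maps preserve $|x|$, so they commute with the change of variables $s=-\log r$ and with $r\partial_r$.
\item They preserve the splitting of $u$ into tangential and radial parts.
\item They preserve the constraint ${\rm div}\,\xi+\vartheta+\vartheta^\ast+q=0$, since that constraint is just ${\rm div}\,u=0$.
\end{itemize}

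\textbf{Step 2: invariance of the pure swirl class.} Let $\Xi$ be a pure swirl field in the domain $\mathcal{D}$. Then
\[
R(L\Xi)=L(R\Xi)=L\Xi
\quad\text{and}\quad
S(L\Xi)=L(S\Xi)=L(-\Xi)=-L\Xi .
\]
So $L\Xi$ is axi-symmetric and odd under $S$, which means it is again pure swirl.

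The same argument shows that the pure swirl subspace is invariant under the resolvent $(\lambda-L)^{-1}$. Consequently it is also invariant under the spectral projections $P^\ast_j$, which are defined by contour integrals of the resolvent.

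\textbf{Main obstacle.} The only point needing care is the rigorous justification of $LS=SL$ and $LR=RL$ at the level of the reformulated operator \eqref{S1e9}, because the pressure $q$ there is determined implicitly. The argument for this is:
\begin{itemize}
\item By Proposition \ref{Pe}, $L$ is exactly the linearization of \eqref{Pe1}, written in the new variables.
\item The linearized equation \eqref{Pe1} is equivariant under $Q\in\{R(\phi),S\}$ because $Q\cdot U^b=U^b$.
\item Since the change of variables commutes with the group action, the equivariance transfers directly to $L$.
\end{itemize}
Alternatively, one can check directly that each term of $K$ in \eqref{S31e1} is equivariant. In spherical coordinates, $S$ acts as $\phi\mapsto\pi-\phi$ and flips the sign of the $e_\phi$ component, while $\bV^b$ and $F^b$ depend only on $\theta$ and have no $e_\phi$ component. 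The operator $L_0$ is built from $\Delta$, $d$ and $d^\ast$, all of which commute with isometries of $\us$.
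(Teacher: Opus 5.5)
Your argument is correct and is essentially the paper's proof, which uses the commutation $LS=SL$ from \eqref{CPE14} to conclude $S\cdot L\Xi=L\cdot S\Xi=-L\Xi$. You also spell out the commutation $LR(\phi)=R(\phi)L$ needed to preserve axi-symmetry, a step the paper's one-line proof leaves implicit.
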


\begin{proof}
    The pure swirl fields are characterized by $S\cdot v=-v$, then $S\cdot L\cdot v=L\cdot S\cdot v=-L\cdot v$. 
\end{proof}

\subsection{Decomposing into angular Fourier modes} 
When ${\rm dim}\,E=1$, then $I+A+B(I-\Lambda)^{-1}C$ from Proposition \ref{GFL13.1} is just a number, exactly the sought-after eigenvalue (up to error $O(\epsilon^3)$). 

For our purposes, however, the eigenspace for $L_0$ corresponding to $\lambda=1$ has dimension $8$, see Proposition \ref{S2P1}. Therefore, we would need to do the calculation for ${\rm dim}\,E=8$, and to deal with $8\times8$ matrices which would be quite laborious. Fortunately, the dimension of the space $E$ can be reduced, thanks to the axial-symmetry of the Landau solutions, by restricting the spaces to different Fourier modes in the angular variable $\phi$.  

Recall the spherical coordinate $r, \theta,\phi$ from \eqref{Int3.1}-\eqref{Int3.10}, and the rotation operator $R=R(\alpha)$ from \eqref{CPE4}. We note that the fields $e_r, e_\theta, e_\phi$ are invariant under the action $u\to R\cdot u$ (with $(R\cdot u)(x)=Ru(R^{-1}x)$, and from this we see that 
\begin{equation}\label{GFL30}
\begin{split}
&R(\phi_0)\big[u^r(r,\theta,\phi)e_r+u^\theta(r,\theta,\phi)+u^\alpha(r,\theta,\alpha)e_\alpha\big]\\
&=u^r(r,\theta,\phi-\phi_0)e_r+u^\theta(r,\theta,\phi-\phi_0)+u^\alpha(r,\theta,\phi-\phi_0)e_\alpha.
\end{split}
\end{equation}
We will complexify the space $\mathcal{D}$ and $X$, and decompose according to Fourier modes in $\phi$. For $m\in\mathbb{Z}$, we call a vector field $v$, or scalar field $\varphi$ on $\us$, $m-$equivariant if for all $\phi,\phi_0\in[0,2\pi]$,
\begin{equation}\label{GFL31}
    R(\phi_0)\cdot v=v e^{-im\phi_0}, \quad \varphi(\phi-\phi_0)=\varphi(\phi)e^{-im\phi_0}.
\end{equation}

For $u$ as in \eqref{GFL30}, this amounts to the coordinate functions $u^r,u^\theta,u^\phi$ satisfying
\begin{equation}\label{GFL32}
    u^r(r,\theta,\phi)=u^r(r,\theta) e^{im\phi},\quad u^\theta(r,\theta,\phi)=u^\theta(r,\theta)e^{im\phi},\quad u^\phi(r,\theta,\phi)=u^\phi(r,\theta)e^{im\phi}.
\end{equation}

\begin{definition}\label{GFL32.1}
We define for $m\in\mathbb{Z}$ the subspaces $X^m$ and $\mathcal{D}^m$ of $X$ and $\mathcal{D}$ respectively, by requiring that all components of $\Xi\in X$ be $m-$equivariant.

The subspaces $X^m$ and $\mathcal{D}^m$ are invariant under our operators. Recall from \eqref{S2e15} that $E_1$ is the eigenspace of $L_0$ corresponding to the eigenvalue $\lambda=1$ (see also \ref{remark4.1} for the more explicit form). We define $E_1^{(m)}:=E_1\cap X^m$, and $E_{\rm axi}:=E_1^{(0)}.$
\end{definition}

By the invariance of $X^m$ and $\mathcal{D}^m$ we can apply Proposition \ref{GFL13.1} in these spaces. In practical calculations however, the distinction between $X$ and $X^m$ is minimal, since by the axial-symmetry of the Landau solutions and the corresponding linearized operators, the equivariance is preserved.

The vector field $\xi$ on $\us$ can be represented as 
\begin{equation}\label{GFL33}
    \xi=\xi^\theta e_\theta+\xi^\phi e_\phi,
\end{equation}
and we have the following formulae (where $\psi:\us\to\mathbb{C}$ is a scalar function)
\begin{equation}\label{GFL34}
\begin{split}
     &\nabla\psi=\frac{\partial\psi}{\partial\theta}e_\theta+\frac{\partial\psi}{\st\partial\phi}e_\phi,\quad \nabla^\perp \psi=-\frac{\partial\psi}{\st\partial\phi}e_\theta+\frac{\partial\psi}{\partial\theta}e_\phi,\\
    &{\rm div}\,\xi=\frac{1}{\st}\frac{\partial}{\partial\theta}\big(\st\, \xi^\theta\big)+\frac{\partial\xi^\phi}{\st\, \partial\phi},\quad {\rm curl}\,\xi=\frac{1}{\st}\frac{\partial}{\partial\theta}\big(\st\, \xi^\phi\big)-\frac{\partial\xi^\theta}{\st \partial \phi},\\
    &{\rm curl}\,\nabla^\perp\psi=\Delta\psi=\frac{1}{\st}\frac{\partial}{\partial\theta}\big(\st \frac{\partial\psi}{\partial\theta}\big)+\frac{\partial^2\psi}{\sin^2\theta\partial^2\phi}. 
    \end{split}
\end{equation}

We note that the vector fields $e_\theta, e_\phi$ are not smooth at $\theta=0$ and $\theta=\pi$, and hence even for smooth vector field $\xi$, the coordinate functions $\xi^\theta,\xi^\phi$ may not be smooth. 

One can also work with another representation
\begin{equation}\label{GFL35}
    \xi=\nabla \varphi+\nabla^\perp \psi, \quad \Delta\varphi={\rm div}\,\xi, \quad \Delta\psi={\rm curl}\,\xi. 
\end{equation}
In this representation, smooth vector fields $\xi$ are represented by smooth functions $\varphi, \psi$ and vice-versa. 

Using \eqref{GFL35}, we can expand expand the representation \eqref{S1e11}-\eqref{S1e11.1} for $\Xi\in X$ or $\mathcal{D}$ into a vector
\begin{equation}\label{GFL36}
    \Xi=\begin{pmatrix}
        \xi\\ \xi'\\\vartheta\\\vartheta^\ast
    \end{pmatrix}=\begin{pmatrix}
\varphi\\\psi\\\varphi'\\\psi'\\\vartheta\\\vartheta^\ast
    \end{pmatrix},\quad{\rm with}\,\,\xi\sim\begin{pmatrix}
        \varphi\\\psi
    \end{pmatrix},\,\,\xi'=\begin{pmatrix}
        \varphi'\\\psi'
    \end{pmatrix}. 
\end{equation}
In this representation, all components of the vector $\Xi$ are now scalar functions (compare also with the representation \eqref{ProP3}, which is equivalent using \eqref{GFL35}). 

In the following sections, we will calculate the perturbed eigenvalues of $L$ around $\lambda=1$ in each space $X^m$, $m\in\{0,1,2\}$ (the cases $m\in\{-1,-2\}$ then follow by conjugation). We note that ${\rm dim}\,E_{\rm axi}=2 $, ${\rm dim}\,E_1^{(1)}={\rm dim}\,E_1^{(-1)}=2$ and ${\rm dim}\,E_1^{(2)}={\rm dim}\,E_1^{(-2)}=1$. Our calculations below show that $L|_{E_{\rm axi}}$ has a geometrically double eigenvalue $\lambda=1$ (independent of $\epsilon$), $L|_{E_{1}^{(1)}}$ has eigenvalues $\lambda=1$ and $\lambda=1+\frac{1}{15}\epsilon^2+O(\epsilon^3)$, and $L|_{E_1^{(2)}}$ has eigenvalues $\lambda=1+\frac{4}{15}\epsilon^2+O(\epsilon^3)$. Moreover, the eigenvectors in each $E_1^{(m)}$ form nontrivial angles (independent of $\epsilon$). The Proposition \ref{mprop1} will then be proved. 

\section{Computation of the perturbed eigenvalues (II): The axially symmetric case $m=0$}\label{sec:m0}
In this section we compute the eigenvalues of $L$ which are close to $\lambda=1$, when restricted to axi-symmetric fields. From Remark \ref{rem1}, we see that $L$ has one axi-symmetric eigenvector corresponding to $\lambda=1$, which is generated by $\partial_{x_3}U^\epsilon$ through the formulae \eqref{S1e4} and \eqref{S1e7}. Since the space $E_{\rm axi}$ is two dimensional, to completely determine the spectrum of $L$ restricted to this space, we need to find the second eigenvector, which turns out to be given by the ``pure-swirl" fields. 

\subsection{Representation of pure swirl fields on the sphere}

Let $\theta, \phi$ be the spherical coordinates from \eqref{Int3.1}. Recall the definition of $e_\theta, e_\phi$ from \eqref{Int3.10} (corresponding to $\partial_\theta$ and $\frac{1}{\sin\theta }\partial_\phi$ respectively).  Clearly the pure swirl fields $\xi$ on the sphere can be represented as 
\begin{equation}\label{CEP16}
    \xi=v(\theta)e_\phi,
\end{equation}
where $v(\theta)$ is an ``axi-symmetric function" on the sphere. The advantage of this representation is that we are dealing with a scalar function $v$. A disadvantage is that smooth fields $\xi$ do not correspond to a smooth function $v(\theta)$, due to the singularity of the vector field $e_\phi$ at $\theta=0$. 

Noting that the fields $\xi=v(\theta)e_\phi$ are divergence free, we can represent $\xi$ by stream functions:
\begin{equation}\label{CEP17}
    \xi=\nabla^\perp \psi=\psi'(\theta)e_\phi \,\,({\rm where}\,\,\psi'=\frac{\partial\psi}{\partial\theta}). 
\end{equation}
Here $v(\theta)=\psi'(\theta)$. When $\xi$ is smooth, so is $\psi$ (as a function on $\us$). 

It is not hard to check that 
\begin{equation}\label{CEP18}
    \Delta \nabla^\perp\psi=\nabla^\perp\Delta\psi 
\end{equation}
and therefore the decomposition of the pure swirl fields into the eigenfunction of the (Hodge) Laplacian is also reflected by the decomposition of the axi-symmetric stream functions into the eigenfunctions of the Laplacian (on functions). These eigenfunctions are given by $P_k(\cos\theta), k=0,1,2,\dots,$ where $P_k$ are the Legendre polynomials, see \eqref{S31e4} for the precise formulae. 

For our calculations below we will use the formulae for $P_1, P_2, P_3$:
\begin{equation}\label{CEP19}
    P_1(t)=t,\quad P_2(t)=\frac{1}{2}(3t^2-1),\quad P_3(t)=\frac{1}{2}(5t^3-3t). 
\end{equation}


From the symmetry considerations, we see that the subspace of $\mathcal{D}$ given by 
\begin{equation}\label{CEP20}
    W:=\{(\xi,\xi',0,0):\,\xi,\,\xi'\,\,{\rm are\,\,pure\,\,swirl\,\,fields}\}
\end{equation}
is invariant under $L$ (this can also be checked by direct computation) and for our calculation we can restrict our attention to this subspace. For any
\begin{equation}\label{CEP21}
    \Xi=\begin{pmatrix}
        \xi\\\xi'\\0\\0
    \end{pmatrix}\in W,
\end{equation}
we represent the fields $\xi,\xi'$ by their stream functions, $\xi=\nabla^\perp \psi, \xi'=\nabla^\perp\psi'$, and write for simplicity of notations
\begin{equation}\label{CEP22}
    \Xi=\begin{bmatrix}
        \psi\\
        \psi'
    \end{bmatrix}.
\end{equation}
The space $W$ has a Hilbert basis consisting of the eigenvectors of $L_0$. In the stream function representation (see \eqref{CEP22}), it is
\begin{equation}\label{CEP23}
    \begin{split}
        &\Xi_k=\begin{bmatrix}
            p_k\\
            -kp_k
        \end{bmatrix}, \quad p_k(\theta)=P_k(\cos\theta),\,\,k=1,2,3,\dots, L_0\Xi_k=k\Xi_k;\\
        &\Xi_{-k}=\begin{bmatrix}
            p_k\\
            (k+1)p_k
        \end{bmatrix}, \quad p_k(\theta)=P_k(\cos\theta),\,\,k=1,2,3,\dots, L_0\Xi_{-k}=-(k+1)\Xi_{-k}.
    \end{split}
\end{equation}

\subsubsection{Calculations on the pure swirl fields}

We now turn to the calculation. All the objects will be calculated up to an error of order $O(\epsilon^3)$, where $\epsilon$ is the parameter in the Landau solution (see \eqref{CPE8}). 

For the Landau solution we have
\begin{equation}\label{GFL14}
    V^\epsilon=-2\epsilon\sin\theta-2\epsilon^2\cos\theta\,\sin\theta+O(\epsilon^3),\,\, F^\epsilon=4\epsilon \cos\theta+6\epsilon^2\cos^2\theta-2\epsilon^2+O(\epsilon^3).
\end{equation}

In our specific situation, the subspace $E$ from the general considerations above is the one-dimensional subspace generated by the vector 
$\Xi=\begin{bmatrix}
    p_1\\-p_1
\end{bmatrix}$. We will be using the following formulae:
\begin{enumerate}[label=(\roman*)]
    \item For $\xi=a(\theta)e_\phi$, we have $e_\theta \nabla \xi=\partial_\theta a\,e_\theta$;

    \item For $\xi=a(\theta)e_\phi$ and $H=H(\theta)$, we have $\xi\nabla (He_\theta)=H\cot\theta \,e_\phi$;

    \item $K\begin{pmatrix}
        \xi\\\xi'\\0\\0
    \end{pmatrix}=\begin{pmatrix}
        0\\-\bV^\epsilon  \nabla \xi-\xi\nabla \bV^\epsilon-F^\epsilon\xi'\\0\\0\end{pmatrix}$;

    \item $\Xi_1=\begin{bmatrix}
        p_1\\-p_1
    \end{bmatrix}=\begin{pmatrix}
        -\sin\theta \,e_\phi\\ \sin\theta \,e_\phi\\0\\0
    \end{pmatrix}$. 
\end{enumerate}

We calculate 
\begin{equation}\label{GFL15}
    K\Xi_1=\begin{pmatrix}
        0\\ \Big[-8\epsilon \cos\theta \,\sin\theta-10\epsilon^2\cos^2\theta\,\sin\theta+2\epsilon^2\sin\theta\Big]e_\phi+O(\epsilon^3)\\0\\0\end{pmatrix}.
\end{equation}
We express the term in the stream function representation using the following identities
\begin{equation}\label{GFL16}
\begin{split}
    &\frac{\partial}{\partial \theta} P_1(\cos\theta)=-\sin\theta,\quad \frac{\partial}{\partial \theta} P_2(\cos\theta)=-3\cos\theta\,\sin\theta,\\
    &\frac{\partial}{\partial \theta}P_3(\cos\theta)=-\frac{15}{2}\cos^2\theta\,\sin\theta+\frac{3}{2}\sin\theta,\\
    & \frac{\partial}{\partial \theta}\Big[\frac{8}{3}\epsilon P_2(\cos\theta)+\frac{4}{3}\epsilon^2P_3(\cos\theta)\Big]=-8\epsilon\cos\theta\,\sin\theta-10\epsilon^2\cos^2\theta\,\sin\theta+2\epsilon^2\sin\theta,
\end{split}
    \end{equation}

\begin{equation}\label{GFL17}
\begin{split}
    &\begin{bmatrix}
        0\\ p_1
    \end{bmatrix}=\frac{1}{3}\begin{bmatrix}
        p_1\\2p_1
    \end{bmatrix}-\frac{1}{3}\begin{bmatrix}
        p_1\\-p_1
    \end{bmatrix}=\frac{1}{3}\Xi_{-1}-\frac{1}{3}\Xi_1,\\
    &\begin{bmatrix}
        0\\ p_2
    \end{bmatrix}=\frac{1}{5}\begin{bmatrix}
        p_2\\3p_2
    \end{bmatrix}-\frac{1}{5}\begin{bmatrix}
        p_2\\-2p_2
    \end{bmatrix}=\frac{1}{5}\Xi_{-2}-\frac{1}{5}\Xi_2,\\
    &\begin{bmatrix}
        0\\p_3
    \end{bmatrix}=\frac{1}{7}\begin{bmatrix}
        p_3\\4p_3
    \end{bmatrix}-\frac{1}{7}\begin{bmatrix}
        p_3\\-3p_3
    \end{bmatrix}=\frac{1}{7}\Xi_{-3}-\frac{1}{7}\Xi_3.
\end{split}
\end{equation}
Putting these together, we see that
\begin{equation}\label{GFL18}
    K\Xi_1=\frac{8}{15}\epsilon\big(\Xi_{-2}-\Xi_2\big)+\frac{4}{21}\epsilon^2\big(\Xi_{-3}-\Xi_3\big)+O(\epsilon^3).
\end{equation}
We conclude that 
\begin{equation}\label{GFL19}
    A=O(\epsilon^3). 
\end{equation}

It remains to compute $B(I-\Lambda)^{-1}C$. 

As $B$ is of order $\epsilon$ and we calculate up to $O(\epsilon^3)$, we see from the formula for $K\Xi_1$ that we only need to calculate the $\Xi_1-$ component of 
\begin{equation}\label{GFL20}
    \frac{8}{15}\epsilon K(I-\Lambda)^{-1}\big(\Xi_{-2}-\Xi_2\big). 
\end{equation}
Recalling that $\Lambda \Xi_2=-3\Xi_{-2}$, $\Lambda \Xi_2=2\Xi_2$, we need to compute $\frac{8}{15}\epsilon K\big(\frac{1}{4}\Xi_{-2}+\Xi_2\big)$.

We can write 
\begin{equation}\label{GFL21}
\frac{1}{4}\Xi_{-2}+\Xi_2=\frac{1}{4}\begin{bmatrix}
        p_2\\3p_2
    \end{bmatrix}-\frac{1}{5}\begin{bmatrix}
        p_2\\-2p_2
    \end{bmatrix}=\frac{5}{4}\begin{bmatrix}
        p_2\\-p_2
    \end{bmatrix}. 
\end{equation}
We need to compute $K\begin{bmatrix}
        p_2\\-p_2
    \end{bmatrix}$ up to order $O(\epsilon^2)$, which amounts to computing $\begin{pmatrix}
        0\\-{\bf V}^\epsilon\nabla\xi-\xi\nabla {\bf V}^\epsilon-F\xi'\\0\\0\end{pmatrix}$ with $\xi=\big(\frac{\partial}{\partial\theta}P_2(\cos\theta)\big)e_\phi$, $\xi'=-\xi$ using the approximation
\begin{equation}\label{GFL22}
    V^\epsilon=-2\epsilon\sin\theta+O(\epsilon^2),\quad F^\epsilon=4\epsilon\cos\theta+O(\epsilon^2). 
\end{equation}

Guessing that the projection of $K\begin{bmatrix}
        p_2\\-p_2
    \end{bmatrix}$ to $\R \Xi_1$ might vanish, we will ignore the various multiplicative factors, and do the calculation for $\xi=-\cos\theta\,\sin\theta \,e_\phi, \xi'=\cos\theta\,\sin\theta \,e_\phi$. 

We compute:
\begin{equation}\label{GFL23}
    \begin{split}
        &-{\bf V}^\epsilon \nabla\xi=V^\epsilon \partial_\theta \big(\ct\,\st\big)e_\phi=-2\epsilon\,\st\big(-\sin^2\theta+\cos^2\theta\big)e_\phi,\\
        &-\xi\nabla \bV^\epsilon=\ct\,\st\,V^\epsilon \cot\theta\, e_\phi=-2\epsilon \ct\,\sin^2\theta\cot\theta \,e_\phi=-2\epsilon\cos^2\theta\,\sin\theta\, e_\phi,\\
        &-F^\epsilon\xi'=-F^\epsilon\ct\,\st \,e_\phi=-4\epsilon\cos^2\theta \st\, e_\phi,\\
        &-{\bf V}^\epsilon\nabla\xi-\xi\nabla {\bf V}^\epsilon-F^\epsilon\xi'=-10\epsilon\cos^2\theta\st+2\epsilon\st\\
        &\qquad\qquad\qquad\qquad\qquad\,\,=2\epsilon\frac{\partial}{\partial \theta}\big(\frac{5}{3}\cos^3\theta-\ct\big)=\frac{4}{3}\epsilon\frac{\partial}{\partial \theta}P_3(\ct).
    \end{split}
\end{equation}
We see that $K\begin{bmatrix}
    p_2\\-p_2
\end{bmatrix}$ is a multiple of $\begin{bmatrix}
    0\\p_3
\end{bmatrix}$, and hence a linear combination of $\Xi_3$ and $\Xi_{-3}$. In particular, its projection to the one dimensional space $\R\cdot\Xi_1$ vanishes. We conclude that
\begin{equation}\label{GFL24}
    B(I-\Lambda)^{-1}C=0+O(\epsilon^2),
\end{equation}
and therefore, combing this with the previous calculation for $A$, we have
\begin{equation}\label{GFL25}
    I+A+B(I-\Lambda)^{-1}C=I+O(\epsilon^3).
\end{equation}

\begin{remark}
    Since the spectrum of $L^\epsilon$ and $L^{-\epsilon}$ is the same, by the symmetry considerations above, the third order term should vanish, and therefore the eigenvalue $1$ should be precise up to $O(\epsilon^4)$.

    A higher order calculation could certainly be done, but fortunately one can determine by a non-perturbative computation that the eigenvalue $\lambda=1$ indeed does not move. 
\end{remark}

\subsection{Non-perturbative calculation for pure-swirl fields}\label{sec:pureswirl}
Consider the linearized Navier Stokes around the Landau solution, see \eqref{S1e3}, and take 
\begin{equation}\label{GFL26}
    v=\frac{1}{r}g(\ct)\st\,e_\phi,\quad f\equiv0.
\end{equation}
Letting $\ct=t$, we see that $g$ satisfies
\begin{equation}\label{GFL27}
    -(1-t^2)g''(t)+4tg'(t)+\frac{2\epsilon(1-t^2)}{1-\epsilon t}g'(t)+\bigg[\frac{-4\epsilon t}{1-\epsilon t}-\frac{2(1-\epsilon^2)}{(1-\epsilon t)^2}+2\bigg]g(t)=0. 
\end{equation}
We wish to find a solution which is regular at $t=\pm1$. The singular points $t=1,-1,\frac{1}{\epsilon}$ of the equation are all ``regular singular pints". When we try to determine the usual Frobenius series at $x=\frac{1}{\epsilon}$, the ``indicial equation" comes out as
\begin{equation}\label{GFL28}
    t^2+t-2=0
\end{equation}
which has roots $t=1$ and $t=-2$. 

We try to seek solutions of the form $\frac{{\rm polynomial}(t)}{(1-\epsilon t)^2}$. Luck is on our side here and it turns out that 
\begin{equation}\label{GFL29}
    g(t)=\frac{c}{(1-\epsilon t)^2}, \quad c\in\mathbb{C}
\end{equation}
is a solution. We conclude that the eigenvalue $\lambda=1$ does not move. 

\section{Computation of the perturbed eigenvalues (III): The cases $m=1$ and $m=2$}\label{sec:comp3}

Let us now turn to vector fields that are not necessarily axi-symmetric. 
In the previous section we focused our attention on the axi-symmetric fields. Using definition \eqref{GFL32.1}, that corresponds to working with $X^m$ for $m=0$. We will now proceed with the case $m=1$. 

\subsection{Calculation for $m=1$}
Here we assume that 
\begin{equation}\label{GFL37}
    u^r(r,\theta,\phi)=u^r(r,\theta)e^{i\phi},\quad u^\theta(r,\theta,\phi)=u^\theta(r,\theta)e^{i\phi},\quad u^\phi(r,\theta,\phi)=u^\phi(r,\theta)e^{i\phi}.
\end{equation}
We will use the representation \eqref{GFL35}-\eqref{GFL36} above, and hence smooth vector field $\xi$ will be represented by smooth functions $\varphi,\psi$. In the mode with $m=1$, the functions $\varphi,\psi$ will depend on $\phi$ through $e^{i\phi}$.

A convenient basis (more precisely, Hilbert basis in a suitable Hilbert space) of the functions with $m=1$ is given by the spherical harmonics
\begin{equation}\label{GFL38}
    Z_1^1=\st\,e^{i\phi}, \quad Z_2^1=\ct\,\st\,e^{i\phi},\quad Z_3^1=(5\cos^2\theta-1)\st\,e^{i\phi},\dots
\end{equation}
We adopted the usual $L^2-$ normalization factors and in what follows we will write
\begin{equation}\label{GFL39}
    Z_1:=Z_1^1,\quad Z_2=Z_2^1, \quad Z_3:=Z_3^1
\end{equation}
as the functions $Z_m^\ell$ with $\ell\neq1$ will not be involved in the calculations in this section. 

Recall the definition of $E_k, k\in\mathbb{Z}$ from \eqref{S2e15} which are the eigenspace of $L_0$ corresponding to the eigenvalue $\lambda=k$. Set in this section
\begin{equation}\label{GFL40}
    E=E_1\cap X^{m}, \quad{\rm with}\,\,m=1. 
\end{equation}
Then ${\rm dim}\,E=2$. In the representation \eqref{GFL36} two convenient vectors of the basis of $E$ are given by
\begin{equation}\label{GFL41}
\Xi_1:=\begin{pmatrix}
    0\\Z_1\\0\\-Z_1\\0\\0
\end{pmatrix}
\quad {\rm and}\quad\Xi_1':=\begin{pmatrix}
    0\\0\\0\\0\\Z_2\\-3Z_2
\end{pmatrix}.
\end{equation}

\subsubsection{Eigenvector generated by translation symmetry}
Using the notation $U^\epsilon:=U^b$ with $b=f(\epsilon) \begin{pmatrix}
    0\\0\\1
\end{pmatrix}$, we have
\begin{equation}\label{GFL42}
    \N(U^\epsilon)=0.
\end{equation}
Taking a derivative $\frac{\partial}{\partial x_i}$ ($i=1,2,3$), we get that
\begin{equation}\label{GFL43}
    \N'(U^\epsilon)\frac{\partial U^\epsilon}{\partial x_i}=0. 
\end{equation}
Representing  $\frac{\partial U^\epsilon}{\partial x_i}$ which is $-2-$ homogeneous in the formulation \eqref{Pe3}-\eqref{Pe4}, we obtain 
\begin{equation}\label{GFL44}
    \Xi_i=\begin{pmatrix}
        \xi\\\xi'\\\vartheta\\\vartheta^\ast
    \end{pmatrix}\in E_1.
   \end{equation}
The space $\Sigma={\rm span}\,\big\{\Xi_1,\Xi_2,\Xi_3\big\}$ has dimension $3$. In $\Sigma$, the element corresponding to the mode $m=1$ (i.e., with $e^{i\phi}$ dependence on $\phi$ of the spherical coordinate component) is obtained from 
\begin{equation}\label{GFL45}
    \Big(\frac{\partial}{\partial x_1}+i\frac{\partial}{\partial x_2}\Big)U^\epsilon.
\end{equation}

We have
\begin{equation}\label{GFL46}
    \frac{\partial}{\partial x_1}+i\frac{\partial}{\partial x_2}=e^{i\alpha}\Big(\st\,\frac{\partial}{\partial r}+\ct\,\frac{\partial}{r\partial \theta}+i\frac{\partial}{r\st\,\partial\phi}\Big).
\end{equation}
When we apply this operator to $U^\epsilon=\frac{1}{r}\big(V^\epsilon e_\theta+F^\epsilon e_r\big)$, we use
\begin{equation}\label{GFL47}
\begin{split}
    &\frac{\partial}{\partial r}e_r=0,\quad \frac{\partial}{\partial r}e_\theta=0,\quad \frac{\partial}{\partial\theta} e_r=e_\theta,\quad \frac{\partial}{\partial\theta}e_\theta=-e_r,\\
    &\frac{\partial}{\st\,\partial\phi}e_r=e_\phi,\quad \frac{\partial}{\st\,\partial\phi}e_\theta=\cot\theta\,e_\phi,
    \end{split}
\end{equation}
where we assume that we are on $\us$. The pressure in the Landau solution is
\begin{equation}\label{GFL48}
    p^\epsilon=\frac{4\epsilon}{r^2}\frac{\ct-\epsilon}{(1-\epsilon \ct)^2}.
\end{equation}

We calculate 
\begin{equation}\label{GFL49}
\begin{split}
    &\Big(\frac{\partial}{\partial x_1}+i\frac{\partial}{\partial x_2}\Big)U^\epsilon\\
    &=\frac{e^{i\phi}}{r^2}\Big(\big[-F\st-V\ct+\partial_\theta F\ct\big]e_r+\big[-V\st+\partial_\theta V\ct+F\ct\big]e_\theta\\
    &\quad\qquad\,+\big[iV\cot\theta+iF\big]e_\phi\Big).
    \end{split}
\end{equation}
Let us look at 
\begin{equation}\label{GFL50}
    \xi=\xi^\theta e_\theta+\xi^\alpha e_\phi=e^{i\phi}\big[-V\st+\partial_\theta V\ct+F\ct\big]e_\theta+e^{i\phi}\big[iV\cot\theta+iF\big]e_\phi.
\end{equation}
Recall that
\begin{equation}\label{GFL51}
    \partial_\theta V+V\cot\theta+F=0.
\end{equation}
This gives
\begin{equation}\label{GFL52}
    \xi^\theta=-e^{i\phi}\frac{V}{\st}=-\frac{\partial}{\st\partial\phi}\big[-ie^{i\phi}V\big].
\end{equation}
We also have
\begin{equation}\label{GFL53}
    \xi^\phi=e^{i\phi}\big(iV\cot\theta+iF\big)=-e^{i\phi}\partial_\theta V=\frac{\partial}{\partial\theta}\big[-ie^{i\phi}V\big].
\end{equation}
We see that 
\begin{equation}\label{GFL54}
    \xi=\nabla^\perp\psi,\quad{\rm with}\,\,\psi=-ie^{i\phi}V.
\end{equation}
For $\vartheta=e^{i\phi}\big[-F\st-V\ct+\partial_\theta F\ct\big]$ we can write (using \eqref{GFL51}) 
\begin{equation}\label{GFL55}
    \vartheta=e^{i\phi}\big[\partial_\theta V\st+\partial_\theta F \ct\big].
\end{equation}

We have (recalling the formulae \eqref{GFL38}-\eqref{GFL39})
\begin{equation}\label{GFL56}
    \begin{split}
        &V=-2\epsilon\st-2\eps^2\ct\st+O(\eps^3),\quad F=4\eps\ct+6\eps^2\cos^2\theta-2\eps^2+O(\eps^3),\\
        &\partial_\theta V=-2\eps\ct-2\eps^2(2\cos^2\theta-1)+O(\eps^3),\\
        &\partial_\theta F=-4\eps\st-12\eps^2\ct\st+O(\eps^3),\\
        &\vartheta=-e^{i\phi}6\eps\ct\st+e^{i\phi}\eps^2\big[2\st-16\cos^2\theta\st\big]+O(\eps^3)\\
        &\,\,\,\,=-6\epsilon Z_2+\eps^2\Big[-\frac{16}{5}Z_3-\frac{6}{5}Z_1\Big]+O(\eps^3).
    \end{split}
\end{equation}
In this notation, we can also write
\begin{equation}\label{GFL57}
   \psi=-ie^{i\phi}V=2\eps iZ_1+2\eps^2iZ_2+O(\eps^3).  
\end{equation}

Let us now compute, using the formula \eqref{GFL48},
\begin{equation}\label{GFL58}
\begin{split}
   &\Big(\frac{\partial}{\partial x_1}+i\frac{\partial}{\partial x_2}\Big)p^\epsilon \\
   &=e^{i\alpha}\Big(\st\,\frac{\partial}{\partial r}+\ct\,\frac{\partial}{r\partial \theta}\Big)\Big[\frac{4\epsilon}{r^2}\frac{\ct-\epsilon}{(1-\epsilon \ct)^2}\Big]\\
   &=4\epsilon e^{i\phi}\Big(\st\,\frac{\partial}{\partial r}+\ct\,\frac{\partial}{r\partial \theta}\Big)\frac{\ct-\eps+2\eps\cos^2\theta}{r^2}+O(\eps^3)\\
   &=4\epsilon e^{i\phi}\frac{(-2\ct+2\epsilon-4\eps\cos^2\theta)\st-\ct\,\st-4\eps \cos^2\theta\,\st}{r^2}+O(\eps^2).
   \end{split}
\end{equation}
Therefore recalling \eqref{Pe3}, we have
\begin{equation}\label{GFL59}
\begin{split}
    q= \Big(\frac{\partial}{\partial x_1}+i\frac{\partial}{\partial x_2}\Big)p^\epsilon|_{r=1}&=4 e^{i\phi}\Big[-3\epsilon\ct\,\st-8\eps^2\cos^2\theta\,\st+2\eps^2\st\Big]\\
    &=\epsilon (-12Z_2)+\eps^2\Big(-\frac{32}{5}Z_3+\frac{8}{5}Z_1\Big)+O(\eps^3),
    \end{split}
\end{equation}
and (noting also that ${\rm div}\,\xi=0$ from \eqref{GFL54})
\begin{equation}\label{GFL60}
    \vartheta^\ast=-\vartheta-q=\epsilon \,18Z_2+\epsilon^2\Big(\frac{48}{5}Z_3-\frac{2}{5}Z_1\Big)+O(\eps^3).
\end{equation}

The following is then an eigenvector of $L$ with $\lambda=1$:
\begin{equation}\label{GFL61}
\begin{pmatrix}
    \xi\\\xi'\\\vartheta\\\vartheta^\ast
\end{pmatrix}=\begin{pmatrix}
    \varphi\\\psi\\\varphi'\\\psi'\\\vartheta\\\vartheta^\ast
\end{pmatrix}=\begin{pmatrix}
    0\\\epsilon\,2iZ_1+\epsilon^2\,2iZ_2\\0\\-2\epsilon iZ_1-\epsilon^2\,2iZ_2\\-\epsilon \,6Z_2+\eps^2\big[-\frac{16}{5}Z_3-\frac{6}{5}Z_1\big]\\\epsilon \,18Z_2+\eps^2\big[\frac{48}{5}Z_3-\frac{2}{5}Z_1\big]
\end{pmatrix}+O(\eps^3).
\end{equation}

\subsubsection{Calculation of $K\begin{pmatrix}
    0\\Z_1\\0\\-Z_1\\0\\0
\end{pmatrix}$}
We let $\Xi_1=\begin{pmatrix}
    0\\Z_1\\0\\-Z_1\\0\\0
\end{pmatrix}$, so $\xi=\nabla^\perp\psi=\nabla^\perp Z_1$. Simple calculations show
\begin{equation}\label{GFL62}
    \xi=-ie^{i\phi}e_\theta+\ct \,e^{i\phi}e_\phi,\quad \xi'=-\xi,
\end{equation}
Recalling the formula \eqref{S31e1} for the operator $K$, we have 
\begin{equation}\label{GFL63}
  K\Xi_1=\begin{pmatrix}
      0\\-{\bf V}\nabla\xi-\xi\nabla {\bf V}-F\xi'\\
      0\\-\xi\nabla F+2\xi\cdot{\bf V}
  \end{pmatrix}.  
\end{equation}
We use the identities with covariant derivative on $\us$,
\begin{equation}\label{GFL64}
    \nabla_{e_\theta} (Ve_\theta)=\frac{\partial V}{\partial \theta}\,e_\theta,\quad \nabla_{e_\phi}(V e_\theta)=V\cot\theta\,e_\phi.  
\end{equation}

{\it Step 1: The term ${\bf V}\nabla \xi$.}  We begin with the calculation of the term ${\bf V}\nabla \xi$. 
\begin{equation}\label{GFL65}
    {\bf V}\nabla\xi=V\nabla_{e_\theta}\big(-ie^{i\phi}e_\theta+\ct\,e^{i\phi}e_\phi\big)=-V\st \,e^{i\phi}e_\phi.
\end{equation}
We calculate the corresponding potential $\varphi_1$ and stream function $\psi_1$ as follows:
\begin{equation}\label{GFL66}
    \begin{split}
        &{\bf V}\nabla\xi=\nabla\varphi_1+\nabla^\perp\psi_1\Longrightarrow \Delta\varphi_1={\rm div}\,({\bf V}\nabla\xi),\quad \Delta\psi={\rm curl}\,({\bf V}\nabla\xi),\\
        &{\rm div}\,({\bf V}\nabla\xi)=\frac{1}{\st}\frac{\partial}{\partial\phi}\big(-V\st\,e^{i\phi}\big)=-iVe^{i\phi}\\
        &\qquad\qquad\,\,\,=-e^{i\phi}\big(-2\eps\,\st-2\eps^2\ct\,\st\big)+O(\eps^3),\\
        &{\rm div}\,({\bf V}\nabla\xi)=\epsilon\,2iZ_1+\eps^22iZ_2+O(\eps^3),\\
    \end{split}
\end{equation}
and
\begin{equation}\label{GFL67}
    \begin{split}
        &{\rm curl}\,({\bf V}\nabla\xi)=-e^{i\phi}\big[\partial_\theta V\st+2V\ct\big],\\
        &\partial_\theta V=\partial_\theta\big[-2\eps\,\st-2\eps^2\ct\,\st\big]=-2\eps\,\ct-4\eps^2\cos^2\theta+2\eps^2+O(\eps^3),\\
        &{\rm curl}\,({\bf V}\nabla\xi)=e^{i\phi}\big[\eps\,6\ct\,\st+\eps^28\cos^2\theta\,\st-\eps^22\st\big]+O(\eps^3)\\
        &\qquad\qquad\,\,\,\,=\eps \,6Z_3+\eps^2\Big[\frac{8}{5}Z_3-\frac{2}{5}Z_1\Big]+O(\eps^3). 
    \end{split}
\end{equation}
Using the identities
\begin{equation}\label{GFL68}
    \Delta Z_1=-2Z_1,\quad \Delta Z_2=-6Z_2,\quad \Delta Z_3=-12 Z_3. 
\end{equation}
In view of \eqref{GFL66}-\eqref{GFL68}, we see that
\begin{equation}\label{GFL69}
    {\bf V}\nabla\xi=\nabla\varphi_1+\nabla^\perp\psi_1, \quad \varphi_1=-\eps\,iZ_1-\eps^2\frac{1}{3}iZ_2,\quad \psi_1=-\eps\,Z_2+\eps^2\Big[-\frac{2}{15}Z_3+\frac{1}{5}Z_1\Big].
\end{equation}

{\it Step 2: The term $\xi\nabla {\bf V}$.} We next turn to the calculation of $\xi\nabla {\bf V}$. 
\begin{equation}\label{GFL70}
    \begin{split}
        \xi\nabla {\bf V}&=e^{i\phi}\big(-ie_\theta+\ct\,e_\phi\big)\nabla (Ve_\theta)=-ie^{i\phi}\partial_\theta V\,e_\theta+e^{i\phi}V\frac{\cos^2\theta}{\st}e_\phi,\\
        {\rm div}\,(\xi\nabla {\bf V})&=\frac{1}{\st}\frac{\partial}{\partial\theta}\Big[\st\,(-ie^{i\phi}\partial_\theta V)\Big]+\frac{1}{\st}\partial_\phi\Big(e^{i\phi}V\frac{\cos^2\theta}{\st}\Big)\\
        &=e^{i\phi}(-i)\Big[\epsilon\,2\st+O(\eps^3)\Big]=\eps\,(-2i)Z_1+\eps^2(-10i)Z_2+O(\eps^3).
    \end{split}
\end{equation}
Hence the potential $\varphi_2$ for $\xi\nabla {\bf V}$ is
\begin{equation}\label{GFL71}
    \varphi_2=\epsilon\,i\,Z_1+\eps^2i\,\frac{5}{3}Z_2+O(\eps^3). 
\end{equation}

We can also compute
\begin{equation}\label{GFL72}
    \begin{split}
        {\rm curl}\,(\xi\nabla {\bf V})&=\frac{1}{\st}\frac{\partial}{\partial\theta}\big(e^{i\phi}V\cos^2\theta\big)-\frac{1}{\st}\frac{\partial}{\partial\phi}\big(-ie^{i\phi}\partial_\theta V\big)\\
&=e^{i\phi}\big[6\epsilon\,\ct\,\st+\eps^2(8\cos^2\theta\,\st-2\st)\big]+O(\eps^3)\\
&=6\epsilon\,Z_2+\eps^2\Big(\frac{8}{5}Z_3-\frac{2}{5}Z_1\Big).
        \end{split}
\end{equation}
It follows that the strem function $\psi_2$ for $\xi\nabla {\bf V}$ is
\begin{equation}\label{GFL73}
    \psi_2=-\epsilon\,Z_2+\eps^2\big(-\frac{2}{15}Z_3+\frac{1}{5}Z_1\big).
\end{equation}

{\it Step 3: The term $F\xi'=-F\xi$.} We turn to the calculation of the term $F\xi'=-F\xi$. 
\begin{equation}\label{GFL74}
    \begin{split}
        F\xi'&=-F\xi=iFe^{i\phi}e_\theta-F\ct\,e^{i\phi}e_\phi,\\
        {\rm div}\,(F\xi')&=\frac{1}{\st}\frac{\partial}{\partial\theta}\big(\st\,ie^{i\phi} F\big)+\frac{\partial}{\st\,\partial}\big(-e^{i\phi}F\ct\big)\\
        &=ie^{i\phi}\Big[\partial_\theta F+F\cot\theta-F\cot\theta\Big]\\
        &=\epsilon(-4i)Z_1+\eps^2(-12i)Z_2+O(\eps^3).
    \end{split}
\end{equation}
As a consequence, the potential function $\varphi_3$ for $F\xi'$ is given by
\begin{equation}\label{GFL75}
\varphi_3=\epsilon(2i)Z_1+\eps^2(2i)Z_2+O(\eps^3).
\end{equation}

To get the stream function for the field $F\xi'$, using 
\begin{equation}
    F=4\epsilon\,\ct+6\eps^2\cos^2\theta-2\eps^2+O(\epsilon^3),\quad \partial_\theta F=-4\eps\,\st-12\eps^2\ct\,\st+O(\epsilon^3),
\end{equation}
we calculate
\begin{equation}\label{GFL76}
    \begin{split}
        {\rm curl}\,(F\xi')&=\frac{1}{\st}\frac{\partial}{\partial\theta}\Big(\st\,(-F\ct)e^{i\phi}\Big)-\frac{\partial}{\st\,\partial\phi}\big(iFe^{i\phi}\big)\\
        &=e^{i\phi}\Big[-\partial_\theta F\,\ct+2F\st\Big]\\
        &=e^{i\phi}\Big[12\eps\,\ct\st+24\eps^2\cos^2\theta\,\st-4\eps^2\st\Big]\\
        &=12\eps\,Z_2+\eps^2\frac{24}{5}Z_3+\eps^2\frac{4}{5}Z_1+O(\eps^3). 
    \end{split}
\end{equation}
Hence the stream function $\psi_3$ for $F\xi'$ is given by
\begin{equation}\label{GFL77}
    \psi_3=\epsilon(-2)Z_2+\eps^2\Big[-\frac{2}{5}Z_3-\frac{2}{5}Z_1\Big]. 
\end{equation}

{\it Step 4: Summary of calculation.} Summarizing the above calculations, we conclude that the stream function $\psi$ and potential function $\varphi$ for ${\bf V}\nabla\xi+\xi\nabla {\bf V}+F\xi'$ are given as follows:
\begin{equation}\label{GFL78}
    {\bf V}\nabla\xi+\xi\nabla {\bf V}+F\xi'=\nabla\varphi+\nabla^\perp\psi,\,\, \varphi=\epsilon (2i)Z_1+\epsilon^2\frac{10}{3} iZ_2,\,\, \psi=\epsilon (-4)Z_2+\eps^2\big(-\frac{2}{3}Z_3\big). 
\end{equation}

We now turn to the term
\begin{equation}\label{GFL79}
    \begin{split}
     \vartheta^\ast&=-\xi\nabla F+2\xi\cdot {\bf V} \\
&=-\xi^\theta\frac{\partial F}{\partial\theta}+2\xi^\theta V\\
&=\epsilon^2(-8i)Z_2.
    \end{split}
\end{equation}
In this calculation, we used the formulae
\begin{equation}\label{GFL80}
\begin{split}
   &\xi=-ie^{i\phi}e_\theta+\ct\,e^{i\phi}e_\phi,\,\,   F=4\epsilon\ct+6\eps^2\cos^2\theta-\eps^2+O(\eps^3),\\
   &\partial_\theta F=-4\epsilon\st-12\eps^2\ct\,\st+O(\eps^3),\,\, V=-2\eps\st-2\eps^2\ct\,\st+O(\eps^3).
\end{split}
\end{equation}

We conclude that
\begin{equation}\label{GFL81}
    K\Xi_1=\begin{pmatrix}
      0\\-{\bf V}\nabla\xi-\xi\nabla {\bf V}-F\xi'\\
      0\\-\xi\nabla F+2\xi\cdot{\bf V}
  \end{pmatrix}=\begin{pmatrix}
      0\\0\\\epsilon(-2i)Z_1+\eps^2\big(-\frac{10}{3}i\big)Z_2\\ \epsilon\,4Z_2+\eps^2\frac{2}{3}Z_3\\0\\\eps^2(-8i)Z_2
  \end{pmatrix}+O(\epsilon^3).
\end{equation}

\subsubsection{Calculating $A\Xi_1=P_1K\Xi_1$}
For calculations below we need to express $K\Xi_1$ in terms of the eigenvectors of $L_0$. Let us consider eigenvectors of $L_0$ generated by $Z_k$ (with $\Delta Z_k=-k(k+1)Z_k$), with vanishing stream functions. These will be of the form
$\begin{pmatrix}
    \gamma_1 Z_k\\0\\\gamma_2 Z_k\\0\\\gamma_3Z_k\\\gamma_4Z_k
\end{pmatrix}$ with eigenvalues from $\{k+1,k-1,-k,-k-2\}$, where for a given $k=1,2,3,\dots$ the coefficients $\gamma_1,\gamma_2,\gamma_3, \gamma_4$ are given by table \ref{tab1}.

\begin{table}
    \centering
    \begin{tabular}{|c|c|c|c|c|}
    \hline
         &$\lambda=k+1 $ &$\lambda= k-1 $& $\lambda=-k$ &$\lambda=-k-2$ \\
         \hline
        $\varphi$ & $-\frac{1}{k+1} $& $-\frac{k-2}{k(k+1)}$ & $\frac{1}{k}$ & $\frac{k+3}{k(k+1)}$ \\
      $ \varphi'  $& $1$ &$ \frac{(k-1)(k-2)}{k(k+1)} $ & $1$  &$ \frac{(k+2)(k+3)}{k(k+1)}$ \\
        $\vartheta$ &$1$  & $1$ & $1$ &$ 1$ \\
      $\vartheta^\ast$   &$ -(k+1)$ & $-\frac{k^2+4k-3}{k+1}$  & $k$ & $\frac{k^2-2k-6}{k}$ \\
       $q$  &$0$  &$\frac{4k-2}{k+1}$  &$0$  &$\frac{4k+6}{k}$ \\
       \hline
    \end{tabular}
    \caption{Eigenvectors generated by $Z_k$}
    \label{tab1}
\end{table}

For example, the vector $\begin{pmatrix}
    -\frac{1}{2}Z_1\\0\\Z_1\\0\\Z_1\\-2Z_1
\end{pmatrix}$ is an eigenvector of $L_0$ with eigenvalue $\lambda=k+1=2$ (and $q=0$). As another example, for $k=2$ and $\lambda=k-1$, we have the vector $\begin{pmatrix}
    0\\0\\0\\0\\Z_2\\-3Z_2
\end{pmatrix}$ with the corresponding $q=2Z_2$. Note that a scalar multiple of this vector came up as a part of the first-order approaximation of $(\partial_{x_1}+i\partial_{x_2})U^\epsilon$ in the calculation above. 

To determine $A\Xi_1=P_1(K\Xi_1)$, we first note from the tables \ref{tab1} and \ref{table3} that 
\begin{equation}\label{GFL84}
    P_1(K\Xi_1)=P_1\begin{pmatrix}
      0\\0\\\eps^2\big(-\frac{10}{3}i\big)Z_2\\ 0\\0\\\eps^2(-8i)Z_2
  \end{pmatrix},
\end{equation}
as the other components have no projection in the eigenspace $E_1$ of $L_0$. 

For $Z_2$, the table \ref{tab1} becomes
table \ref{table3}.

\begin{table}
    \centering
    \begin{tabular}{ccccc}
         &$\lambda=3$  &$\lambda=1$  &$\lambda=-2$  &$\lambda=-4$ \\
         \hline
       $\varphi$  &$-\frac{1}{3}$  & $0$ & $\frac{1}{2}$ &$\frac{5}{6}$ \\[0.5ex] 
       $\varphi'$  & $1$ & $0$ &$1$  &$\frac{10}{3}$ \\
       $\vartheta$  & $1$ & $1$ &$1$  &$1$ \\
      $\vartheta^\ast$   &$-3$  &$-3$  &$2$  &$-3$ \\
    \end{tabular}
    \caption{Eigenvectors generated by $Z_2$}
    \label{table3}
\end{table}
We compute
\begin{equation}\label{GFL85}
    \begin{pmatrix}
        0\\-\frac{10}{3}\\0\\-8
    \end{pmatrix}=-\frac{74}{35}\begin{pmatrix}
        -\frac{1}{3}\\1\\1\\-3
    \end{pmatrix}+\frac{18}{5}\begin{pmatrix}
        0\\0\\1\\-3
    \end{pmatrix}-\frac{8}{5}\begin{pmatrix}
        \frac{1}{2}\\1\\1\\2
    \end{pmatrix}+\frac{4}{35}\begin{pmatrix}
        \frac{5}{6}\\\frac{10}{3}\\1\\-3
    \end{pmatrix}.
\end{equation}
 Therefore 
 \begin{equation}\label{GFL86}
     P_1(K\Xi_1)=\eps^2\frac{18}{5}i\begin{pmatrix}
         0\\0\\0\\Z_2\\-3Z_2
     \end{pmatrix}. 
 \end{equation}

\subsubsection{Calculating $B(I-\Lambda)^{-1}C\,\Xi_1$}
 We now turn to the term $B(I-\Lambda)^{-1}C\,\Xi_1$. Since $B$ is of order $O(\epsilon)$, it suffices to compute $C\Xi_1$ up to order $O(\epsilon^2)$. From the definition, see \eqref{GFL3}-\eqref{GFL4}, we get that
 \begin{equation}\label{GFL87}
     C\,\Xi_1=(I-P_1)K\Xi_1=K\Xi_1+O(\epsilon^2).
 \end{equation}
 Hence the vector that we need to calculate is 
 \begin{equation}\label{GFL88}
  B(I-\Lambda)^{-1}C\,\Xi_1=P_1K(I-\Lambda)^{-1}K\Xi_1=P_1K(I-\Lambda)^{-1}\begin{pmatrix}
      0\\0\\\epsilon(-2i)Z_1\\\epsilon\,4Z_2\\0\\0
  \end{pmatrix},\quad {\rm mod}\,\,O(\eps^3).   \end{equation}

{\it Step 1: Computing $(I-\Lambda)^{-1}\begin{pmatrix}
0\\0\\\epsilon(-2i)Z_1\\\epsilon\,4Z_2\\0\\0
  \end{pmatrix}$.} 
  Let us start by computing $(I-\Lambda)^{-1}\begin{pmatrix}
    0\\0\\Z_1\\0\\0\\0
\end{pmatrix}$ using Table \ref{tab1} for the eigenvectors of $L_0$ generated by $Z_1$.
\begin{table}
    \centering
    \begin{tabular}{c|cccc}
         &$\lambda=2$  &$\lambda=0$  & $\lambda=-1$ & $\lambda=-3$\\
         \hline
        $\varphi$ &$-\frac{1}{2}$  &$\frac{1}{2}$  & $1$ &$2$ \\
      $\varphi'$   & $1$ &$0$  & $1$ &$6$ \\
     $\vartheta$    & $1$ & $1$ &$1$  &$1$ \\
     $\vartheta^\ast$    & $-2$ &$-1$  &$1$  &$-7$ \\
    \end{tabular}
    \caption{Eigenvectors generated by $Z_1$}
    \label{table4}
\end{table}

  We compute 
  \begin{equation}\label{GFL89}
      \begin{pmatrix}
          0\\1\\0\\0
      \end{pmatrix}=\frac{4}{15}\begin{pmatrix}
          -\frac{1}{2}\\1\\1\\-2
      \end{pmatrix}-\frac{2}{3}\begin{pmatrix}
          \frac{1}{2}\\0\\1\\-1
      \end{pmatrix}+\frac{1}{3}\begin{pmatrix}
          1\\1\\1\\1
      \end{pmatrix}+\frac{1}{15}\begin{pmatrix}
          2\\6\\1\\-7
      \end{pmatrix}.
  \end{equation}
  Thus
  \begin{equation}\label{GFL90}
  \begin{split}
      (I-\Lambda)^{-1}\begin{pmatrix}
          0\\0\\1\\0\\0\\0
      \end{pmatrix}Z_2=&-1\cdot\frac{4}{15}\begin{pmatrix}
          -\frac{1}{2}\\0\\1\\0\\1\\-2
      \end{pmatrix}Z_2-\frac{2}{3}\begin{pmatrix}
          \frac{1}{2}\\0\\0\\0\\1\\-1
      \end{pmatrix}Z_2+\frac{1}{2}\cdot\frac{1}{3}\begin{pmatrix}
          1\\0\\1\\0\\1\\1
      \end{pmatrix}Z_2\\
      &+\frac{1}{4}\cdot\frac{1}{15}\begin{pmatrix}
          2\\0\\6\\0\\1\\-7
      \end{pmatrix}Z_2=\begin{pmatrix}
          0\\0\\0\\0\\-\frac{3}{4}\\\frac{5}{4}
      \end{pmatrix}Z_2.
      \end{split}
  \end{equation}

We next compute $(I-\Lambda)^{-1}\begin{pmatrix}
      0\\0\\0\\Z_2\\0\\0
  \end{pmatrix}$. 
Using the identity
\begin{equation}\label{GFL91}
  \begin{pmatrix}
      0\\0\\0\\Z_2\\0\\0
  \end{pmatrix}=-\frac{1}{5}\begin{pmatrix}
      0\\Z_2\\0\\-2Z_2\\0\\0
  \end{pmatrix} +\frac{1}{5}\begin{pmatrix}
      0\\Z_2\\0\\3Z_2\\0\\0
  \end{pmatrix}, 
\end{equation}
where the first vector belongs to $E_2$ and the second belongs to $E_{-3}$, we see that
\begin{equation}\label{GFL92}
    (I-\Lambda)^{-1}\begin{pmatrix}
      0\\0\\0\\Z_2\\0\\0
  \end{pmatrix}=-1\cdot\big(-\frac{1}{5}\big)\begin{pmatrix}
      0\\Z_2\\0\\-2Z_2\\0\\0
  \end{pmatrix} +\frac{1}{4}\cdot\frac{1}{5}\begin{pmatrix}
      0\\Z_2\\0\\3Z_2\\0\\0
  \end{pmatrix}=\begin{pmatrix}
      0\\\frac{1}{4}Z_2\\0\\-\frac{1}{4}Z_2\\0\\0
  \end{pmatrix}.
\end{equation}

Summarizing the results \eqref{GFL90} and \eqref{GFL92}, we see that
\begin{equation}\label{GFL93}
    (I-\Lambda)^{-1}\begin{pmatrix}
      0\\0\\\epsilon(-2i)Z_1\\\epsilon\,4Z_2\\0\\0
  \end{pmatrix}=\begin{pmatrix}
       0\\\epsilon Z_2\\0\\-\epsilon Z_2\\\epsilon\frac{3i}{2}Z_1\\\epsilon\frac{-5i}{2}Z_1 
    \end{pmatrix}.
\end{equation}

{\it Step 2: computing $P_1K\begin{pmatrix}
       0\\\epsilon Z_2\\0\\-\epsilon Z_2\\\epsilon\frac{3i}{2}Z_1\\\epsilon\frac{-5i}{2}Z_1 
    \end{pmatrix}$. } 
We first calculate, using formula \eqref{S31e1} and \eqref{GFL80},
\begin{equation}\label{GFL94}
    K\begin{pmatrix}
        0\\0\\0\\0\\\frac{3i\epsilon}{2}Z_1\\-\frac{5i\eps}{2}Z_1
    \end{pmatrix}=\begin{pmatrix}
        0\\0\\0\\0\\0\\-V\vartheta+(2\vartheta-\vartheta^\ast-q)F 
    \end{pmatrix}=\begin{pmatrix}
        0\\0\\0\\0\\0\\\eps^221 iZ_2
    \end{pmatrix},
\end{equation}
where we used the computation
\begin{equation}\label{GFL95}
\begin{split}
   & -v\nabla\vartheta=-(-2\eps\,\st)\frac{\partial}{\partial\theta}\Big(\frac{3}{2}i\epsilon \st\,e^{i\phi}\Big)=\eps^23i\st\,\ct\,e^{i\phi}=\eps^23iZ_2,\\
&(2\vartheta-\vartheta^\ast-q)F=4\eps\,\ct\,\frac{9}{2}i\eps\,\st e^{i\phi}=\eps^218iZ_2. 
    \end{split}
\end{equation}

Using the identity
\begin{equation}\label{GFL96}
    \begin{pmatrix}
        0\\0\\0\\1
    \end{pmatrix}=\frac{3}{35}\begin{pmatrix}
        -\frac{1}{3}\\1\\1\\-3
    \end{pmatrix}-\frac{1}{5}\begin{pmatrix}
        0\\0\\1\\-3
    \end{pmatrix}+\frac{1}{5}\begin{pmatrix}
        \frac{1}{2}\\1\\1\\2
    \end{pmatrix}-\frac{3}{35}\begin{pmatrix}
        \frac{5}{6}\\\frac{10}{3}\\7\\-3
    \end{pmatrix},
\end{equation}
it follows from Table \ref{table3} and simple calculation that
\begin{equation}\label{GFL97}
    P_1K\begin{pmatrix}
        0\\0\\0\\0\\\frac{3i\epsilon}{2}Z_1\\-\frac{5i\eps}{2}Z_1
    \end{pmatrix}=\epsilon^2i\Big(-\frac{21}{5}\Big)\begin{pmatrix}
        0\\0\\0\\0\\Z_2\\-3Z_2
    \end{pmatrix}.
\end{equation}

Next we calculate $P_1K\begin{pmatrix}
    0\\Z_2\\0\\-Z_2\\0\\0
\end{pmatrix}$. Recall the formula \eqref{S31e1} for $K$ and \eqref{GFL80} for $V, F$. We have
\begin{equation}\label{GFL98}
    K\begin{pmatrix}
        \xi\\\xi'\\0\\0
    \end{pmatrix}=\begin{pmatrix}
        0\\-{\bf V}\nabla \xi-\xi\nabla {\bf V}-F\xi'\\0\\-\xi\nabla F+2\xi\cdot {\bf V}
    \end{pmatrix},\quad \xi=\nabla^\perp Z_2,\quad \xi'=-\nabla^\perp Z_2.
\end{equation}

We consider first the term ${\bf V}\nabla\xi$: 
\begin{equation}\label{GFL99}
    \begin{split}
\bV\nabla\xi&=V\nabla_{e_\theta}\Big(\xi^\theta\, e_\theta+\xi^\phi e_\phi\Big)=V\frac{\partial\xi^\theta}{\partial\theta}e_{\theta}+V\frac{\partial\xi^\phi}{\partial\theta}e_\phi\\
        &=(-2\eps\,\st)i\st \,e^{i\phi}e_\theta+(-2\epsilon\,\st)(-4\ct\,\st)e^{i\phi}e_\phi\\
        &=e^{i\phi}\Big[\eps (-2i)\sin^2\theta e_\phi+\epsilon\,8\ct\,\sin^2\theta \,e_\phi\Big].
    \end{split}
\end{equation}
As before, we wish to determine the stream and potential functions of the field ${\bf V}\nabla \xi$, and hence we calculate 
\begin{equation}\label{GFL100}
    \begin{split}
        {\rm div}\,({\bf V}\nabla\xi)&=e^{i\phi}\frac{-2i\eps}{\st}\frac{\partial}{\partial\theta}\Big(\st\,\sin^2\theta\Big)+\frac{8\eps\partial}{\st\partial\phi}\Big(e^{i\phi}\ct\,\sin^2\theta\Big)\\
        &=e^{i\phi}(-6i\eps)\st\,\ct+e^{i\phi}(8i\eps)\st\,\ct=2i\eps Z_2,\\
 {\rm curl}\,({\bf V}\nabla\xi)&=\frac{e^{i\phi}}{\st}\frac{\partial}{\partial\theta}\Big(\st\,8\eps\,\ct\,\sin^2\theta\Big)-i\frac{(-2\eps i)\sin^2\theta}{\st}e^{i\phi}\\
        &=\eps \,e^{i\phi}\big(32\cos^2\theta\,\st-10\st\big)\\
        &=\epsilon\Big[\frac{32}{5}Z_3-\frac{18}{5}Z_1\Big]. 
    \end{split}
\end{equation}

We next turn to the term $\xi\nabla \bV$:
\begin{equation}\label{GFL101}
\xi\nabla(Ve_\theta)=\xi^\theta\partial_\theta V e_\theta+\xi^\phi V \cot\theta\,e_\phi, \quad \xi=\xi^\theta e_\theta+\xi^\phi e_\phi=-\frac{\partial Z_2}{\st\,\partial\phi} e_\theta+\frac{\partial Z_2}{\partial\theta} e_\phi. 
\end{equation}
We have
\begin{equation}\label{GFL102}
    \begin{split}
        {\rm div}\,(\xi\nabla Ve_\theta)&=(-2\eps){\rm div}\,\big(\ct\,\nabla^\perp Z_2\big)=(-2\eps)\nabla (\ct)\cdot \nabla^\perp Z_2\\
        &=(-2\epsilon)(-\st)\Big(-\frac{\partial Z_2}{\st\,\partial\phi}\Big)=(-2\eps i)Z_2,
    \end{split}
\end{equation}
and 
\begin{equation}\label{GFL103}
    \begin{split}
        {\rm curl}\,(\xi\nabla {\bf V})&=(-2\eps)\,{\rm curl}\,\big(\ct\,\nabla^\perp Z_2\big)\\
        &=(-2\eps)(\nabla \ct)\cdot (\nabla Z_2)+(-2\eps)\ct\,(\Delta Z_2)\\
        &=(-2\eps)(-\st\,e_\theta)\cdot\Big(\frac{\partial Z_2}{\partial\theta}e_\theta+\frac{\partial Z_2}{\st\partial\phi}e_\phi\Big)+(-2\eps)\ct\,(-6Z_2)\\
        &=(-2\eps \,e^{i\phi})\Big[-\st(2\cos^2\theta-1)-6\cos^2\theta\,\st\Big]\\
        &=\eps\Big[\frac{16}{5}Z_3+\frac{6}{5}Z_1\Big].
    \end{split}
\end{equation}

We now turn to the term
\begin{equation}\label{GFL104}
    F\xi'=F(-\nabla^\perp Z_2)=-F\nabla^\perp Z_2.
\end{equation}
We have 
\begin{equation}\label{GFL105}
    \begin{split}
        {\rm div}\,(F\xi')&=-{\rm div}\,\big(F\nabla^\perp Z_2\big)=-\frac{\partial F}{\partial\theta}\Big(-\frac{\partial Z_2}{\st\,\partial\phi}\Big)\\
        &=\eps \big(-4i\st\,\ct\,e^{i\phi}\big)=\eps (-4i)Z_2,
    \end{split}
\end{equation}
and
\begin{equation}\label{GFL106}
    \begin{split}
        {\rm curl}\,(F\xi')&=-{\rm curl}\,\big(F\nabla^\perp Z_2\big)=-\nabla F\cdot\nabla Z_2-F\Delta Z_2\\
        &=-\frac{\partial F}{\partial\theta}\frac{\partial Z_2}{\partial\theta}+6FZ_2=4\eps\,\st(2\cos^2\theta-1)+6FZ_2\\
        &=\epsilon (32)\cos^2\theta\,\st-\eps \,4\st=\eps \Big(\frac{32}{5}(5\cos^2\theta-1)+\frac{32}{5}-4\Big)\st\\
        &=\epsilon\,\frac{32}{5}Z_3+\epsilon\,\frac{12}{5}Z_1.
    \end{split}
\end{equation}

We summarize the calculations in Table \ref{table6}.

\begin{table}
    \centering
    \begin{tabular}{c|c|c|c|c|}
         & ${\bf V}\nabla \xi$ &$\xi\nabla\bV$  & $F\xi'$ & $-\bV\nabla\xi-\xi\nabla\bV-F\xi'$\\
         \hline
       div  & $\eps\,2i\,Z_2$  & $\epsilon (-2i)Z_2$ & $\epsilon (-4i)Z_2$  & $\epsilon (4i)Z_2$ \\
     curl & $\eps\big[\frac{32}{5}Z_3-\frac{18}{5}Z_1\big]$   &$\eps\big[\frac{16}{5}Z_3+\frac{6}{5}Z_1\big]$  &$\eps\big[\frac{32}{5}Z_3+\frac{12}{5}Z_1\big]$    &$-\eps\,16Z_3$ \\
    \end{tabular}
    \caption{Summary of Calculations}
    \label{table6}
\end{table}

Recalling the definition \eqref{S31e1} for the operator $K$, it remains to evaluate $-\xi\nabla F+2\xi\cdot(Ve_\theta)$ (with $\xi=\nabla^\perp Z_2$). We have
\begin{equation}\label{GFL107}
    -\xi\nabla F+2\xi\cdot(Ve_\theta)=-\xi^\theta \partial_\theta F+2\xi^\theta V=\xi^\theta (-\partial_\theta F+2V)=O(\epsilon^2).
\end{equation}
In the above calculation we also used the formulae \eqref{GFL56}. 

We see that the contribution to $P_1K\begin{pmatrix}
    \xi\\\xi'\\0\\0
\end{pmatrix}$ comes from the term $\eps \,4iZ_2$ in Table \ref{table6}. The function $Z_3$ in the table does not generate any eigenvector of $L_0$ with $\lambda=1$. 

The potential $\varphi$ corresponding to ${\rm div}=\epsilon(4i)Z_2$ is
\begin{equation}\label{GFL108}
    \varphi=-\frac{1}{6}\epsilon\,4iZ_2=\epsilon \big(-\frac{2}{3}i\big)Z_2.
\end{equation}
Given that the stream function parts does not play a role in this contribution, we can assume they are zero. We need to decompose the vector $\begin{pmatrix}
    0\\0\\\epsilon\big(-\frac{2}{3}i\big)Z_2\\0\\0\\0
\end{pmatrix}$ into expansion of eigenvectors of $L_0$. We use Table \ref{table3} and the identity
\begin{equation}\label{GFL109}
    \begin{pmatrix}
        0\\1\\0\\0
    \end{pmatrix}=\frac{3}{7}\begin{pmatrix}
        -\frac{1}{3}\\1\\1\\-3
    \end{pmatrix}-\frac{3}{5}\begin{pmatrix}
        0\\0\\1\\-3
    \end{pmatrix}+\frac{6}{35}\begin{pmatrix}
        \frac{5}{6}\\\frac{10}{3}\\1\\-3
    \end{pmatrix}.
\end{equation}
As a consequence, we see that
\begin{equation}\label{GFL110}
    P_1\begin{pmatrix}
    0\\0\\\epsilon\big(-\frac{2}{3}i\big)Z_2\\0\\0\\0
\end{pmatrix}=\epsilon \,\frac{2}{5}i\begin{pmatrix}
    0\\0\\0\\0\\Z_2\\-3Z_2
\end{pmatrix}.
\end{equation}
Theorefore,
\begin{equation}\label{GFL111}
    P_1K\begin{pmatrix}
        0\\\epsilon Z_2\\0\\-\epsilon Z_2\\0\\0
    \end{pmatrix}=\epsilon^2\,\frac{2}{5}i\begin{pmatrix}
    0\\0\\0\\0\\Z_2\\-3Z_2
\end{pmatrix}.
\end{equation}
We conclude that (recall \eqref{GFL41} for the definition of $\Xi_1'$)
    \begin{equation}\label{GFL112}
      B(I-\Lambda)^{-1}C\,\Xi_1=\epsilon^2i\big(-\frac{19}{5}\big)\Xi_1',  
    \end{equation}
and hence
\begin{equation}\label{GFL113}
    A\,\Xi_1+B(I-\Lambda)^{-1}C\,\Xi_1=P_1K\,\Xi_1+B(I-\Lambda)^{-1}C\,\Xi_1=\epsilon^2\big(-\frac{1}{5}i\Xi_1'\big).
\end{equation}

\subsubsection{Calculations on the second eigenvector $\Xi_1'$} Recall from \eqref{GFL61} that the vector
\begin{equation}\label{GFL114}
    \Xi=\begin{pmatrix}
0\\\epsilon\,2iZ_1+\epsilon^2\,2iZ_2\\0\\-2iZ_1-\epsilon^2\,2iZ_2\\-\epsilon \,6Z_2+\eps^2\big[-\frac{16}{5}Z_3-\frac{6}{5}Z_1\big]\\\epsilon \,18Z_2+\eps^2\big[\frac{48}{5}Z_3-\frac{2}{5}Z_1\big]
\end{pmatrix}+O(\eps^3).
\end{equation}
(calculated up to $O(\eps^3)$) is an eigenvalue of $L$ with $\lambda=1$. The projection $P_1\Xi$ is given by
\begin{equation}\label{GFL115}
    P_1\Xi=\epsilon\,2i\Xi_1-\epsilon\,6\Xi_1'+O(\eps^3),
\end{equation}
as the terms with $\eps^2$ do not contribute to the projection to $E$ (see Table \ref{tab1}). 

The equation $L\Xi=\Xi$ can be expressed in the equation in $E$,
\begin{equation}\label{GFL116}
   \big(A+B(I-\Lambda)^{-1}C\big)\big(2i\Xi_1-6\Xi_1'\big)=O(\epsilon^3). 
\end{equation}
Denote
\begin{equation}\label{GFL117}
    \mathcal{O}=A+B(I-\Lambda)^{-1}C.
\end{equation}
From \eqref{GFL116}, we have
\begin{equation}\label{GFL118}
    \mathcal{O}\,\Xi_1'=\frac{i}{3}\mathcal{O}\Xi_1+\mathcal{O}(\epsilon^3),
\end{equation}
and we previously calculated (see \eqref{GFL113})
\begin{equation}\label{GFL119}
   \mathcal{O}\,\Xi_1=\epsilon^2\Big(-\frac{i}{5}\Xi_1'\Big).
\end{equation}
Hence
\begin{equation}\label{GFL120}
    \mathcal{O}\,\Xi_1'=\eps^2\,\frac{1}{15}\Xi_1',
\end{equation}
and we see that the matrix of $\mathcal{O}$ in the basis $\Xi_1, \Xi_1'$ is 
\begin{equation}\label{GFL121}
    \eps^2\begin{pmatrix}
        0&0\\
        -\frac{i}{5}&\frac{1}{15}
    \end{pmatrix}+\mathcal{O}(\epsilon^3)
\end{equation}
which has eigenvalue $0$ and $\frac{1}{15}\eps^2$ (up to order $O(\eps^3)$). 

Therefore, recalling \eqref{GFL11} and \eqref{GFL117}, we conclude that $L\sim I+A+B(I-\Lambda)^{-1}C$ has eigenvalues $\lambda=1+O(\epsilon^3)$ and $\lambda=1+\frac{1}{15}\eps^2+O(\epsilon^3)$ when restricted to $X^m$ with $m=1$, and by \eqref{GFL121}, the eigenvectors form an angle $\gtrsim1$. We remark that the eigenvalue $\lambda=1+O(\epsilon^3)$ is actually $\lambda=1$ since $\Xi$ from \eqref{GFL114} is an eigenvector.

\subsection{The mode $m=2$}
Here the situation is simpler, as the unperturbed eigenspace is one-dimensional. Let us denote by $\Xi_2$ the corresponding eigenvector. It is of the form
\begin{equation}\label{GFL122}
    \Xi_2=\begin{pmatrix}
        0\\0\\Z_2^2\\-3Z_2^2
    \end{pmatrix},\quad Z_2^2=\sin^2\theta\,e^{2i\phi}. 
\end{equation}
A calculation similar to those above gives 
\begin{equation}\label{GFL123}
    K\,\Xi_2=\epsilon^2\frac{4}{15}\Xi_2+O(\epsilon^3). 
\end{equation}

\bibliographystyle{plain} 
\bibliography{bibliography}

@article {SverakKorolev,
    AUTHOR = {Korolev, A. and \v{S}ver\'{a}k, V.},
     TITLE = {On the large-distance asymptotics of steady state solutions of
              the {N}avier-{S}tokes equations in 3{D} exterior domains},
   JOURNAL = {Ann. Inst. H. Poincar\'{e} C Anal. Non Lin\'{e}aire},
  FJOURNAL = {Annales de l'Institut Henri Poincar\'{e} C. Analyse Non
              Lin\'{e}aire},
    VOLUME = {28},
      YEAR = {2011},
    NUMBER = {2},
     PAGES = {303--313},
      ISSN = {0294-1449,1873-1430},
   MRCLASS = {35Q30 (35B40 76D05)},
  MRNUMBER = {2784073},
MRREVIEWER = {Lorenzo\ Brandolese},
       DOI = {10.1016/j.anihpc.2011.01.003},
       URL = {https://doi.org/10.1016/j.anihpc.2011.01.003},
}

@article {Batchelor,
    AUTHOR = {Batchelor, G.K.},
     TITLE = {An introduction to fluid dynamics},
   JOURNAL = {Cambridge University Press},
  
      YEAR = {1974},
    }

@article {Lifschitz,
    AUTHOR = {Landau, L.D. and Lifshitz, E.M.},
     TITLE = {Fluid mechanics},
   JOURNAL = {Butterworth-Heinemann; 2nd edition 
},
  
      YEAR = {1987},
    }

@article{amick1988leray,
  author  = {Amick, C. J.},
  title   = {On {L}eray's problem of steady {N}avier--{S}tokes flow past a body in the plane},
  journal = {Acta Math.},
  volume  = {161},
  pages   = {71--130},
  year    = {1988}
}

@article{babenko1973stationary,
  author  = {Babenko, K. I.},
  title   = {On stationary solutions of the problem of flow past a body of a viscous incompressible fluid},
  journal = {Mat. Sb.},
  volume  = {91},
  number  = {133},
  pages   = {3--25},
  year    = {1973},
  note    = {English Translation: \textit{Math. USSR Sbornik}, 20, 1973, 1--25}
}

@article{cannone2004smooth,
  author  = {Cannone, M. and Karch, G.},
  title   = {Smooth or singular solutions to the {N}avier--{S}tokes system?},
  journal = {J. Differential Equations},
  volume  = {197},
  number  = {2},
  pages   = {247--274},
  year    = {2004}
}

@article{deuring2000asymptotic,
  author  = {Deuring, P. and Galdi, G. P.},
  title   = {On the asymptotic behavior of physically reasonable solutions to the stationary {N}avier--{S}tokes system in three-dimensional exterior domains with zero velocity at infinity},
  journal = {J. Math. Fluid Mech.},
  volume  = {2},
  number  = {4},
  pages   = {353--364},
  year    = {2000}
}

@article{finn1965exterior,
  author  = {Finn, R.},
  title   = {On the exterior stationary problem for the {N}avier--{S}tokes equations, and associated perturbation problems},
  journal = {Arch. Rational Mech. Anal.},
  volume  = {19},
  pages   = {363--406},
  year    = {1965}
}

@book{galdi1994introduction,
  author    = {Galdi, G. P.},
  title     = {An Introduction to the Mathematical Theory of the {N}avier--{S}tokes Equations: Volumes I and II},
  publisher = {Springer},
  year      = {1994}
}

@article{landau1944new,
  author  = {Landau, L. D.},
  title   = {A new exact solution of the {N}avier--{S}tokes equations},
  journal = {Dokl. Akad. Nauk SSSR},
  volume  = {43},
  pages   = {299},
  year    = {1944}
}

@article{leray1933etude,
  author  = {Leray, J.},
  title   = {\'{E}tude de diverses \'{e}quations int\'{e}grales non lin\'{e}aires et de quelques probl\`{e}mes que pose l'hydrodynamique},
  journal = {J. Math. Pures Appl.},
  volume  = {12},
  pages   = {1--82},
  year    = {1933}
}

@article{nazarov2000steady,
  author  = {Nazarov, S. A. and Pileckas, K.},
  title   = {On steady {S}tokes and {N}avier--{S}tokes problems with zero velocity at infinity in a three-dimensional exterior domain},
  journal = {J. Math. Kyoto Univ.},
  volume  = {40},
  number  = {3},
  pages   = {475--492},
  year    = {2000}
}

@article{tian1998one,
  author  = {Tian, G. and Xin, Z.},
  title   = {One-point singular solutions to the {N}avier--{S}tokes equations},
  journal = {Topol. Methods Nonlinear Anal.},
  volume  = {11},
  number  = {1},
  pages   = {135--145},
  year    = {1998}
}

@article{Yud67,
  author  = {Yudovich, V. I.},
  title   = {An example of loss of stability and generation of a secondary flow in a closed vessel},
  journal = {Math. USSR-Sb.},
  volume  = {3},
  number  = {4},
  pages   = {519--533},
  year    = {1967},
  doi     = {10.1070/SM1967v003n04ABEH002764}
}

@book{tsai2018lectures,
  author    = {Tsai, T.P.},
  title     = {Lectures on {N}avier--{S}tokes Equations},
  series    = {Graduate Studies in Mathematics},
  volume    = {192},
  publisher = {American Mathematical Society},
  year      = {2018},
  isbn      = {978-1470430962}
}

@article{jia2018asymptotics,
  author  = {Jia, H. and \v{S}ver\'{a}k, V.},
  title   = {Asymptotics of stationary {N}avier--{S}tokes equations in higher dimensions},
  journal = {Acta Mathematica Sinica, English Series},
  volume  = {34},
  number  = {4},
  pages   = {598--611},
  year    = {2018}
}

@article{sverak2011landau,
  author  = {\v{S}ver\'{a}k, V.},
  title   = {On {L}andau's solutions of the {N}avier--{S}tokes equations},
  journal = {Journal of Mathematical Sciences},
  volume  = {179},
  pages   = {208--228},
  year    = {2011},
  doi     = {10.1007/s10958-011-0590-5}
}

@article{guillod2015steady,
  author  = {Guillod, J.},
  title   = {Steady solutions of the {N}avier--{S}tokes equations in the plane},
  journal = {arXiv preprint arXiv:1511.03938},
  year    = {2015},
  eprint  = {1511.03938},
  archivePrefix = {arXiv},
  primaryClass  = {math.AP},
  doi     = {10.48550/arXiv.1511.03938}
}

@article{korobkov2023stationary_ren,
  author  = {Korobkov, M. and Ren, X.},
  title   = {Stationary Solutions to the {N}avier--{S}tokes System in an Exterior Plane Domain: 90 Years of Search, Mysteries and Insights},
  journal = {J. Math. Fluid Mech.},
  volume  = {25},
  pages   = {55},
  year    = {2023},
  doi     = {10.1007/s00021-023-00792-w}
}

@article{korobkov2015solution,
  author  = {Korobkov, M. and Pileckas, K. and Russo, R.},
  title   = {Solution of {L}eray's problem for stationary {N}avier--{S}tokes equations in plane and axially symmetric spatial domains},
  journal = {Annals of Mathematics},
  volume  = {181},
  number  = {2},
  pages   = {769--807},
  year    = {2015}
}

@book{constantin1988navier,
  author    = {Constantin, P. and Foia\c{s}, C.},
  title     = {{N}avier--{S}tokes Equations},
  publisher = {University of Chicago Press},
  year      = {1988},
  pages     = {190}
}

@article{farwig2009asymptotic,
  author  = {Farwig, R. and Hishida, T.},
  title   = {Asymptotic profiles of steady {S}tokes and {N}avier--{S}tokes flows around a rotating obstacle},
  journal = {Ann. Univ. Ferrara},
  volume  = {55},
  pages   = {263--277},
  year    = {2009},
  doi     = {10.1007/s11565-009-0072-6}
}

\end{document}